\documentclass[final]{siamart0216}
\usepackage{amsmath,amsfonts}
\usepackage{enumerate}
\usepackage{general}
\usepackage{hyperref}
\usepackage[bold]{hhtensor}
\usepackage[latin1]{inputenc}
\usepackage{nicefrac}
\usepackage{paralist}
\usepackage{parskip}
\usepackage[font=footnotesize]{subcaption}
\usepackage{tikz}
\usepackage{pgfplots,pgfplotstable}



\newcommand{\TheTitle}{A Hybrid High-Order method for the Cahn--Hilliard problem in mixed form}
\newcommand{\TheAuthors}{F. Chave, D. A. Di Pietro, F. Marche, F. Pigeonneau}

\ifpdf
\hypersetup{
  pdftitle={\TheTitle},
  pdfauthor={\TheAuthors}
}
\fi



\newcommand{\dt}{\delta_t}
\newcommand{\bdf}{\delta_t^{(2)}}

\newcommand{\UT}[1][k]{\underline{U}_T^{#1}}
\newcommand{\Uh}[1][k]{\underline{U}_h^{#1}}
\newcommand{\UhO}[1][k]{\underline{U}_{h,0}^{#1}}

\newcommand{\IT}[1][k]{\underline{I}_T^{#1}}
\newcommand{\Ih}[1][k]{\underline{I}_h^{#1}}


\newcommand{\uchn}[1][n]{\underline{c}_h^{#1}}
\newcommand{\uwhn}[1][n]{\underline{w}_h^{#1}}

\newcommand{\uhchn}[1][n]{\widehat{\underline{c}}_h^{#1}}
\newcommand{\uhwhn}[1][n]{\widehat{\underline{w}}_h^{#1}}

\newcommand{\hchn}[1][n]{\widehat{c}_h^{#1}}
\newcommand{\hwhn}[1][n]{\widehat{w}_h^{#1}}

\newcommand{\uech}[1][n]{\underline{e}_{c,h}^{#1}}
\newcommand{\uewh}[1][n]{\underline{e}_{w,h}^{#1}}

\newcommand{\ech}[1][n]{e_{c,h}^{#1}}
\newcommand{\ewh}[1][n]{e_{w,h}^{#1}}

\newcommand{\uphi}[1][h]{\underline{\varphi}_{#1}}
\newcommand{\upsi}[1][h]{\underline{\psi}_{#1}}

\newcommand{\uv}{\underline{v}_T}
\newcommand{\uvh}{\underline{v}_h}

\newcommand{\uz}{\underline{z}_T}
\newcommand{\uzh}{\underline{z}_h}
\newcommand{\uhzh}{\underline{\widehat{z}}_h}
\newcommand{\hzh}{\widehat{z}_h}

\newcommand{\pT}[1][k+1]{p_T^{#1}}

\newcommand{\Lh}[1][k]{L_h^{#1}}
\newcommand{\uLh}[1][k]{\underline{L}_h^{#1}}

\newcommand{\G}{\mathcal{G}}
\newcommand{\Gh}[1][k]{\mathcal{G}_h^{#1}}
\newcommand{\uGh}[1][k]{\underline{\mathcal{G}}_h^{#1}}

\newcommand{\tF}{t_{\rm F}}

\newcommand{\lproj}[2][h]{\pi_{#1}^{#2}}

\newcommand{\jump}[2][F]{[#2]_{#1}}
\newcommand{\avg}[2][F]{\{#2\}_{#1}}


\makeatletter
\def\thm@space@setup{%
  \thm@preskip=\parskip \thm@postskip=0pt
}
\makeatother


\headers{A HHO method for the Cahn--Hilliard problem}{\TheAuthors}

\title{{\TheTitle}\thanks{This work was partially supported by Saint-Gobain Recherche (contract UM 150095). D. Di Pietro also acknowledges the partial support of Agence Nationale de la Recherche project HHOMM (ANR-15-CE40-0005).}}

\author{
  Florent Chave\thanks{University of Montpellier, Institut Montpelli\'{e}rain Alexander Grothendieck, 34095 Montpellier, France (\email{florent.chave@outlook.fr}, \email{daniele.di-pietro@umontpellier.fr}, \email{fabien.marche@umontpellier.fr})}
  \and 
  Daniele A. Di Pietro\footnotemark[2]
  \and
  Fabien Marche\footnotemark[2]~\thanks{INRIA Lemon team, 860 rue Saint-Priest 34095 Montpellier, France}
  \and
  Franck Pigeonneau\thanks{Surface du Verre et Interfaces, UMR 125 CNRS/Saint-Gobain, 93303 Aubervilliers Cedex, France (\email{franck.pigeonneau@saint-gobain.com})}
}

\begin{document}

\maketitle

\begin{abstract}
  In this work we develop a fully implicit Hybrid High-Order algorithm for the Cahn--Hilliard problem in mixed form.
  The space discretization hinges on local reconstruction operators from hybrid polynomial unknowns at elements and faces.
  The proposed method has several advantageous features:
  \begin{inparaenum}[(i)]
  \item It supports fairly general meshes possibly containing polyhedral elements and nonmatching interfaces;
  \item it allows arbitrary approximation orders;
  \item it has a moderate computational cost thanks to the possibility of locally eliminating element-based unknowns by static condensation.
  \end{inparaenum}
  We perform a detailed stability and convergence study, proving optimal convergence rates in energy-like norms.
  Numerical validation is also provided using some of the most common tests in the literature.
  \smallskip
  \\
  \noindent\emph{2010 Mathematics Subject Classification:} 65N08, 65N30, 65N12
  \\
  \noindent\emph{Keywords:} Hybrid High-Order, Cahn--Hilliard equation, phase separation, mixed formulation, discrete functional analysis, polyhedral meshes
\end{abstract}

\section{Introduction}

Let $\Omega\subset\Real^d$, $d\in\{2,3\}$, denote a bounded connected convex polyhedral domain with boundary $\partial\Omega$ and outward normal $\normal$, and let $\tF>0$.
The Cahn--Hilliard problem, originally introduced in~\cite{Cahn.Hilliard:58,Cahn:61} to model phase separation in a binary alloy, consists in finding the order-parameter $c:\Omega\times\lbrack 0,\tF\rbrack\to\Real$ and chemical potential $w:\Omega\times\lbrack 0,\tF\rbrack\to\Real$ such that
\begin{subequations}
  \label{eq:strong}
  \begin{alignat}{2}
    \label{eq:strong:1}
    d_t c - \LAPL w &= 0 &\qquad&\text{in $\Omega\times(0,\tF\rbrack$},
    \\
    \label{eq:strong:2}
    w &= \Phi'(c) - \gamma^2\LAPL c &\qquad&\text{in $\Omega\times(0,\tF\rbrack$},
    \\
    \label{eq:strong:ic}
    c(0) &= c_0 &\qquad&\text{in $\Omega$},
    \\
    \label{eq:strong:bc}
    \partial_{\normal}c=\partial_{\normal}w&=0 &\qquad&\text{on $\partial\Omega\times (0,\tF\rbrack$},
  \end{alignat}
\end{subequations}
where $c_0\in H^2(\Omega)\cap L^2_0(\Omega)$ such that $\partial_{\normal} c_0=0$ on $\partial\Omega$ denotes the initial datum, $\gamma>0$ the interface parameter (usually taking small values), and $\Phi$ the free-energy such that
\begin{equation}
  \label{eq:Phi}
  \Phi(c)\eqbydef\frac14(1-c^2)^2.
\end{equation}
Relevant extensions of problem~\eqref{eq:strong} (not considered here) include the introduction of a flow which requires, in particular, to add a convective term in~\eqref{eq:strong:1}; cf., e.g.,~\cite{Jacqmin:99,Badalassi.Ceniceros.ea:03,Boyer:02,Boyer.Chupin.ea:04,Kim.Kang.ea:04,Kay.Styles.ea:09}.

The discretization of the Cahn--Hilliard equation~\eqref{eq:strong} has been considered in several works.
Different aspects of standard finite element schemes have been studied, e.g., in~\cite{Elliott.French.ea:89,Du.Nicolaides:91,Copetti.Elliott:92}; cf. also the references therein.
Mixed finite elements are considered in~\cite{Feng.Prohl:05}.
In~\cite{Wells.Kuhl.ea:06}, the authors study a nonconforming method based on $C^0$ shape functions for the fourth-order primal problem obtained by plugging~\eqref{eq:strong:2} into~\eqref{eq:strong:1}.
Discontinuous Galerkin (dG) methods have also received extensive attention.
We can cite here~\cite{Xia.Xu.ea:07}, where a local dG method is proposed for a Cahn--Hilliard system modelling multi-component alloys, and a stability analysis is carried out;
\cite{Feng.Karakashian:07}, where optimal error estimates are proved for a dG discretization of the Cahn--Hilliard problem in primal form;
\cite{Kay.Styles.ea:09}, which contains optimal error estimates for a dG method based on the mixed formulation of the problem including a convection term;
~\cite{Guo.Xu:14}, where a multi-grid approach is proposed for the solution of the systems of algebraic equations arising from a dG discretization of the Cahn--Hilliard equation.
In all of the above references, standard meshes are considered.
General polygonal meshes in dimension $d=2$, on the other hand, are supported by the recently proposed $C^1$-conforming Virtual Element (VE) method of~\cite{Antonietti.Beirao-da-Veiga.ea:15} for the problem in primal formulation; cf. also~\cite{Beirao-da-Veiga.Manzini:14} for VE methods with arbitrary regularity.
Therein, the convergence analysis is carried out under the assumption that the discrete order-parameter satisfies a $C^0(L^\infty)$-like a priori bound.

In this work, we develop and analyze a fully implicit Hybrid High-Order (HHO) algorithm for problem~\eqref{eq:strong} where the space discretization is based on the HHO($k+1)$ variation proposed in~\cite{Cockburn.Di-Pietro.ea:15} of the method of~\cite{Di-Pietro.Ern.ea:14}.
The method hinges on hybrid degrees of freedom (DOFs) located at mesh elements and faces that are polynomials of degree $(k+1)$ and $k$, respectively.
The nonlinear term in~\eqref{eq:strong:2} is discretized by means of element unknowns only.
For the second-order diffusive operators in~\eqref{eq:strong:1} and~\eqref{eq:strong:2}, on the other hand, we rely on two key ingredients devised locally inside each element:
\begin{inparaenum}[(i)]
\item A potential reconstruction obtained from the solution of (small) Neumann problems and
\item a stabilization term penalizing the lowest-order part of the difference between element- and face-based unknowns.
\end{inparaenum}
See also~\cite{Cockburn.Gopalakrishnan.ea:09,Wang.Ye:13,Lipnikov.Manzini:14} for related methods for second-order linear diffusion operators, each displaying a set of distinctive features.
The global discrete problem is then obtained by a standard element-by-element assembly procedure.
When using a first-order (Newton-like) algorithm to solve the resulting system of nonlinear algebraic equations, element-based unknowns can be statically condensed.
As a result, the only globally coupled unknowns in the linear subproblems are discontinuous polynomials of degree $k$ on the mesh skeleton for both the order-parameter and the chemical potential.
With a backward Euler scheme to march in time, the $C^0(H^1)$-like error on the order-parameter and the $L^2(H^1)$-like error on the chemical potential are proved to optimally converge as $(h^{k+1}+\tau)$ (with $h$ and $\tau$ denoting, respectively, the spatial and temporal mesh sizes) provided the solution has sufficient regularity.

The proposed method has several advantageous features:
\begin{inparaenum}[(i)]
\item It supports general meshes possibly including polyhedral elements and nonmatching interfaces (resulting, e.g., from nonconforming mesh refinement);
\item it allows one to increase the spatial approximation order to accelerate convergence in the presence of (locally) regular solutions;
\item it is (relatively) inexpensive. When $d=2$, e.g., the number of globally coupled spatial unknowns for our method scales as $2\card{\Fh}(k+1)$ (with $\card{\Fh}$ denoting the number of mesh faces) as opposed to $\card{\Th}(k+3)(k+2)$ (with $\card{\Th}$ denoting the number of mesh elements) for a mixed dG method delivering the same order of convergence (i.e., based on broken polynomials of degree $k+1$).
\end{inparaenum}
Additionally, thanks to the underlying fully discontinuous polynomial spaces, the proposed method can accomodate abrupt variations of the unknowns in the vicinity of the interface between phases.

Our analysis adapts the techniques originally developed in~\cite{Kay.Styles.ea:09} in the context of dG methods.
Therein, the treatment of the nonlinear term in~\eqref{eq:strong:2} hinges on $C^0$-in-time a priori estimates for various norms and seminorms of the discrete order-parameter.
Instrumental in proving these estimates are discrete functional analysis results, including discrete versions of Agmon's and Gagliardo--Nirenberg--Poincar\'{e}'s inequalities for broken polynomial functions on quasi-uniform matching simplicial meshes.
Adapting these tools to hybrid polynomial spaces on general meshes entails several new ideas.
A first key point consists in defining appropriate discrete counterparts of the Laplace and Green's operators.
To this end, we rely on a suitably tailored $L^2$-like hybrid inner product which guarantees stability estimates for the former and optimal approximation properties for the latter.
Another key point consists in replacing the standard nodal interpolator used in the proofs of \cite[Lemmas~2.2 and 2.3]{Kay.Styles.ea:09} by the $L^2$-orthogonal projector which, unlike the former, is naturally defined for meshes containing polyhedral elements.
We show that this replacement is possible thanks to the $W^{s,p}$-stability and approximation properties of the $L^2$-orthogonal projector on broken polynomial spaces recently presented in a unified setting in~\cite{Di-Pietro.Droniou:15}; cf. also the references therein for previous results on this subject.

The material is organized as follows:
In Section~\ref{sec:disc} we introduce the notation for space and time meshes and recall some key results on broken polynomial spaces;
in Section~\ref{sec:hho} we introduce hybrid polynomial spaces and local reconstructions, and state the discrete problem;
in Section~\ref{sec:stab} we carry out the stability analysis of the method, while the convergence analysis is detailed in Section~\ref{sec:err.anal};
Section~\ref{sec:num.tests} contains an extensive numerical validation of the proposed algorithm;
finally, in Appendix~\ref{sec:proofs} we give proofs of the discrete functional analysis results used to derive stability bounds and error estimates.


\section{Discrete setting}\label{sec:disc}

In this section we introduce the discrete setting and recall some basic results on broken polynomial spaces.

\subsection{Space and time meshes}\label{sec:setting}

We recall here the notion of admissible spatial mesh sequence from~\cite[Chapter~1]{Di-Pietro.Ern:12}.
For the sake of simplicity, we will systematically use the term polyhedral also when $d=2$.
Denote by ${\cal H}\subset \Real_*^+ $ a countable set of spatial meshsizes having $0$ as its unique accumulation point.
We consider $h$-refined mesh sequences $(\Th)_{h \in {\cal H}}$ where, for all $ h \in {\cal H} $, $\Th$ is a finite collection of nonempty disjoint open polyhedral elements $T$ of boundary $\partial T$ such that $\closure{\Omega}=\bigcup_{T\in\Th}\closure{T}$ and $h=\max_{T\in\Th} h_T$ with $h_T$ standing for the diameter of the element $T$.

A face $F$ is defined as a planar closed connected subset of $\closure{\Omega}$ with positive $ (d{-}1) $-dimensional Hausdorff measure and such that%
\begin{inparaenum}[(i)]
\item either there exist $T_1,T_2\in\Th $ such that $F\subset\partial T_1\cap\partial T_2$ and $F$ is called an interface or 
\item there exists $T\in\Th$ such that $F\subset\partial T \cap\partial\Omega$ and $F$ is called a boundary face.
\end{inparaenum}
Mesh faces are collected in the set $\Fh$, and the diameter of a face $F\in\Fh$ is denoted by $h_F$.
For all $T\in\Th$, $\Fh[T]\eqbydef\{F\in\Fh\st F\subset\partial T \}$ denotes the set of faces lying on $\partial T$ and, for all $F\in\Fh[T]$, $\normal_{TF}$ is the unit normal to $F$ pointing out of $T$.
Symmetrically, for all $F\in\Fh$, we denote by $\Th[F]$ the set of one (if $F\in\Fhb$) or two (if $F\in\Fhi$) elements sharing $F$.

\begin{assumption}[Admissible spatial mesh sequence]\label{ass:mesh}
  We assume that, for all $h\in{\cal H}$, $\Th$ admits a matching simplicial submesh $\fTh$ and there exists a real number $\varrho>0$ independent of $h$ such that, for all $h\in{\cal H}$, the following properties hold:%
  \begin{inparaenum}[(i)]
  \item {\em Shape regularity:} For all simplex $S\in\fTh$ of diameter $h_S$ and inradius $r_S$, $\varrho h_S\le r_S$;
  \item {\em contact-regularity:} For all $T\in\Th$, and all $S\in\fTh$ such that $S\subset T$, $\varrho h_T \le h_S$.
  \end{inparaenum}
\end{assumption}

To discretize in time, we consider a uniform partition $(t^n)_{0\le n\le N}$ of the time interval $[0,\tF]$ with $t^0=0$, $t^N=\tF$ and $t^n-t^{n-1}=\tau$ for all $1\le n\le N$ (the analysis can be adapted to nonuniform partitions).
For any sufficiently regular function of time $\varphi$ taking values in a vector space $V$, we denote by $\varphi^n\in V$ its value at discrete time $t^n$, and we introduce the backward differencing operator $\dt$ such that, for all $1\le n\le N$,
\begin{equation}
  \label{eq:ddt}
  \dt\varphi^n\eqbydef\frac{\varphi^n-\varphi^{n-1}}{\tau}\in V.
\end{equation}

In what follows, we often abbreviate by $a\lesssim b$ the inequality $a\le Cb$ with $a$ and $b$ positive real numbers and $C>0$ generic constant independent of both the meshsize $h$ and the time step $\tau$ (named constants are used in the statements for the sake of easy consultation).
Also, for a subset $X\subset\closure{\Omega}$, we denote by $(\cdot,\cdot)_X$ and $\norm[X]{{\cdot}}$ the usual $L^2(X)$-inner product and norm, with the convention that we omit the index if $X=\Omega$. The same notation is used for the vector-valued space $L^2(X)^d$.

\subsection{Basic results on broken polynomial spaces}

The proposed method is based on local polynomial spaces on mesh elements and faces.
Let an integer $l\ge 0$ be fixed. Let $U$ be a subset of $\Real^d$, $H_U$ the affine space spanned by $U$, $d_U$ its dimension, and assume that $U$ has a non-empty interior in $H_U$.
We denote by $\Poly{l}(U)$ the space spanned by $d_U$-variate polynomials on $H_U$ of total degree $l$, and by $\lproj[U]{l}$ the $L^2$-orthogonal projector onto this space.
In the following sections, the set $U$ will represent a mesh element or face.
The space of broken polynomial functions on $\Th$ of degree $l$ is denoted by $\Poly{l}(\Th)$, and $\lproj{l}$ is the corresponding $L^2$-orthogonal projector.

We next recall some functional analysis results on polynomial spaces.
The following discrete trace and inverse inequalities are proved in~\cite[Chapter~1]{Di-Pietro.Ern:12} (cf. in particular Lemmas~1.44 and 1.46):
There is $C>0$ independent of $h$ such that, for all $T\in\Th$, and all $\forall v\in\Poly{l}(T)$,
\begin{equation}
  \label{eq:trace.disc}
  \norm[F]{v} \le C h_F^{- \frac12} \norm[T]{v}\qquad\forall F\in\Fh[T],
\end{equation}
and
\begin{equation}
  \label{eq:inv}
  \norm[T]{\GRAD v}\le C h_T^{-1}\norm[T]{v}.
\end{equation}
We will also need the following local direct and reverse Lebesgue embeddings (cf.~\cite[Lemma~5.1]{Di-Pietro.Droniou:15}):
There is $C>0$ independent of $h$ such that, for all $T\in\Th$, all $q,p\in[1,+\infty]$, 
\begin{equation}\label{eq:leb.emb}
  \forall v\in\Poly{l}(T),\qquad
  C^{-1}\norm[L^q(T)]{v}\le h_T^{\frac{d}{q}-\frac{d}{p}}\norm[L^p(T)]{v}\le C\norm[L^q(T)]{v}.
\end{equation}

The proof of the following results for the local $L^2$-orthogonal projector can be found in~\cite[Appendix A.2]{Di-Pietro.Droniou:15}.
For an open set $U$ of $\Real^d$, $s\in\Natural$ and $p\in [1,+\infty]$, we define the seminorm $\seminorm[W^{s,p}(U)]{{\cdot}}$ as follows: For all $v\in W^{s,p}(U)$, $$\seminorm[W^{s,p}(U)]{v}\eqbydef
\sum_{\vec{\alpha}\in\Natural^d,\,|\vec{\alpha}|_{\ell^1}=s}\norm[L^p(U)]{\partial^{\vec{\alpha}} v},$$ where $|\vec{\alpha}|_{\ell^1}\eqbydef\alpha_1+\cdots+\alpha_d$ and $\partial^{\vec{\alpha}}=\partial_1^{\alpha_1}\cdots\partial_d^{\alpha_d}$.
For $s=0$, we recover the usual Lebesgue spaces $L^p(U)$.
The $L^2$-orthogonal projector is $W^{s,p}$-stable and has optimal $W^{s,p}$-approximation properties:
There is $C>0$ independent of $h$ such that, for all $T\in\Th$, all $s\in\{0,\ldots,l+1\}$, all $p\in[1,+\infty]$, and all $v\in W^{s,p}(T)$, it holds,
\begin{equation}\label{eq:lproj.stab}
  \seminorm[W^{s,p}(T)]{\lproj[T]{l}v}\le C \seminorm[W^{s,p}(T)]{v},
\end{equation}
and, for all $m\in\{0,\ldots,s\}$,
\begin{equation}\label{eq:lproj.approx}
  \seminorm[W^{m,p}(T)]{v-\lproj[T]{l} v}
  + h_T^{\frac1p}\seminorm[{W^{m,p}(\Fh[T])}]{v-\lproj[T]{l} v}
  \le C h_T^{s-m}\seminorm[W^{s,p}(T)]{v},
\end{equation}
where $W^{m,p}(\Fh[T])$ denotes the set of functions that belong to $W^{m,p}(F)$ for all $F\in\Fh[T]$.
Finally, there is $C>0$ independent of $h$ such that it holds, for all $F\in\Fh$,
\begin{equation}\label{eq:lprojF.approx}
  \forall v\in H^1(F),\qquad
  \norm[F]{v-\lproj[F]{l}v}\le C h\seminorm[H^1(F)]{v}.
\end{equation}%

In the proofs of Lemmas~\ref{lem:agmon} and~\ref{lem:gnp} below, we will make use of the following global inverse inequalities, which require mesh quasi-uniformity.

\begin{proposition}[Global inverse inequalities for Lebesgue norms of broken polynomials]
  In addition to Assumption~\ref{ass:mesh}, we assume that the mesh is quasi-uniform, i.e.,
  \begin{equation}\label{eq:qu}
    \forall T\in\Th,\qquad \varrho h\le h_T.
  \end{equation}
  Then, for all polynomial degree $l\ge 0$ and all $1\le p\le q\le+\infty$, it holds
  \begin{equation}\label{eq:glob.inv}
    \forall w_h\in\Poly{l}(\Th),\qquad
    \norm[L^q(\Omega)]{w_h}\le C h^{\frac{d}{q} - \frac{d}{p}}\norm[L^p(\Omega)]{w_h},
  \end{equation}
  with real number $C>0$ independent of $h$.
\end{proposition}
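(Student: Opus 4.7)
My plan is to reduce the global inequality to the local one stated in~\eqref{eq:leb.emb}, using the quasi-uniformity assumption~\eqref{eq:qu} to convert element diameters $h_T$ into the global meshsize $h$, and then to aggregate the local bounds via a discrete $\ell^p$--$\ell^q$ embedding.

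First, I would apply~\eqref{eq:leb.emb} element by element: for every $T \in \Th$, the restriction $w_{h|T}$ lies in $\Poly{l}(T)$, so
\begin{equation*}
  \norm[L^q(T)]{w_h} \le C\, h_T^{\frac{d}{q}-\frac{d}{p}} \norm[L^p(T)]{w_h}.
\end{equation*}
Since $p \le q$ the exponent $\tfrac{d}{q}-\tfrac{d}{p}$ is non-positive, and quasi-uniformity $\varrho h \le h_T$ from~\eqref{eq:qu} yields $h_T^{\frac{d}{q}-\frac{d}{p}} \le \varrho^{\frac{d}{q}-\frac{d}{p}}\, h^{\frac{d}{q}-\frac{d}{p}}$. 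Thus, absorbing $\varrho$ into the constant,
\begin{equation*}
  \norm[L^q(T)]{w_h} \le C\, h^{\frac{d}{q}-\frac{d}{p}} \norm[L^p(T)]{w_h} \qquad \forall T \in \Th.
\end{equation*}

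Next, I would sum these local bounds over $T\in\Th$. The case $q = +\infty$ is immediate: taking the maximum over $T$ on the left and using $\norm[L^p(T)]{w_h}\le\norm[L^p(\Omega)]{w_h}$ on the right gives the result. For $q<+\infty$, I would raise both sides to the power $q$ and sum:
\begin{equation*}
  \norm[L^q(\Omega)]{w_h}^q
  = \sum_{T\in\Th}\norm[L^q(T)]{w_h}^q
  \le C^q h^{q(\frac{d}{q}-\frac{d}{p})}\sum_{T\in\Th}\norm[L^p(T)]{w_h}^q.
\end{equation*}
To close the estimate, I would use the elementary discrete embedding $\ell^{p}\hookrightarrow\ell^{q}$ (for $p\le q$) applied to the sequence $\bigl(\norm[L^p(T)]{w_h}\bigr)_{T\in\Th}$, namely $\sum_{T} a_T^q \le \bigl(\sum_{T} a_T^p\bigr)^{q/p}$ for non-negative $a_T$, which follows from the monotonicity of $\ell^r$-norms in $r$. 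This gives
\begin{equation*}
  \sum_{T\in\Th}\norm[L^p(T)]{w_h}^q \le \biggl(\sum_{T\in\Th}\norm[L^p(T)]{w_h}^p\biggr)^{q/p} = \norm[L^p(\Omega)]{w_h}^q,
\end{equation*}
and taking the $q$-th root concludes the proof.

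There is no real obstacle here: the only subtle point is making sure one uses quasi-uniformity in the correct direction (the exponent of $h_T$ in~\eqref{eq:leb.emb} is non-positive under $p\le q$, so the lower bound $h_T\ge\varrho h$ is the one that matters), and recognising that the $\ell^p$--$\ell^q$ aggregation goes in the favourable direction precisely because $q\ge p$. This last step is what makes quasi-uniformity essential: without it one could still bound $\norm[L^q(\Omega)]{w_h}$ by a weighted sum of $\norm[L^p(T)]{w_h}$ terms with element-dependent powers of $h_T$, but could not factor out a single global $h^{\frac{d}{q}-\frac{d}{p}}$.
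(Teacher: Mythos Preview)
Your proof is correct but differs from the paper's route. The paper first proves the endpoint case $q=+\infty$ (essentially as you do) and then, for finite $q$, reduces to it via the H\"older-type interpolation
\[
  \norm[L^q(\Omega)]{w_h}^q \le \norm[L^\infty(\Omega)]{w_h}^{q-p}\norm[L^p(\Omega)]{w_h}^p,
\]
plugging in the already-established $L^\infty$ bound. You instead stay at the element level throughout and aggregate via the discrete embedding $\ell^p\hookrightarrow\ell^q$ for $p\le q$. Your argument is arguably more uniform (it treats all finite $q$ in one shot without passing through $q=+\infty$) and makes the role of the $\ell^p$--$\ell^q$ monotonicity explicit; the paper's interpolation trick has the virtue of isolating a single endpoint estimate from which everything else follows. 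Both approaches use quasi-uniformity in exactly the same place and for the same reason you identify.
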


\begin{proof}
  Let $w_h\in\Poly{l}(\Th)$.
  We start by proving that, for all $p\in[1,+\infty]$,
  \begin{equation}\label{eq:glob.inv.Linfty}
    \forall w_h\in\Poly{l}(\Th),\qquad
    \norm[L^\infty(\Omega)]{w_h}\lesssim h^{-\frac{d}{p}}\norm[L^p(\Omega)]{w_h},
  \end{equation}
  which corresponds to~\eqref{eq:glob.inv} with $q=+\infty$.
  By the local reverse Lebesgue embeddings~\eqref{eq:leb.emb}, there is $C>0$ independent of $h$ such that
  $$
  \forall T\in\Th,\qquad\norm[L^\infty(T)]{w_h}\le C h_T^{-\frac{d}{p}}\norm[L^p(T)]{w_h}\le C \rho^{-\frac{d}{p}} h^{-\frac{d}{p}}\norm[L^p(\Omega)]{w_h},
  $$
  where we have used the mesh quasi-uniformity assumption~\eqref{eq:qu} to conclude.
  Inequality~\eqref{eq:glob.inv.Linfty} follows observing that $\norm[L^\infty(\Omega)]{w_h}=\max_{T\in\Th}\norm[L^\infty(T)]{w_h}$.
  Let us now turn to the case $1\le q<+\infty$. We have
  $$
  \norm[L^q(\Omega)]{w_h}^q
  \le\norm[L^\infty(\Omega)]{w_h}^{q-p}\norm[L^p(\Omega)]{w_h}^p
  \lesssim \left(h^{\frac{d}{q}-\frac{d}{p}}\norm[L^p(\Omega)]{w_h}\right)^{q},
  $$
  where the conclusion follows using~\eqref{eq:glob.inv.Linfty}.
\end{proof}


\section{The Hybrid High-Order method}\label{sec:hho}

In this section we define hybrid spaces and state the discrete problem.

\subsection{Hybrid spaces}\label{sec:setting:spaces}

The discretization of the diffusion operator hinges on the HHO method of~\cite{Cockburn.Di-Pietro.ea:15} using polynomials of degree $(k+1)$ inside elements and $k$ on mesh faces (cf. Remark~\ref{rem:need.k+1} for further insight on this choice).
The global discrete space is defined as
\begin{equation}\label{eq:Uh}
  \Uh\eqbydef\left(
  \bigtimes_{T\in\Th}\Poly{k+1}(T)
  \right)\times\left(
  \bigtimes_{F\in\Fh}\Poly{k}(F)
  \right).
\end{equation}
The restriction of $\Uh$ to an element $T\in\Th$ is denoted by $\UT$.
For a generic collection of DOFs in $\Uh$, we use the underlined notation $\uvh=((v_T)_{T\in\Th},(v_F)_{F\in\Fh})$ and, for all $T\in\Th$, we denote by $\uv=(v_T,(v_F)_{F\in\Fh[T]})$ its restriction to $\UT$.
Also, to keep the notation compact, we denote by $v_h$ (no underline) the function in $\Poly{k+1}(\Th)$ such that
\begin{equation*}\label{eq:vh}
  \restrto{v_h}{T}=v_T\qquad\forall T\in\Th.
\end{equation*}
In what follows, we will also need the zero-average subspace 
$$
\UhO\eqbydef\left\{ \uvh\in\Uh\st (v_h,1)=0 \right\}.
$$
The interpolator $\Ih:H^1(\Omega)\to\Uh$ is such that, for all $v\in H^1(\Omega)$,
\begin{equation}\label{eq:Ih}
  \Ih v\eqbydef ( (\lproj[T]{k+1}v)_{T\in\Th}, (\lproj[F]{k}v)_{F\in\Fh}).
\end{equation}
We define on $\Uh$ the seminorm $\norm[1,h]{{\cdot}}$ such that
\begin{equation}
  \label{eq:norm1h}
  \norm[1,h]{\uvh}^2\eqbydef\norm{\GRADh v_h}^{2} + \seminorm[1,h]{\uvh}^2,\qquad
  \seminorm[1,h]{\uvh}^2\eqbydef s_{1,h}(\uvh,\uvh),
\end{equation}
where $\GRADh$ denotes the usual broken gradient on $H^1(\Th)$ and the stabilization bilinear form $s_{1,h}$ on $\Uh\times\Uh$ is such that
\begin{equation}\label{eq:s1h}
  s_{1,h}(\uvh,\uzh)\eqbydef\sum_{T\in\Th}\sum_{F\in\Fh[T]}h_F^{-1}(\lproj[F]{k}(v_F-v_T),\lproj[F]{k}(z_F-z_T))_F.
\end{equation}
Using the stability and approximation properties of the $L^2$-orthogonal projector expressed by~\eqref{eq:lproj.stab}--\eqref{eq:lproj.approx}, one can prove that $\Ih$ is $H^1$-stable:
\begin{equation}\label{eq:Ih.stab}
  \forall v\in H^1(\Omega),\qquad
  \norm[1,h]{\Ih v}\lesssim\norm[H^1(\Omega)]{v}.
\end{equation}

The following Friedrichs' inequalities can be proved using the arguments of~\cite[Lemma~7.2]{Di-Pietro.Droniou:15}, where element DOFs of degree $k$ are considered (cf. also~\cite{Buffa.Ortner:09,Di-Pietro.Ern:10} for related results using dG norms):
For all $r\in\lbrack 1,+\infty)$ if $d=2$, all $r\in\lbrack 1,6\rbrack$ if $d=3$,
\begin{equation}
  \label{eq:friedrichs}
  \forall\uvh\in\UhO,\qquad
  \norm[L^r(\Omega)]{v_h}\lesssim\norm[1,h]{\uvh}.
\end{equation}
The case $r=2$ corresponds to Poincar\'{e}'s inequality.
Finally, to close this section, we prove that $\norm[1,h]{{\cdot}}$ defines a norm on $\UhO$.

\begin{proposition}[{Norm $\norm[1,h]{{\cdot}}$}]\label{prop:norm1h}
  The map $\norm[1,h]{{\cdot}}$ defines a norm on $\UhO$.
\end{proposition}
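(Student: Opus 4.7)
The plan is to proceed by showing that $\norm[1,h]{{\cdot}}$ is a seminorm and then verifying definiteness on $\UhO$. Homogeneity and the triangle inequality for $\norm[1,h]{{\cdot}}$ are immediate consequences of the analogous properties for $\norm{\GRADh{\cdot}}$ and of $s_{1,h}$ being a symmetric positive semi-definite bilinear form (as a sum of squared $L^2$-norms); so only definiteness requires work.

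Let me sketch the definiteness step. Fix $\uvh \in \UhO$ and assume $\norm[1,h]{\uvh} = 0$. Then first $\norm{\GRADh v_h} = 0$ forces $v_T \equiv c_T$ to be constant on each $T \in \Th$. Second, the vanishing of $s_{1,h}(\uvh,\uvh)$ gives $\lproj[F]{k}(v_F - v_T) = 0$ for every $T \in \Th$ and every $F \in \Fh[T]$. Here I will use the crucial observation that since $v_T$ is the constant $c_T$, its trace on $F$ lies in $\Poly{0}(F) \subset \Poly{k}(F)$, while $v_F \in \Poly{k}(F)$ by construction; hence $v_F - v_T \in \Poly{k}(F)$, so the projector $\lproj[F]{k}$ acts as the identity and we obtain the pointwise equality $v_F = c_T$ on $F$.

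The next step exploits connectivity. For any interface $F \in \Fhi$ shared by $T_1, T_2 \in \Th$, applying the previous equality from both sides gives $c_{T_1} = v_F = c_{T_2}$. Since $\Omega$ is connected and the mesh $\Th$ covers $\Omega$, any two elements can be joined by a chain of elements sharing interfaces, so all $c_T$ coincide with a common constant $c \in \Real$, i.e., $v_h \equiv c$ on $\Omega$. The zero-average constraint $\uvh \in \UhO$ then yields $c\,|\Omega| = (v_h,1) = 0$, whence $c = 0$. Consequently $v_T = 0$ for every $T \in \Th$ and $v_F = 0$ for every $F \in \Fh$, so $\uvh = 0$.

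There is no real obstacle here; the only subtle point is the elementary but essential remark that on a constant $v_T$ the $L^2$-projection $\lproj[F]{k}$ is the identity, which is what allows the jump condition $v_F = v_T|_F$ to propagate the constant across interfaces. Everything else is a standard connectivity-plus-mean-value argument, so the proof will be essentially as brief as the sketch above.
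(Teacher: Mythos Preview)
Your argument is correct, but it follows a genuinely different route from the paper's proof. The paper does not unpack the structure of $\norm[1,h]{{\cdot}}$ at all: it simply invokes the discrete Poincar\'e inequality~\eqref{eq:friedrichs} with $r=2$, which immediately gives $\norm{v_h}\lesssim\norm[1,h]{\uvh}=0$, hence $v_T\equiv 0$ for every $T$; then the stabilization term reduces to $\sum_{T}\sum_{F\in\Fh[T]}h_F^{-1}\norm[F]{v_F}^2=0$, forcing all face unknowns to vanish. Your proof, by contrast, is self-contained: you avoid the (nontrivial) discrete Friedrichs machinery and instead argue directly that piecewise constancy plus the vanishing stabilization propagates a single constant across face-adjacent elements, which the zero-mean constraint then kills. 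The paper's approach is shorter and leverages a result it needs anyway; yours is more elementary and makes the role of mesh connectivity explicit (a mild implicit assumption that the face-adjacency graph of $\Th$ is connected, which holds for admissible meshes on a connected $\Omega$).
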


\begin{proof}
  We only have to show that $\norm[1,h]{\uvh}=0\implies\uvh=\underline{0}$.
  By~\eqref{eq:friedrichs}, $\norm[1,h]{\uvh}\implies v_h\equiv 0$, i.e., $v_T\equiv 0$ for all $T\in\Th$.
  Plugging this result into the definition~\eqref{eq:norm1h} of $\norm[1,h]{{\cdot}}$, we get $\sum_{T\in\Th}\sum_{F\in\Fh[T]} h_F^{-1}\norm[F]{v_F}^2=0$, which implies in turn $v_F\equiv 0$ for all $F\in\Fh$.
\end{proof}

\subsection{Diffusive bilinear form and discrete problem}\label{sec:setting:diffusion}

For all $T\in\Th$, we define the potential reconstruction operator $\pT:\UT\to\Poly{k+1}(T)$ such that, for all $\uv\in\UT$, $\pT\uv$ is the unique solution of the following Neumann problem:
\begin{equation}\label{eq:pT}
  (\GRAD\pT\uv,\GRAD z)_T
  = -( v_T,\LAPL z)_T + \sum_{F\in\Fh[T]}(v_F, \GRAD z\SCAL\normal_{TF})_F\qquad
  \forall z\in\Poly{k+1}(T),
\end{equation}
with closure condition $(\pT\uv,1)_T=(v_T,1)_T$.
It can be proved that, for all $v\in H^1(T)$, denoting by $\IT$ the restriction of the reduction map $\Ih$ defined by~\eqref{eq:Ih} to $H^1(T)\to\UT$,
\begin{equation}\label{eq:pT.IT.ell.proj}
  (\GRAD(\pT\IT v - v),\GRAD z)_T = 0\qquad\forall z\in\Poly{k+1}(T),
\end{equation}
which expresses the fact that $(\pT\circ\IT)$ is the elliptic projector onto $\Poly{k+1}(T)$ (and, as such, has optimal approximation properties in $\Poly{k+1}(T)$).
The diffusive bilinear form $a_h$ on $\Uh\times\Uh$ is obtained by element-wise assembly setting
\begin{equation}\label{eq:ah}
  a_h(\uvh,\uzh)\eqbydef\sum_{T\in\Th}(\GRAD\pT\uv,\GRAD\pT\uz)_T + s_{1,h}(\uvh,\uzh),
\end{equation}
with stabilization bilinear form $s_{1,h}$ defined by~\eqref{eq:s1h}.
Denoting by $\norm[a,h]{{\cdot}}$ the seminorm defined by $a_h$ on $\Uh$, a straightforward adaptation of the arguments used in~\cite[Lemma~4]{Di-Pietro.Ern.ea:14} shows that
\begin{equation}\label{eq:norm1h.ah}
  \forall\uvh\in\Uh,\qquad
  \norm[1,h]{\uvh}\lesssim\norm[a,h]{\uvh}\lesssim\norm[1,h]{\uvh},
\end{equation}
which expresses the coercivity and boundedness of $a_h$.
Additionally, following the arguments in~\cite[Theorem~8]{Di-Pietro.Ern.ea:14}, one can easily prove that the bilinear form $a_h$ enjoys the following consistency property:
For all $v\in H^{\max(2,l)}(\Omega)\cap L^2_0(\Omega)$, $l\ge 1$, such that $\partial_{\normal} v=0$ on $\partial\Omega$,
\begin{equation}\label{eq:cons.ah}
  \sup_{\uzh\in\UhO,\norm[1,h]{\uzh}=1}\left|a_h(\Ih v,\uzh) + (\LAPL v, z_h)\right|
  \lesssim h^{\min(k+1,l-1)}\norm[H^l(\Omega)]{v}.
\end{equation}

\begin{remark}[Consistency of $a_{h}$]
  For sufficiently regular solutions (i.e., when \mbox{$l=k+2$)}, equation~\eqref{eq:cons.ah} shows that the consistency error scales as $h^{k+1}$.
  This is a consequence of the fact that both the potential reconstruction $\pT$ (cf.~\eqref{eq:pT}) and the stabilization bilinear form $s_{1,h}$ (cf.~\eqref{eq:s1h}) are consistent for exact solutions that are polynomials of degree $(k+1)$ inside each element.
  In particular, a key point in $s_{1,h}$ is to penalize $\lproj[F]{k}(v_F-v_T)$ instead of $(v_F-v_T)$.
  A similar stabilization bilinear form had been independently suggested in the context of Hybridizable Discontinuous Galerkin methods in~\cite[Remark~1.2.4]{Lehrenfeld:10}.
\end{remark}

The discrete problem reads:
For all $1\le n\le N$, find $(\uchn,\uwhn)\in\UhO\times\Uh$ such that
\begin{subequations}
  \label{eq:discrete}
  \begin{alignat}{2}
    \label{eq:discrete:1}
    &(\dt c_h^n, \varphi_h) + a_h(\uwhn,\uphi) = 0 &\qquad&\forall\uphi\in\Uh,
    \\
    \label{eq:discrete:2}
    &(w_h^n,\psi_h) = (\Phi'(c_h^n),\psi_h) + \gamma^2 a_h(\uchn,\upsi) &\qquad&\forall\upsi\in\Uh,
  \end{alignat}
\end{subequations}
and $\uchn[0]\in\UhO$ solves
\begin{equation}\label{eq:discrete.ic}
  a_h(\uchn[0],\uphi) = -(\LAPL c_0,\varphi_h)\qquad\forall\uphi\in\Uh.
\end{equation}
We note, in passing, that the face DOFs in $\uchn[0]$ are not needed to initialize the algorithm.

\begin{remark}[Static condensation]\label{rem:stat.cond}
  Problem~\eqref{eq:discrete} is a system of nonlinear algebraic equations, which can be solved using an iterative algorithm.
  When first order (Newton-like) algorithms are used, element-based DOFs can be locally eliminated at each iteration by a standard static condensation procedure.
\end{remark}


\section{Stability analysis}\label{sec:stab}

In this section we establish some uniform a priori bounds on the discrete solution.
To this end, we need a discrete counterpart of Agmon's inequality; cf.~\cite[Lemma~13.2]{Agmon:10} and also~\cite[Theorem~3]{Adams.Fournier:77}.
We define on $\Uh$ the following $L^2$-like inner product:
\begin{equation}\label{eq:prod0h}
  \begin{aligned}
    (\uvh,\uzh)_{0,h}&\eqbydef (v_h,z_h) + s_{0,h}(\uvh,\uzh),
    \\
    s_{0,h}(\uvh,\uzh)&\eqbydef \sum_{T\in\Th}\sum_{F\in\Fh[T]} h_F(\lproj[F]{k}(v_F-v_T),\lproj[F]{k}(z_F-z_T))_F,
  \end{aligned}
\end{equation}
and denote by $\norm[0,h]{{\cdot}}$ and $\seminorm[0,h]{{\cdot}}$ the norm and seminorm corresponding to the bilinear forms $(\cdot,\cdot)_{0,h}$ and $s_{0,h}$, respectively.
For further insight on the role of $s_{0,h}$, cf. Remark~\ref{rem:s0h}.
We introduce the discrete Laplace operator $\uLh:\Uh\to\Uh$ such that, for all $\uvh\in\Uh$,
\begin{equation}
  \label{eq:uLh}
  -(\uLh\uvh,\uzh)_{0,h} = a_h(\uvh,\uzh)\qquad\forall\uzh\in\Uh,
\end{equation}
and we denote by $\Lh\uvh$ (no underline) the broken polynomial function in $\Poly{k+1}(\Th)$ obtained from element DOFs in $\uLh\uvh$.

\begin{remark}[Restriction of $\uLh$ to $\UhO\to\UhO$]\label{rem:restr.uLh}
  Whenever $\uvh\in\UhO$, $\uLh\uvh\in\UhO$.
  To prove it, it suffices to take $\uzh=\Ih\chfun{\Omega}$ in~\eqref{eq:uLh} (with $\chfun{\Omega}$ characteristic function of $\Omega$), and observe that the left-hand side satisfies $(\uLh\uvh,\uzh)_{0,h}=(\Lh\uvh,1)$ while, by definition~\eqref{eq:ah} of the bilinear form $a_h$, the right-hand side vanishes.
  In what follows, we keep the same notation for the (bijective) restriction of $\uLh$ to $\UhO\to\UhO$.
\end{remark}

The following result, valid for $d\in\{2,3\}$, will be proved in Appendix~\ref{sec:proofs}.
\begin{lemma}[Discrete Agmon's inequality]\label{lem:agmon}
  Assume mesh quasi-uniformity~\eqref{eq:qu}.
  Then, it holds with real number $C>0$ independent of $h$,
  \begin{equation}\label{eq:agmon}
    \forall\uvh\in\UhO,\qquad
    \norm[L^\infty(\Omega)]{v_h}\le C
    \norm[1,h]{\uvh}^{\frac12}\norm[0,h]{\uLh\uvh}^{\frac12}.
  \end{equation}
\end{lemma}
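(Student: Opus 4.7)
The plan is to establish a discrete counterpart of the continuous Agmon-type inequality $\norm[L^\infty(\Omega)]{v}\lesssim\norm[L^6(\Omega)]{v}^{1/2}\norm{\LAPL v}^{1/2}$ (which is valid in dimension $d=3$ for smooth enough functions with vanishing mean value) and then to use the Sobolev--Poincar\'{e}-type embedding~\eqref{eq:friedrichs} with $r=6$ to replace $\norm[L^6(\Omega)]{v_h}$ by $\norm[1,h]{\uvh}$. The two-dimensional case is strictly easier because the discrete embedding~\eqref{eq:friedrichs} then holds for every finite exponent, so the same chain of inequalities will apply a fortiori.

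The first ingredient is the sharp global inverse inequality from~\eqref{eq:glob.inv} used with $p=6$ and $q=\infty$, which, thanks to the quasi-uniformity assumption~\eqref{eq:qu}, yields $\norm[L^\infty(\Omega)]{v_h}\lesssim h^{-1/2}\norm[L^6(\Omega)]{v_h}$. The price is a factor $h^{-1/2}$ that must be reabsorbed. The second ingredient is an $h$-weighted discrete estimate of the form $\norm[L^6(\Omega)]{v_h}^2\lesssim h\,\norm[1,h]{\uvh}\,\norm[0,h]{\uLh\uvh}$, which discretises the continuous sharpening of Sobolev provided by elliptic regularity. To derive it I would test the defining relation~\eqref{eq:uLh} of the discrete Laplacian against the hybrid interpolant $\uzh=\Ih(v_h|v_h|^4)$, exploit the boundedness of $a_h$ (cf.~\eqref{eq:norm1h.ah}), the $H^1$-stability~\eqref{eq:Ih.stab} of $\Ih$, and a H\"{o}lder decomposition of $\GRADh(v_h|v_h|^4)=5|v_h|^4\GRADh v_h$ in order to resurrect the $L^6$-norm of $v_h$ on the right-hand side to a convenient power before rearranging. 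Combining the two ingredients and invoking~\eqref{eq:friedrichs} yields the stated bound.

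I expect the main difficulty to lie in the nonlinear test-function step. Ensuring that $\Ih(v_h|v_h|^4)$ has controlled $\norm[1,h]{\cdot}$- and $\norm[0,h]{\cdot}$-norms requires invoking the $W^{s,p}$-stability of the local $L^2$-projectors~\eqref{eq:lproj.stab} together with the local Lebesgue embeddings~\eqref{eq:leb.emb}, and the interplay between element- and face-polynomial degrees must be handled carefully to avoid spurious powers of $h$. Should a direct approach prove unwieldy, a plausible fallback is to bootstrap through the $L^p$-scale by iteratively raising the exponent in a Moser-type fashion, paying at each step a controlled $h$-penalty that is ultimately absorbed by the elementary energy estimate $\norm[1,h]{\uvh}^2\lesssim\norm[0,h]{\uvh}\,\norm[0,h]{\uLh\uvh}$ obtained by testing~\eqref{eq:uLh} with $\uzh=\uvh$ and using coercivity~\eqref{eq:norm1h.ah} together with Cauchy--Schwarz in $(\cdot,\cdot)_{0,h}$.
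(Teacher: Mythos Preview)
Your second ingredient, the estimate $\norm[L^6(\Omega)]{v_h}^2\lesssim h\,\norm[1,h]{\uvh}\,\norm[0,h]{\uLh\uvh}$, cannot hold. Take $\uvh=\Ih v$ for a fixed smooth $v\in H^2(\Omega)\cap L^2_0(\Omega)$ with $\norm[L^6(\Omega)]{v}>0$: by the consistency~\eqref{eq:cons.ah} of $a_h$ and the inverse inequality one checks that $\norm[0,h]{\uLh\Ih v}$ stays bounded as $h\to 0$, and so does $\norm[1,h]{\Ih v}$ by~\eqref{eq:Ih.stab}; hence the right-hand side tends to $0$ while the left-hand side tends to $\norm[L^6(\Omega)]{v}^2>0$. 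More generally, no purely functional inequality on $\UhO$ can manufacture the positive power of $h$ you need to cancel the $h^{-1/2}$ from the inverse inequality, because both sides of~\eqref{eq:agmon} are of order one on interpolants of smooth functions. The nonlinear test-function route has a further structural obstruction: $v_h|v_h|^4$ is a broken polynomial with jumps, so it is not in $H^1(\Omega)$ and the stability bound~\eqref{eq:Ih.stab} is unavailable; and since $a_h$ acts through the reconstructions $\pT$ rather than through $\GRADh$, testing does not reproduce the pointwise identity $\GRADh(v_h|v_h|^4)=5|v_h|^4\GRADh v_h$ that a Moser iteration would require.

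The paper gains the needed power of $h$ from an \emph{approximation} error, not from a functional inequality. One introduces the discrete Green's operator $\uGh=-(\restrto{\uLh}{\UhO})^{-1}$, writes $v_h=-\Gh\uphi$ with $\uphi\eqbydef\uLh\uvh$, and splits
\[
\norm[L^\infty(\Omega)]{v_h}\le\norm[L^\infty(\Omega)]{\lproj{k+1}\G\varphi_h}+\norm[L^\infty(\Omega)]{\Gh\uphi-\lproj{k+1}\G\varphi_h}.
\]
The first term is handled by the $L^\infty$-stability~\eqref{eq:lproj.stab} of $\lproj{k+1}$, the \emph{continuous} Agmon inequality for $\G\varphi_h\in H^2(\Omega)$, elliptic regularity, and an $H^{-1}$-bound $\norm[H^{-1}(\Omega)]{\Lh\uvh}\lesssim\norm[1,h]{\uvh}$. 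For the second term one uses the global inverse inequality~\eqref{eq:glob.inv} with $p=2$, paying $h^{-d/2}$, and absorbs this via an Aubin--Nitsche estimate $\norm{\Gh\uphi-\lproj{k+1}\G\varphi_h}\lesssim h^2\norm[0,h]{\uphi}$; the remaining factor $h^{(4-d)/2}$ is then split as $h^{(3-d)/2}(h\norm[0,h]{\uLh\uvh})^{1/2}\norm[0,h]{\uLh\uvh}^{1/2}$ and closed with $h\norm[0,h]{\uLh\uvh}\lesssim\norm[1,h]{\uvh}$. The missing idea in your plan is precisely this Green's-operator splitting, which converts the $h$-compensation problem into a second-order $L^2$-approximation estimate where a factor $h^2$ is natural.
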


We also recall the following discrete Gronwall's inequality (cf.~\cite[Lemma~5.1]{Heywood.Rannacher:90}).

\begin{lemma}[Discrete Gronwall's inequality]\label{lem:disc.gronwall}
  Let two reals $\delta,G>0$ be given, and, for integers $n\ge 1$, let $a^n$, $b^n$, and $\chi^n$ denote nonnegative real numbers such that
  $$
  a^N + \delta\sum_{n=1}^N b^n \le
  \delta\sum_{n=1}^N\chi^na^n + G\qquad\forall N\in\Natural^*.
  $$
  Then, if $\chi^n\delta<1$ for all $n$, letting $\varsigma^n\eqbydef (1-\chi^n\delta)^{-1}$, it holds
  \begin{equation}
    \label{eq:disc.gronwall}
    a^N + \delta\sum_{n=1}^N b^n \le \exp\left(
    \delta\sum_{n=1}^N\varsigma^n\chi^n
    \right)\times G\qquad\forall N\in\Natural^*.
  \end{equation}
\end{lemma}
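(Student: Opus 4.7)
The plan is to reduce the hypothesis to a scalar recurrence in a single cumulative quantity, iterate that recurrence, and then convert the resulting product of $\varsigma^n$ factors into an exponential via a standard logarithmic estimate.

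First, I would absorb the running sum of $b^n$ into the unknown. Defining $A^n := a^n + \delta\sum_{m=1}^n b^m$ for $n \ge 1$ (with $A^0 := 0$), the nonnegativity of the $b^n$ gives $a^n \le A^n$, so the hypothesis rewrites as
\[
A^N \le \delta\sum_{n=1}^N \chi^n A^n + G \qquad \forall N \ge 1.
\]
The crucial observation motivating this step is that $\delta\sum_{n=1}^N b^n$ sits on the same side of the hypothesis as $a^N$, so replacing $a^n$ by $A^n$ on the right only enlarges it and preserves the inequality; this is what guarantees that $\delta\sum b^n$ survives on the left-hand side of the final bound with the correct constant.

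Next, I would iterate. Setting $f^N := \delta\sum_{n=1}^N \chi^n A^n + G$, so that $A^N \le f^N$ and $f^0 = G$, the telescoping identity $f^N - f^{N-1} = \delta\chi^N A^N \le \delta\chi^N f^N$, combined with $\delta\chi^N < 1$, rearranges into $f^N \le \varsigma^N f^{N-1}$. A straightforward induction then yields
\[
A^N \le f^N \le \left(\prod_{n=1}^N \varsigma^n\right) G.
\]
To finish, I would bound the product by an exponential using the elementary inequality $-\log(1-x) \le x/(1-x)$ valid for $0 \le x < 1$ (immediate from $-\log(1-x) = \int_0^x (1-t)^{-1}\,dt$ and the pointwise bound $(1-t)^{-1} \le (1-x)^{-1}$ on $[0,x]$). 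Applied with $x = \delta\chi^n$, it gives $\log\varsigma^n \le \delta\chi^n\varsigma^n$; summing in $n$ and exponentiating produces the factor $\exp\bigl(\delta\sum_{n=1}^N \chi^n\varsigma^n\bigr)$ of~\eqref{eq:disc.gronwall}.

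The argument is essentially routine and I do not foresee a substantive obstacle. The only conceptual subtlety worth flagging is the absorption step: one must verify that it preserves rather than reverses the inequality, and this is precisely where the nonnegativity of the $b^n$ is used. Once that is in place, the one-step recursion $f^N \le \varsigma^N f^{N-1}$ and the logarithmic bound on $\log\varsigma^n$ are both textbook, so no further work is needed.
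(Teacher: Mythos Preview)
Your argument is correct. The absorption of $\delta\sum b^m$ into $A^n$, the one-step recursion $f^N\le\varsigma^N f^{N-1}$, and the bound $-\log(1-x)\le x/(1-x)$ are all sound, and together they yield exactly~\eqref{eq:disc.gronwall}.

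As for comparison: the paper does not actually supply a proof of this lemma. It is stated as a recalled result with a citation to~\cite[Lemma~5.1]{Heywood.Rannacher:90}, and no argument is given in the text. Your write-up is in fact the standard proof found in that reference (and many subsequent textbooks), so there is nothing to contrast; you have simply filled in what the paper leaves to the literature.
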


We are now ready to prove the a priori bounds.

\begin{lemma}[Uniform a priori bounds]\label{lem:a-priori}
  Under the assumptions of Lemma~\ref{lem:agmon}, and further assuming that $\tau\le L$ for a given real number $L>0$ independent of $h$ and of $\tau$ (but depending on $\gamma^2$) and sufficiently small, there is a real number $C>0$ independent of $h$ and $\tau$ such that
  \begin{equation*}\label{eq:a-priori}
    \max_{1\le n\le N}\left(
    \norm[a,h]{\uchn}^2 
    + (\Phi(c_h^n),1)
    + \norm{w_h^n}^2
    + \norm[L^\infty(\Omega)]{c_h^n}
    + \norm[0,h]{\uLh\uchn}^2
    \right) + \sum_{n=1}^N\tau\norm[a,h]{\uwhn}^2
    \le C.
  \end{equation*}
\end{lemma}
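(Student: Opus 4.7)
The plan is to mimic the classical energy argument of Kay--Styles--Welford adapted to the hybrid setting. I will first derive a discrete energy-decay inequality by testing~\eqref{eq:discrete:1} with $\uphi=\uwhn$ and~\eqref{eq:discrete:2} with $\upsi=\dt\uchn$, using the smallness $\tau\le L\lesssim\gamma^2$ to absorb the nonlinear error. Then I will pass to the remaining pointwise-in-$n$ bounds by invoking the Friedrichs inequality~\eqref{eq:friedrichs}, the discrete Agmon inequality of Lemma~\ref{lem:agmon}, and an elliptic-type estimate derived from~\eqref{eq:discrete:2}.

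Subtracting the two tested equations yields
\[
  \|\uwhn\|_{a,h}^2 + (\Phi'(c_h^n),\dt c_h^n) + \gamma^2 a_h(\uchn,\dt\uchn) = 0.
\]
The polarization identity gives $\gamma^2 a_h(\uchn,\dt\uchn) = \tfrac{\gamma^2}{2}\dt\|\uchn\|_{a,h}^2 + \tfrac{\gamma^2\tau}{2}\|\dt\uchn\|_{a,h}^2$, while a second-order Taylor expansion combined with $\Phi''(\xi)=3\xi^2-1\ge -1$ produces $(\Phi'(c_h^n),\dt c_h^n)\ge \dt(\Phi(c_h^n),1)-\tfrac{1}{2\tau}\|c_h^n-c_h^{n-1}\|^2$. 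Testing~\eqref{eq:discrete:1} with $\uphi=\dt\uchn$ yields $\|\dt c_h^n\|^2\le \|\uwhn\|_{a,h}\|\dt\uchn\|_{a,h}$, so Young's inequality (with parameter tied to $\gamma^2$) together with the CFL-type smallness $\tau\le L\lesssim\gamma^2$ absorbs both quadratic contributions on the left-hand side, producing the energy-decay inequality
\[
  \tfrac{1}{2}\|\uwhn\|_{a,h}^2 + \dt(\Phi(c_h^n),1) + \tfrac{\gamma^2}{2}\dt\|\uchn\|_{a,h}^2 + \tfrac{3\gamma^2\tau}{8}\|\dt\uchn\|_{a,h}^2 \le 0.
\]
Since the functional $n\mapsto(\Phi(c_h^n),1)+\tfrac{\gamma^2}{2}\|\uchn\|_{a,h}^2$ is thereby non-increasing, the pointwise-in-$n$ bounds on $(\Phi(c_h^n),1)$ and $\|\uchn\|_{a,h}^2$ follow (the bound at $n=0$ being supplied by~\eqref{eq:discrete.ic}, $a_h$-coercivity, and $c_0\in H^2\hookrightarrow L^\infty$), and multiplying by $\tau$ and summing delivers the bound on $\sum_n\tau\|\uwhn\|_{a,h}^2$.

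To close the estimate, Friedrichs' inequality~\eqref{eq:friedrichs} applied to $\uchn\in\UhO$ gives $\|c_h^n\|_{L^6}\lesssim\|\uchn\|_{a,h}$, whence $\|\Phi'(c_h^n)\|\lesssim 1+\|c_h^n\|_{L^6}^3$ is pointwise bounded. Testing~\eqref{eq:discrete:2} with $\upsi=\Ih 1$ gives $\bar w_h^n = |\Omega|^{-1}(\Phi'(c_h^n),1)$, while testing with $\upsi=\uLh\uchn\in\UhO$ (cf.~Remark~\ref{rem:restr.uLh}) together with~\eqref{eq:uLh} produces the elliptic estimate $\gamma^2\|\uLh\uchn\|_{0,h}\le \|\Phi'(c_h^n)\|+\|w_h^n\|$; Lemma~\ref{lem:agmon} then controls $\|c_h^n\|_{L^\infty}$ via $\|\uchn\|_{a,h}$ and $\|\uLh\uchn\|_{0,h}$. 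The main obstacle will be the pointwise-in-$n$ bound on $\|w_h^n\|$ itself: my approach is to test~\eqref{eq:discrete:2} with $\upsi=\uhwhn$, exploit the invariance of $a_h$ under constants to rewrite $\gamma^2 a_h(\uchn,\uhwhn)=\gamma^2 a_h(\uchn,\uwhn)=-\gamma^2(\dt c_h^n,c_h^n)$ using~\eqref{eq:discrete:1} with $\uphi=\uchn$, expand via the identity $(\dt c_h^n,c_h^n)=\tfrac{1}{2}\dt\|c_h^n\|^2+\tfrac{\tau}{2}\|\dt c_h^n\|^2$, and close by the discrete Gronwall inequality (Lemma~\ref{lem:disc.gronwall}) together with the already-established pointwise bounds to tame the sign-indefinite telescoping contribution. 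This last step is the one I expect to require the most care.
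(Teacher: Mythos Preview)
Your energy-decay argument (step~(i) in the paper's proof) is correct and essentially identical to the paper's, and your elliptic estimate $\gamma^2\norm[0,h]{\uLh\uchn}\le\norm{\Phi'(c_h^n)}+\norm{w_h^n}$ together with Lemma~\ref{lem:agmon} is exactly how the paper closes the $L^\infty$ and $\norm[0,h]{\uLh\uchn}$ bounds \emph{once} $\max_n\norm{w_h^n}$ is under control. The genuine gap is your proposed route to that last bound.

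Testing~\eqref{eq:discrete:2} with the zero-mean part of $\uwhn$ and inserting~\eqref{eq:discrete:1} with $\uphi=\uchn$ gives
\[
  \norm{w_h^n-\bar w_h^n}^2 = (\Phi'(c_h^n),w_h^n-\bar w_h^n) - \gamma^2(\dt c_h^n,c_h^n)
  = (\Phi'(c_h^n),w_h^n-\bar w_h^n) - \tfrac{\gamma^2}{2}\dt\norm{c_h^n}^2 - \tfrac{\gamma^2\tau}{2}\norm{\dt c_h^n}^2.
\]
The term $-\tfrac{\gamma^2}{2}\dt\norm{c_h^n}^2$ is \emph{not} controllable pointwise in $n$: although $\norm{c_h^n}^2$ is uniformly bounded, its backward difference can be of order $\tau^{-1}$, so the right-hand side is only $\lesssim 1+\tau^{-1}$. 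Gronwall cannot rescue this, because the inequality contains no $\norm{w_h^{n-1}}^2$ to telescope against; summing in $n$ merely reproduces the $L^2$-in-time bound $\sum_n\tau\norm{w_h^n}^2\lesssim 1$ you already have from the energy step.

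The paper's argument is structurally different: it subtracts~\eqref{eq:discrete:2} at consecutive time levels and tests the \emph{increment} with $\uwhn$, producing $(w_h^n-w_h^{n-1},w_h^n)$ on the left (which telescopes in $\norm{w_h^n}^2$ via~\eqref{eq:magic:BE}) and $(\Phi'(c_h^n)-\Phi'(c_h^{n-1}),w_h^n)$ on the right. The latter is bounded by $\tau C^n\norm{w_h^n}^2$ with $C^n\sim 1+\norm[L^\infty(\Omega)]{c_h^n}^4+\norm[L^\infty(\Omega)]{c_h^{n-1}}^4$, and \emph{this} is where Gronwall applies. But for $\sum_n\tau C^n$ to be finite one needs the intermediate bound $\sum_n\tau\norm[L^\infty(\Omega)]{c_h^n}^4\lesssim 1$, which the paper establishes as a separate step (combining Agmon with the already-available bound $\sum_n\tau\norm[0,h]{\uLh\uchn}^2\lesssim 1$ obtained from~\eqref{eq:discrete:2}). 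Your outline skips this intermediate $L^4(0,\tF;L^\infty)$ step entirely; without it, even the paper's incremental argument would not close.
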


\begin{proof}
  The proof is split into several steps.
  \begin{asparaenum}[(i)]
  \item We start by proving that
    \begin{equation}
      \label{eq:a-priori:i}
      \max_{1\le n\le N}\left(
      \norm[a,h]{\uchn}^2    
      + (\Phi(c_h^n),1)
      \right) + \sum_{n=1}^N\tau\norm[a,h]{\uwhn}^2
      \lesssim 1.
    \end{equation}
    Subtracting~\eqref{eq:discrete:2} with $\upsi=\uchn-\uchn[n-1]$ from~\eqref{eq:discrete:1} with $\uphi=\tau\uwhn$, and using the fact that, for all $r,s\in\Real$, $\Phi'(r)(r-s)\ge\Phi(r)-\Phi(s)-\frac12(r-s)^2$, it is inferred, for all $1\le n\le N$, that
    \begin{equation}\label{eq:a-priori:1}
      \gamma^2a_h(\uchn,\uchn-\uchn[n-1])
      + \tau\norm[a,h]{\uwhn}^2 
      + (\Phi(c_h^n),1)
      \le
      \frac12\norm{c_h^n-c_h^{n-1}}^2
      + (\Phi(c_h^{n-1}),1).
    \end{equation}
    Notice that $(\Phi(c_h^n),1)\ge 0$ for all $0\le n\le N$ by definition~\eqref{eq:Phi} of $\Phi$.
    Making $\uphi=\tau(\uchn-\uchn[n-1])$ in~\eqref{eq:discrete:1} and using the Cauchy--Schwarz and Young's inequalities, we infer that
    \begin{equation}\label{eq:a-priori:2}
      \norm{c_h^n-c_h^{n-1}}^2
      \le\frac{\tau}{2}\norm[a,h]{\uwhn}^2
      + \frac{\tau}{2}\norm[a,h]{\uchn-\uchn[n-1]}^2.
    \end{equation}
    Additionally, recalling the following formula for the backward Euler scheme:
    \begin{equation}\label{eq:magic:BE}
      2x(x-y) = x^2 + (x-y)^2 - y^2,
    \end{equation}
    it holds
    \begin{equation}\label{eq:a-priori:3}
      a_h(\uchn,\uchn-\uchn[n-1])
      = \frac12\left(
      \norm[a,h]{\uchn}^2
      + \norm[a,h]{\uchn-\uchn[n-1]}^2
      - \norm[a,h]{\uchn[n-1]}^2
      \right).
    \end{equation}
    Plugging~\eqref{eq:a-priori:2} and~\eqref{eq:a-priori:3} into~\eqref{eq:a-priori:1}, we obtain
    \begin{multline*}
    \gamma^2\norm[a,h]{\uchn}^2
    + \left(\gamma^2-\frac{\tau}{2}\right)\norm[a,h]{\uchn-\uchn[n-1]}^2
    + \frac{3\tau}{2}\norm[a,h]{\uwhn}^2
    + 2(\Phi(c_h^n),1)
    \\ \le
    \gamma^2\norm[a,h]{\uchn[n-1]}^2
    + 2(\Phi(c_h^{n-1}),1).
    \end{multline*}
    Provided $\tau<2\gamma^2$, the bound~\eqref{eq:a-priori:i} follows summing the above inequality over $1\le n\le N$, and using the fact that $\gamma^2\norm[a,h]{c_h^0}+2(\Phi(c_h^0),1)\lesssim 1$.
    To prove this bound, observe that
    $$
    \begin{aligned}
      \gamma^2\norm[a,h]{c_h^0}+2(\Phi(c_h^0),1)
      &\lesssim \gamma^2\norm[1,h]{\uchn[0]}^2 + 1 + \norm[L^4(\Omega)]{c_h^0}^4 + \norm{c_h^0}^2 
      \\
      &\lesssim \gamma^2\norm[1,h]{\uchn[0]}^2 + 1 + \norm[1,h]{\uchn[0]}^4 + \norm[1,h]{\uchn[0]}^2
      \lesssim 1,
    \end{aligned}
    $$
    where we have used the definition~\eqref{eq:Phi} of the free-energy $\Phi$ in the first line followed by the discrete Friedrichs' inequality with $r=4,2$ in the second line and the first bound on the initial datum in~\eqref{eq:bnd.uchn0} below to conclude.
    
  \item We next prove that 
    \begin{equation}\label{eq:a-priori:ii}
      \sum_{n=1}^N\tau\norm[L^\infty(\Omega)]{c_h^n}^4\lesssim 1.
    \end{equation}
    The discrete Agmon's inequality~\eqref{eq:agmon} followed by the first inequality in~\eqref{eq:norm1h.ah} yields
    $$
    \sum_{n=1}^N\tau\norm[L^\infty(\Omega)]{c_h^n}^4
    \lesssim\sum_{n=1}^N\tau\norm[a,h]{\uchn}^2\norm[0,h]{\uLh\uchn}^2
    \lesssim\left(\max_{1\le n\le N}\norm[a,h]{\uchn}^{2}\right)\times
    \sum_{n=1}^N\tau\norm[0,h]{\uLh\uchn}^2.
    $$
    The first factor is $\lesssim 1$ owing to~\eqref{eq:a-priori:i}.
    Thus, to prove~\eqref{eq:a-priori:ii}, it suffices to show that also the second factor is $\lesssim 1$.
    Using the definition~\eqref{eq:uLh} of $\uLh$ followed by~\eqref{eq:discrete:2} with $\upsi=\uLh\uchn$, we infer that
    \begin{equation}
      \label{eq:a-priori:ii:intermediate}
      \gamma^2\norm[0,h]{\uLh\uchn}^2
      =-\gamma^2a_h(\uchn,\uLh\uchn)
      =(\Phi'(c_h^n),\Lh\uchn)-(w_h^n,\Lh\uchn).
    \end{equation}
    Using again~\eqref{eq:uLh} for the second term in the right-hand side of~\eqref{eq:a-priori:ii:intermediate} followed by the Cauchy--Schwarz and Young's inequalities, we obtain
    $$
    \begin{aligned}
      \gamma^2\norm[0,h]{\uLh\uchn}^2
      &=(\Phi'(c_h^n),\Lh\uchn)+a_h(\uchn,\uwhn)+s_{0,h}(\uLh\uchn,\uwhn)
      \\
      &\le
      \frac{1}{2\gamma^2}\norm{\Phi'(c_h^n)}^2
      {+} \frac{\gamma^2}{2}\norm[0,h]{\uLh\uchn}^2
      {+} \frac{\gamma^2}2\norm[a,h]{\uchn}^2
      {+} \frac1{2\gamma^2}\norm[a,h]{\uwhn}^2
      {+} \frac{1}{2\gamma^2}\seminorm[0,h]{\uwhn}^2.
    \end{aligned}
    $$
    Hence, since $\seminorm[0,h]{\uwhn}\le h\seminorm[1,h]{\uwhn}\lesssim\norm[a,h]{\uwhn}$,
    $$
    \gamma^2\norm[0,h]{\uLh\uchn}^2
    \lesssim
    \gamma^{-2}\norm{\Phi'(c_h^n)}^2
    + \gamma^2\norm[a,h]{\uchn}^2
    + \gamma^{-2}\norm[a,h]{\uwhn}^2.
    $$
    The fact that $\sum_{n=1}^n\tau\norm[0,h]{\uLh\uchn}^2\lesssim 1$ then follows multiplying the above inequality by $\tau$, summing over $1\le n\le N$, using~\eqref{eq:a-priori:i} to bound the second and third term in the right-hand side, and observing that
    \begin{equation}\label{eq:bnd.Phi'}
      \norm{\Phi'(c_h^n)}^2
      \le\norm[L^6(\Omega)]{c_h^n}^6 + 2\norm[L^4(\Omega)]{c_h^n}^4 + \norm{c_h^n}^2
      \lesssim \norm[1,h]{\uchn}^6 + \norm[1,h]{\uchn}^4 + \norm[1,h]{\uchn}^2
      \lesssim 1,
    \end{equation}
    where we have used the definition~\eqref{eq:Phi} to obtain the first bound, Friedrichs' inequality~\eqref{eq:friedrichs} with $r=6,4,2$ to obtain the second bound, and~\eqref{eq:a-priori:i} together with the first inequality in~\eqref{eq:norm1h.ah} to conclude.
    
  \item We proceed by proving that
    \begin{equation}\label{eq:a-priori:iii}
      \max_{1\le n\le N}\norm{w_h^n}^2
      + \gamma^2\sum_{n=1}^N\tau\norm{\dt c_h^n}^2
      \lesssim 1.
    \end{equation}
    Let $w_h^0\eqbydef\lproj{k+1}(\Phi'(c_h^0)-\gamma^2\LAPL c_0)$. Recalling~\eqref{eq:discrete.ic}, $w_h^0$ satisfies
    \begin{equation}\label{eq:uwhn0}
      (w_h^0,\psi_h) = (\Phi'(c_h^0),\psi_h) + \gamma^2 a_h(\uchn[0],\upsi)\qquad\forall\upsi\in\Uh.
    \end{equation}
    For any $1\le n\le N$, subtracting from~\eqref{eq:discrete:2} at time step $n$~\eqref{eq:discrete:2} at time step $(n-1)$ if $n>1$ or~\eqref{eq:uwhn0} if $n=1$, and selecting $\upsi=\uwhn$ as a test function in the resulting equation, it is inferred that
    $$
    (w_h^n-w_h^{n-1},w_h^n)=\tau\gamma^2a_h(\dt\uchn,\uwhn)+(\Phi'(c_h^n)-\Phi'(c_h^{n-1}),w_h^n).
    $$
    Using~\eqref{eq:discrete:1} with $\uphi=\tau\gamma^2\dt\uchn$ to infer $\tau\gamma^2a_h(\dt\uchn,\uwhn)=-\tau\gamma^2\norm{\dt c_h^n}^2$, we get
    \begin{equation}\label{eq:a-priori:4}
      (w_h^n-w_h^{n-1},w_h^n)
      + \tau\gamma^2\norm{\dt c_h^n}^2
      = (\Phi'(c_h^n)-\Phi'(c_h^{n-1}),w_h^n).
    \end{equation}
    From the fact that
    \begin{equation}\label{eq:magic.Phi'}
      \Phi'(r)-\Phi'(s)=(r^2+rs+s^2-1)(r-s),
    \end{equation}
    followed by the Cauchy--Schwarz and Young's inequalities, we infer
    \begin{equation}\label{eq:a-priori:5}
      |(\Phi'(c_h^n)-\Phi'(c_h^{n-1}),w_h^n)|
      \le
      \frac{\tau\gamma^2}{2}\norm{\dt c_h^n}^2 
      + \frac{\tau C^n}{2}\norm{w_h^n}^2,
    \end{equation}
    with $C^n\eqbydef C(1+\norm[L^\infty(\Omega)]{c_h^n}^4+\norm[L^\infty(\Omega)]{c_h^{n-1}}^4)$ for a real number $C>0$ independent of $h$ and $\tau$.
    Using~\eqref{eq:magic:BE} for the first term in the left-hand side of~\eqref{eq:a-priori:4} together with~\eqref{eq:a-priori:5} for the right-hand side, we get
    \begin{equation}\label{eq:a-priori:6}
      \norm{w_h^n}^2 + \norm{w_h^n-w_h^{n-1}}^2 + \tau\gamma^2\norm{\dt c_h^n}^2\le
      \tau C^n\norm{w_h^n}^2 + \norm{w_h^{n-1}}^2.
    \end{equation}
    Summing~\eqref{eq:a-priori:6} over $1\le n\le N$, observing that, thanks to~\eqref{eq:a-priori:ii} and the second bound in~\eqref{eq:bnd.uchn0} below, we can have $\tau C^n< 1$ for all $1\le n\le N$ provided that we choose $\tau$ small enough, and using the discrete Gronwall's inequality~\eqref{eq:disc.gronwall} (with $\delta=\tau$, $a^n=\norm{w_h^n}^2$, $b^n=\gamma^2\norm{\dt c_h^n}^2$, $\chi^n=C^n$ and $G=\norm{w_h^0}^2$), the estimate~\eqref{eq:a-priori:iii} follows if we can bound $\norm{w_h^0}^2$.
    To this end, recalling the definition of $w_h^0$ and using the Cauchy--Schwarz inequality, one has
    $$
    \norm{w_h^0}^2 = (\Phi'(c_h^0),w_h^0) - \gamma^2(\LAPL c_0,w_h^0)
    \le\left(\norm{\Phi'(c_h^0)} + \gamma^2\norm[H^2(\Omega)]{c_0}\right)\norm{w_h^0},
    $$
    and the conclusion follows from the regularity of $c_0$ noting the first bound in~\eqref{eq:bnd.uchn0} below and estimating the first term in parentheses as in~\eqref{eq:bnd.Phi'}.
    
  \item We conclude by proving the bound
    \begin{equation}\label{eq:a-priori:iv}
      \max_{1\le n\le N}\left(
      \norm[L^\infty(\Omega)]{c_h^n}
      + \norm[0,h]{\uLh\uchn}^2
      \right)
      \lesssim 1.
    \end{equation}
    Using the Cauchy--Schwarz and Young's inequalities to bound the right-hand side of~\eqref{eq:a-priori:ii:intermediate} followed by~\eqref{eq:friedrichs} with $r=6,4,2$ and the first inequality in~\eqref{eq:norm1h.ah}, we obtain, for all $1\le n\le N$,
    \begin{equation}\label{eq:a-priori:8}
      \begin{aligned}
        \gamma^2\norm[0,h]{\uLh\uchn}^2
        &\lesssim\gamma^{-2}\left(\norm{\Phi'(c_h^n)}^2 + \norm{w_h^n}^2\right)
        \\
        &\lesssim\left(
        \norm[L^6(\Omega)]{c_h^n}^6 + \norm[L^4(\Omega)]{c_h^n}^4 + \norm{c_h^n}^2 
        \right) + \norm{w_h^n}^2
        \\
        &\lesssim\left(
        \norm[a,h]{c_h^n}^6 + \norm[a,h]{c_h^n}^4 + \norm[a,h]{c_h^n}^2 
        \right) + \norm{w_h^n}^2 \lesssim 1,
      \end{aligned}
    \end{equation}
    where we have concluded using~\eqref{eq:a-priori:i} multiple times for the terms in parentheses and~\eqref{eq:a-priori:iii} for $\norm{w_h^n}^2$.
    Using the discrete Agmon's inequality~\eqref{eq:agmon} followed by Young's inequality and the first inequality in~\eqref{eq:norm1h.ah}, we infer
    $$
    \max_{1\le n\le N}\norm[L^\infty(\Omega)]{c_h^n}\lesssim 
    \max_{1\le n\le N}\left( \norm[a,h]{\uchn} + \norm[0,h]{\uLh\uchn} \right)    
    \lesssim 1,
    $$
    where the conclusion follows using~\eqref{eq:a-priori:i} for the first addend in the argument of the maximum and~\eqref{eq:a-priori:8} for the second.
  \end{asparaenum}
\end{proof}

\begin{proposition}[{Bounds for $\uchn[0]$}]
  Let $\uchn[0]\in\UhO$ be defined by~\eqref{eq:discrete.ic} from an initial datum $c_0\in H^2(\Omega)\cap L^2_0(\Omega)$ such that $\partial_{\normal} c_0=0$ on $\partial\Omega$.
  It holds, with real number $C>0$ independent of $h$,
  \begin{equation}\label{eq:bnd.uchn0}
    \norm[1,h]{\uchn[0]} + \norm[L^\infty(\Omega)]{c_h^0}\le C\norm[H^2(\Omega)]{c_0}.
  \end{equation}
\end{proposition}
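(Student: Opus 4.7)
The plan is to obtain the two bounds almost independently, each by testing the defining equation \eqref{eq:discrete.ic} against a well-chosen element of $\Uh$ and invoking the tools already assembled in this section.

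For the $\norm[1,h]{{\cdot}}$-bound, I would simply take $\uphi=\uchn[0]$ in \eqref{eq:discrete.ic}. The left-hand side equals $\norm[a,h]{\uchn[0]}^2$, while the right-hand side is controlled by $\norm{\LAPL c_0}\,\norm{c_h^0}\le \norm[H^2(\Omega)]{c_0}\,\norm{c_h^0}$. Since $\uchn[0]\in\UhO$, Friedrichs' inequality \eqref{eq:friedrichs} with $r=2$ yields $\norm{c_h^0}\lesssim\norm[1,h]{\uchn[0]}$, and the coercivity bound in \eqref{eq:norm1h.ah} gives $\norm[1,h]{\uchn[0]}\lesssim\norm[a,h]{\uchn[0]}$. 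Combining these and dividing through by $\norm[a,h]{\uchn[0]}$ produces $\norm[1,h]{\uchn[0]}\lesssim\norm[H^2(\Omega)]{c_0}$.

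For the $L^\infty$-bound, my plan is to apply the discrete Agmon's inequality~\eqref{eq:agmon}, which requires a handle on $\norm[0,h]{\uLh\uchn[0]}$. To obtain it, I would combine the definition \eqref{eq:uLh} of $\uLh$ with \eqref{eq:discrete.ic} to rewrite, for every $\uzh\in\Uh$,
\begin{equation*}
  (\uLh\uchn[0],\uzh)_{0,h} = -a_h(\uchn[0],\uzh) = (\LAPL c_0,z_h).
\end{equation*}
By the remark on the restriction of $\uLh$, $\uLh\uchn[0]\in\UhO$, so I may test with $\uzh=\uLh\uchn[0]$. The left-hand side then becomes $\norm[0,h]{\uLh\uchn[0]}^2$ and the right-hand side is bounded by Cauchy--Schwarz as $\norm{\LAPL c_0}\,\norm{L_h\uchn[0]}\le\norm[H^2(\Omega)]{c_0}\,\norm[0,h]{\uLh\uchn[0]}$ (using that the element-based contribution in $\norm[0,h]{{\cdot}}$ dominates the $L^2$-norm). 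Dividing through yields $\norm[0,h]{\uLh\uchn[0]}\lesssim\norm[H^2(\Omega)]{c_0}$.

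Plugging these two estimates into the discrete Agmon inequality~\eqref{eq:agmon} gives
\begin{equation*}
  \norm[L^\infty(\Omega)]{c_h^0}\lesssim\norm[1,h]{\uchn[0]}^{1/2}\norm[0,h]{\uLh\uchn[0]}^{1/2}\lesssim\norm[H^2(\Omega)]{c_0},
\end{equation*}
which combined with step one concludes the proof. There is no real obstacle here; the only point requiring a bit of care is checking that $\uLh\uchn[0]$ has zero average so that both Agmon's inequality and the self-test in the absorption argument are legitimately available, which is precisely the content of the remark on the restriction of $\uLh$ to $\UhO$.
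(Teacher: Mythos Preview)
Your proposal is correct and follows essentially the same route as the paper's own proof: test~\eqref{eq:discrete.ic} with $\uphi=\uchn[0]$ and use coercivity plus Poincar\'e for the $\norm[1,h]{{\cdot}}$-bound, then combine~\eqref{eq:uLh} with~\eqref{eq:discrete.ic} tested against $\uLh\uchn[0]$ to control $\norm[0,h]{\uLh\uchn[0]}$ and feed this into the discrete Agmon inequality. One small remark: the zero-average property of $\uLh\uchn[0]$ is not actually needed anywhere in this argument, since both~\eqref{eq:uLh} and~\eqref{eq:discrete.ic} hold for all $\uzh\in\Uh$, and Agmon's inequality~\eqref{eq:agmon} only requires $\uchn[0]\in\UhO$, which holds by construction.
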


\begin{proof}
  To prove the first bound in~\eqref{eq:bnd.uchn0}, let $\uphi=\uchn[0]$ in~\eqref{eq:discrete.ic} and use the first inequality in~\eqref{eq:norm1h.ah}, the Cauchy--Schwarz inequality and the discrete Poincar\'e's inequality~\eqref{eq:friedrichs} with $r=2$ to infer
  $$
  \norm[1,h]{\uchn[0]}^2\lesssim a_h(\uchn[0],\uchn[0])=-(\LAPL c_0,c_h^0)\le\norm{\LAPL c_0}\norm{c_h^0}\lesssim\norm[H^2(\Omega)]{c_0}\norm[1,h]{\uchn[0]}.
  $$
  To prove the second bound in~\eqref{eq:bnd.uchn0}, we start by noticing that, using the definition~\eqref{eq:uLh} of $\uLh$ with $\uzh=-\uLh\uchn[0]$,
  $$
  \norm[0,h]{\uLh\uchn[0]}^2 = -a_h(\uchn[0],\uLh\uchn[0])
  = (\LAPL c_0,\Lh\uchn[0])
  \le\norm[H^2(\Omega)]{c_0}\norm{\Lh\uchn[0]},
  $$
  hence $\norm[0,h]{\uLh\uchn[0]}\le\norm[H^2(\Omega)]{c_0}$.
  Combining the discrete Agmon's inequality~\eqref{eq:agmon} with the latter inequality and the first bound in~\eqref{eq:bnd.uchn0}, one gets
  $$
  \norm[L^\infty(\Omega)]{c_h^0}\le\norm[1,h]{\uchn[0]}^{\frac12}\norm[0,h]{\uLh\uchn[0]}^{\frac12}\lesssim\norm[H^2(\Omega)]{c_0},
  $$
  and the desired result follows.
\end{proof}


\section{Error analysis}\label{sec:err.anal}

In this section we carry out the error analysis of the method~\eqref{eq:discrete}.

\subsection{Error equations}\label{sec:err.anal:err.eq}

Our goal is to estimate the difference between the discrete solution obtained solving~\eqref{eq:discrete} and the projections of the exact solution such that, for all $1\le n\le N$, $\uhwhn=\Ih w^n$, while, for all $0\le n\le N$, $\uhchn\in\UhO$ solves
\begin{equation*}\label{eq:uhchn}
  a_h(\uhchn,\uphi) = -(\LAPL c^n,\varphi_h)\qquad\forall\uphi\in\Uh,
\end{equation*}
and $(\hchn,1)=0$.
We define, for all $1\le n\le N$, the errors
\begin{equation}\label{eq:errors}
  \uech\eqbydef\uchn-\uhchn,\qquad
  \uewh\eqbydef\uwhn-\uhwhn.
\end{equation}
By definition~\eqref{eq:discrete.ic}, $\uhchn[0]=\uchn[0]$, which prompts us to set $\uech[0]\eqbydef\underline{0}$.
Using Poincar\'{e}'s inequality~\eqref{eq:friedrichs} with $r=2$ and the consistency~\eqref{eq:cons.ah} of $a_h$, the following estimate is readily inferred:
For all $0\le n\le N$, assuming the additional regularity $c^n\in H^{k+2}(\Omega)$,
\begin{equation}\label{eq:approx.uhchn}
  \norm{\hchn-\lproj{k+1}c^n}
  \lesssim \norm[1,h]{\uhchn-\Ih c^n}
  \lesssim h^{k+1}\norm[H^{k+2}(\Omega)]{c^n}.
\end{equation}
\begin{remark}[Improved $L^2$-estimate]
  We notice, in passing, that, using elliptic regularity (which holds since $\Omega$ is convex, cf., e.g.,~\cite{Grisvard:92}), one can improve this result and show that $\norm{\hchn-\lproj{k+1}c^n}\lesssim h^{k+2}\norm[H^{k+2}(\Omega)]{c^n}$.
\end{remark}
Recalling~\eqref{eq:discrete}, for all $1\le n\le N$, the error $(\uech,\uewh)\in\UhO\times\Uh$ solves
\begin{subequations}\label{eq:err.eq}
  \begin{align}
    \label{eq:err.eq:1}
    &(\dt\ech,\varphi_h) + a_h(\uewh,\uphi) = \mathcal{E}(\uphi)
    &\qquad&\forall\uphi\in\Uh,
    \\
    \label{eq:err.eq:2}
    &(\ewh,\psi_h) = (\Phi'(c_h^n) - \Phi'(c^n),\psi_h) + \gamma^2 a_h(\uech,\upsi),
    &\qquad&\forall\upsi\in\Uh,
  \end{align}
\end{subequations}
where, in~\eqref{eq:err.eq:1}, we have defined the consistency error
\begin{equation}
  \label{eq:calE}
  \mathcal{E}(\uphi)\eqbydef -(\dt\hchn,\varphi_h) - a_h(\uhwhn,\uphi),
\end{equation}
while in~\eqref{eq:err.eq:2} we have combined the definitions of $\uhwhn$ and $\uhchn$ with~\eqref{eq:strong:2} to infer
$$
(\hwhn,\psi_h)- \gamma^2 a_h(\uhchn,\upsi)
= (w^n + \LAPL c^n,\psi_h)
= (\Phi'(c^n),\psi_h).
$$

\subsection{Error estimate}

\begin{theorem}[Error estimate]\label{thm:err.est}
  Suppose that the assumptions of Lemma~\ref{lem:a-priori} hold true.
  Let $(c,w)$ denote the solution to~\eqref{eq:strong}, for which we assume the following additional regularity:
  \begin{equation}\label{eq:reg}
    c\in C^2([0,\tF];L^2(\Omega))\cap C^1([0,\tF];H^{k+2}(\Omega)),\qquad
    w\in C^0([0,\tF];H^{k+2}(\Omega)).
  \end{equation}
  Then, the following estimate holds for the errors defined by~\eqref{eq:errors}:
  \begin{equation}\label{eq:err.est}
    \left(
    \max_{1\le n\le N}\norm[a,h]{\uech}^2 + \sum_{n=1}^N\tau\norm[a,h]{\uewh}^2
    \right)^{\frac12}
    \le C (h^{k+1} + \tau),
  \end{equation}
  with real number $C>0$ independent of $h$ and $\tau$.
\end{theorem}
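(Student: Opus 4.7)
I would adapt the dG energy argument of~\cite{Kay.Styles.ea:09} to the hybrid setting. Test~\eqref{eq:err.eq:1} with $\uphi=\tau\uewh$ and~\eqref{eq:err.eq:2} with $\upsi=\uech-\uech[n-1]$, then subtract; the cross term $(\ech-\ech[n-1],\ewh)$ cancels, leaving
\begin{equation*}
\tau\norm[a,h]{\uewh}^2 + \gamma^2 a_h(\uech,\uech-\uech[n-1]) = \tau\mathcal{E}(\uewh) - (\Phi'(c_h^n)-\Phi'(c^n),\ech-\ech[n-1]).
\end{equation*}
Doubling the identity and applying the backward-Euler formula~\eqref{eq:magic:BE} to $a_h(\uech,\uech-\uech[n-1])$ produces the telescoping left-hand side $2\tau\norm[a,h]{\uewh}^2 + \gamma^2\norm[a,h]{\uech}^2 + \gamma^2\norm[a,h]{\uech-\uech[n-1]}^2 - \gamma^2\norm[a,h]{\uech[n-1]}^2$, suitable for summation in $n$ and for the discrete Gronwall lemma.

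\textbf{Consistency error.} To bound $\mathcal{E}(\uphi)$ in~\eqref{eq:calE}, write $-\dt\hchn = -\dt(\hchn - c^n) - \dt c^n$. Since the elliptic projection defining $\hchn$ is linear and commutes with $\dt$, the first piece is $O(h^{k+1})$ by~\eqref{eq:approx.uhchn} applied to $d_t c$, while a standard Taylor expansion gives $\norm{\dt c^n - d_t c^n}\lesssim\tau$. Substituting $d_t c^n = \LAPL w^n$ from~\eqref{eq:strong:1} and pairing with the consistency~\eqref{eq:cons.ah} of $a_h$ against $\uhwhn=\Ih w^n$ yields $|\mathcal{E}(\uphi)|\lesssim (h^{k+1}+\tau)\norm[1,h]{\uphi}$; the test function may be assumed zero-mean since $\dt\hchn$ and $\LAPL w^n$ both have vanishing mean and $a_h$ annihilates constants. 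Young's inequality combined with~\eqref{eq:norm1h.ah} then absorbs $|2\tau\mathcal{E}(\uewh)|$ into $\tfrac{\tau}{2}\norm[a,h]{\uewh}^2 + C\tau(h^{k+1}+\tau)^2$.

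\textbf{Nonlinear term.} This is the main obstacle. Using the factorization $\Phi'(r)-\Phi'(s)=(r^2+rs+s^2-1)(r-s)$, the uniform a priori bound $\max_n\norm[L^\infty(\Omega)]{c_h^n}\lesssim 1$ from Lemma~\ref{lem:a-priori}, and $\norm[L^\infty(\Omega)]{c^n}\lesssim 1$ from~\eqref{eq:reg} (Sobolev embedding since $k+2>d/2$), one obtains the Lipschitz bound $|\Phi'(c_h^n)-\Phi'(c^n)|\lesssim|c_h^n-c^n|$. Splitting $c_h^n-c^n=\ech+(\hchn-c^n)$ and using~\eqref{eq:approx.uhchn} yields
\begin{equation*}
|(\Phi'(c_h^n)-\Phi'(c^n),\ech-\ech[n-1])|\lesssim(\norm{\ech}+h^{k+1})\norm{\ech-\ech[n-1]}.
\end{equation*}
The delicate step is to convert the factor $\norm{\ech-\ech[n-1]}$ into quantities that the left-hand side controls. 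I would test~\eqref{eq:err.eq:1} with $\uphi=\uech-\uech[n-1]\in\UhO$, obtaining the auxiliary identity $\norm{\ech-\ech[n-1]}^2/\tau = \mathcal{E}(\uech-\uech[n-1]) - a_h(\uewh,\uech-\uech[n-1])$; combined with the discrete Poincar\'e inequality~\eqref{eq:friedrichs} with $r=2$ and several Young's inequalities with parameters calibrated against $\gamma^2$, this reduces the nonlinear contribution to $C\tau\norm[a,h]{\uech}^2 + C\tau(h^{k+1}+\tau)^2 + \tfrac{\gamma^2}{4}\norm[a,h]{\uech-\uech[n-1]}^2$ plus a contribution absorbable into $\tfrac{\tau}{2}\norm[a,h]{\uewh}^2$, provided $\tau$ is sufficiently small relative to $\gamma^2$.

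\textbf{Discrete Gronwall.} Noting that $\uech[0]=\underline{0}$ (since $\uhchn[0]=\uchn[0]$ by construction of both), collecting the preceding bounds yields a per-step inequality of the form $\gamma^2\norm[a,h]{\uech}^2 + \tau\norm[a,h]{\uewh}^2 + \tfrac{\gamma^2}{4}\norm[a,h]{\uech-\uech[n-1]}^2 \le \gamma^2\norm[a,h]{\uech[n-1]}^2 + C\tau(h^{k+1}+\tau)^2 + C\tau\norm[a,h]{\uech}^2$. Summing over $n=1,\ldots,M\le N$ and applying Lemma~\ref{lem:disc.gronwall} with $a^n=\norm[a,h]{\uech}^2$, $b^n=\norm[a,h]{\uewh}^2$, and $\chi^n=C/\gamma^2$ bounded produces~\eqref{eq:err.est}; the smallness $\tau\le L$ ensures $\tau\chi^n<1$, and the exponential factor $\exp(\sum\tau\varsigma^n\chi^n)\lesssim 1$ uniformly since $N\tau\le\tF$.
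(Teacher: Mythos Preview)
Your energy setup and treatment of the consistency error $\mathcal{E}$ are essentially the paper's. The gap is in the nonlinear term. Your auxiliary identity does \emph{not} produce the bound you claim. Write $R=\norm[a,h]{\uech}$, $A=\norm[a,h]{\uech-\uech[n-1]}$, $B=\norm[a,h]{\uewh}$, $E=h^{k+1}+\tau$. From your Lipschitz step the nonlinear term is $\lesssim (R+h^{k+1})\norm{\ech-\ech[n-1]}$; any Young split gives
\[
\tfrac{\mu}{2}\norm{\ech-\ech[n-1]}^2+\tfrac{1}{2\mu}(R+h^{k+1})^2.
\]
The first piece, combined with the auxiliary identity, is $\le \tfrac{\mu\tau}{2}(C_0E+B)A$, which carries a factor $\tau$ and is indeed absorbable for fixed $\mu$ and $\tau\lesssim\gamma^2$. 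But the second piece contributes $\tfrac{1}{2\mu}R^2$ with \emph{no} factor of $\tau$; summed over $n$ and fed to Lemma~\ref{lem:disc.gronwall} this forces $\chi^n\sim(\tau\gamma^2)^{-1}$, so $\tau\chi^n\sim\gamma^{-2}$ and the hypothesis $\tau\chi^n<1$ fails. Trying to recover the $\tau$ on $R^2$ by taking $\mu\sim\tau^{-1}$ turns the first piece into $\tfrac12(C_0E+B)A$, and now the contribution $\tfrac12 BA$ cannot be split so that the $B^2$ part is absorbed in $2\tau B^2$ while the $A^2$ part stays below $\gamma^2A^2$. In short, your route unavoidably produces either $R^2$ or $B^2$ (or $BA$) with an $O(1)$ coefficient.

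The paper avoids this by testing~\eqref{eq:err.eq:1} not with $\uech-\uech[n-1]$ but with a hybrid lift $\uzh$ of $Q^n\eqbydef\Phi'(c_h^n)-\Phi'(c^n)$ (element DOFs $\lproj[T]{k+1}Q^n$, face DOFs $\lproj[F]{k}\avg{Q^n}$), so that $(Q^n,\dt\ech)=(z_h,\dt\ech)=\mathcal{E}(\uzh)-a_h(\uewh,\uzh)$ exactly. This yields $|\term_2|\lesssim(E+B)\norm[1,h]{\uzh}$, and the crux becomes the estimate $\norm[1,h]{\uzh}\lesssim R+h^{k+1}$ (Proposition~\ref{prop:bnd.norm1hzh}), whose proof needs the discrete Gagliardo--Nirenberg--Poincar\'e inequality of Lemma~\ref{lem:gnp} and the $L^\infty$ a priori bound. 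One then gets, for any $\epsilon>0$, $|\term_2|\le C_\epsilon\big(R^2+E^2\big)+\epsilon B^2$ in the per-$\tau$ equation; after multiplying by $\tau$ every piece carries the required $\tau$ and Gronwall closes.
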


\begin{proof}
  Let $1\le n\le N$.
  Subtracting~\eqref{eq:err.eq:2} with $\upsi=\dt\uech$ from~\eqref{eq:err.eq:1} with $\uphi=\uewh$, we obtain
  \begin{equation}\label{eq:err.eq:basic}
    \norm[a,h]{\uewh}^2 + \gamma^2 a_h(\uech,\dt\uech)
    = \mathcal{E}(\uewh) + (\Phi'(c^n)-\Phi'(c_h^n),\dt\ech)
    \eqbydef\term_1 + \term_2.
  \end{equation}
  We proceed to bound the terms in the right-hand side.

  \begin{asparaenum}[(i)]
  \item \emph{Bound for $\term_1$.}
    Let $\uphi\in\Uh$.
    Adding to~\eqref{eq:calE} the quantity $$(\ud_t c^n-\LAPL w^n,\varphi_h)+(\dt\lproj{k+1}c^n-\dt c^n,\varphi_h)=0,$$ (use~\eqref{eq:strong:1} to prove that the first addend is 0 and the definition of the $L^2$-orthogonal projector $\lproj{k+1}$ to prove that the second is also 0), we can decompose $\mathcal{E}(\uphi)$ as follows:
    $$
    \begin{aligned}
      \mathcal{E}(\uphi)
      &= (\ud_t c^n - \dt c^n,\varphi_h)
      +(\dt(\lproj{k+1} c^n-\hchn),\varphi_h)
      - \left( a_h(\uhwhn,\uphi) + (\LAPL w^n,\varphi_h) \right)
      \\
      &\eqbydef\term_{1,1}+\term_{1,2}+\term_{1,3}.
    \end{aligned}
    $$
    For the first term, we have
    \begin{equation}\label{eq:err.est:T1:T11}
      |\term_{1,1}|
      \le\norm{\ud_t c^n - \dt c^n}\norm{\varphi_h}    
      \lesssim\tau\norm[{C^2([0,\tF];L^2(\Omega))}]{c}\norm[1,h]{\uphi}
      \lesssim\tau\norm[1,h]{\uphi},
    \end{equation}
    where we have used the Cauchy--Schwarz inequality, a classical estimate based on Taylor's remainder, Poincar\'e's inequality~\eqref{eq:friedrichs} with $r=2$, and we have concluded using the regularity~\eqref{eq:reg} for $c$.
    For the second term, on the other hand, using the Cauchy--Schwarz inequality followed by~\eqref{eq:approx.uhchn} together with the $C^1$-stability of the backward differencing operator~\eqref{eq:ddt}, Poincar\'e's inequality, and the regularity~\eqref{eq:reg} for $c$, we readily obtain
    \begin{equation}\label{eq:err.est:T1:T12}
      |\term_{1,2}|
      \le\norm{\dt(\lproj{k+1} c^n-\hchn)}\norm{\varphi_h}
      \lesssim h^{k+1}\norm[{C^1([0,\tF];H^{k+2}(\Omega))}]{c^n}\norm{\varphi_h}
      \lesssim h^{k+1}\norm[1,h]{\uphi}.
    \end{equation}
    Finally, recalling the consistency properties~\eqref{eq:cons.ah} of $a_h$, we get for the last term
    \begin{equation}\label{eq:err.est:T1:T13}
      \begin{aligned}
        |\term_{1,3}|
        &\lesssim h^{k+1}\norm[H^{k+2}(\Omega)]{w^n}\norm[1,h]{\uphi}
        \le h^{k+1}\norm[{C^0([0,\tF];H^{k+2}(\Omega))}]{w}\norm[1,h]{\uphi}
        \\
        &\lesssim h^{k+1}\norm[1,h]{\uphi}.
      \end{aligned}
    \end{equation}
    Collecting the bounds~\eqref{eq:err.est:T1:T11}--\eqref{eq:err.est:T1:T13}, it is inferred that
    \begin{equation}\label{eq:est.$}
      \$\eqbydef\sup_{\uphi\in\Uh,\norm[1,h]{\uphi}=1}\mathcal{E}(\uphi)\lesssim h^{k+1} + \tau,
    \end{equation}
    so that, for any real $\epsilon>0$, denoting by $C_\epsilon>0$ a real depending on $\epsilon$ but not on $h$ or $\tau$, and using the second inequality in~\eqref{eq:norm1h.ah} to bound $\norm[1,h]{\uewh}\lesssim\norm[a,h]{\uewh}$,
    \begin{equation}\label{eq:err.est:T1}
      |\term_1|
      \le\$\norm[1,h]{\uewh}
      \lesssim (h^{k+1} + \tau)\norm[1,h]{\uewh}
      \le C_\epsilon(h^{k+1} + \tau)^2 + \epsilon\norm[a,h]{\uewh}^2.
    \end{equation}
    
  \item \emph{Bound for $\term_2$.}
    Set, for the sake of brevity, $Q^n\eqbydef\Phi'(c_h^n) - \Phi'(c^n)$, and define the DOF vector $\uzh\in\Uh$ such that
    \begin{equation}\label{eq:def.zh}
      z_T=\lproj[T]{k+1}Q^n\quad\forall T\in\Th,\qquad
      z_F=\lproj[F]{k}\avg{Q^n}\quad\forall F\in\Fhi,\qquad
      z_F=\lproj[F]{k} z_{T_F}\quad\forall F\in\Fhb
    \end{equation}
    where $\avg{{\cdot}}$ denotes the usual average operator such that, for any function $\varphi$ admitting a possibly two-valued trace on $F\in\Fh[T_1]\cap\Fh[T_2]$, $\avg{\varphi}\eqbydef\frac12(\restrto{\varphi}{T_1}+\restrto{\varphi}{T_2})$, while, for a boundary face $F\in\Fhb$, $T_F$ denotes the unique element in $\Th$ such that $F\in\Fh[T_F]$.
    We have, using the definition of $\lproj[T]{k+1}$ followed by~\eqref{eq:err.eq:1} with $\uphi=\uzh$,~\eqref{eq:est.$}, and the second inequality in~\eqref{eq:norm1h.ah},
    \begin{equation}\label{eq:term2.basic}
      \term_2 = (z_h,\dt\ech)
      = \mathcal{E}(\uzh) - a_h(\uewh,\uzh)
      \lesssim\left(\$ + \norm[a,h]{\uewh}\right)\norm[1,h]{\uzh}.
    \end{equation}
    By Proposition~\ref{prop:bnd.norm1hzh} below,
    \begin{equation}\label{eq:bnd.norm1hzh}
      \norm[1,h]{\uzh}\lesssim\norm[a,h]{\uech} + h^{k+1},
    \end{equation}
    hence, for any real $\epsilon>0$, denoting by $C_\epsilon>0$ a real number depending on $\epsilon$ but not on $h$ or $\tau$, and recalling the bound~\eqref{eq:est.$} for $\$$,
    \begin{equation}\label{eq:err.est:T2}
      |\term_2|\le C_\epsilon\left(\norm[a,h]{\uech}^2 + (h^{k+1} + \tau)^2\right) + \epsilon\norm[a,h]{\uewh}^2.
    \end{equation}
    
  \item \emph{Conclusion.}
    Using~\eqref{eq:err.est:T1} and~\eqref{eq:err.est:T2} with $\epsilon=\frac14$ to bound the right-hand side of~\eqref{eq:err.eq:basic}, it is inferred
    $$
    \norm[a,h]{\uewh}^2 + \gamma^2 a_h(\uech,\dt\uech)
    \lesssim (h^{k+1} + \tau)^2 + \norm[a,h]{\uech}^2.
    $$
    Multiplying by $\tau$, summing over $1\le n\le N$, using~\eqref{eq:magic:BE} for the second term in the left-hand side, and recalling that, by definition, $\uech[0]=\underline{0}$, we get
    $$
    \gamma^2\norm[a,h]{\uech[N]}^2 + \sum_{n=1}^N\tau\norm[a,h]{\uewh}^2\le \sum_{n=1}^NC\tau\norm[a,h]{\uech}^2 + C(h^{k+1} + \tau)^2,
    $$
    with $C>0$ independent of $h$ and $\tau$.
    The error estimate~\eqref{eq:err.est} then follows from an application of the discrete Gronwall's inequality~\eqref{eq:disc.gronwall} with $\delta=\tau$, $a^n=\gamma^2\norm[a,h]{\uech}^2$, $b^n=\norm[a,h]{\uewh}^2$, $\chi^n=C$, and $G=C(h^{k+1}+\tau)^2$ assuming $\tau$ small enough.
  \end{asparaenum}
\end{proof}

\begin{remark}[BDF2 time discretization]\label{rem:bdf2}
  In Section~\ref{sec:num.tests}, we have also used a BDF2 scheme to march in time, which corresponds to the backward differencing operator
  $$
  \bdf\varphi\eqbydef\frac{3\varphi^{n+2}-4\varphi^{n+1}+\varphi^n}{2\tau},
  $$
  used in place of~\eqref{eq:ddt}.
  The analysis is essentially analogous to the backward Euler scheme, the main difference being that formula~\eqref{eq:magic:BE} is replaced by
  $$
  2x (3x-4y+z) = x^{2} - y^{2} + (2x-y)^{2} - (2y-z)^{2} + (x-2y+z)^{2}.
  $$
  As a result, the right-hand side of~\eqref{eq:err.est} scales as $(h^{k+1}+\tau^2)$ instead of $(h^{k+1}+\tau)$.
\end{remark}

To prove the bound~\eqref{eq:bnd.norm1hzh}, we need discrete counterparts of the following Gagliardo--Nirenberg--Poincar\'{e}'s inequalities valid for $p\in[2,+\infty)$ if $d=2$, $p\in[2,6]$ if $d=3$, and all \mbox{$v\in H^2(\Omega)\cap L^2_0(\Omega)$}:
\begin{equation}\label{eq:gnp.cont}
  \seminorm[W^{1,p}(\Omega)]{v}\lesssim\norm{v}^{1-\alpha}\seminorm[H^2(\Omega)]{v}^{\alpha}
  \lesssim\seminorm[H^1(\Omega)]{v}^{1-\alpha}\seminorm[H^2(\Omega)]{v}^{\alpha},
  \qquad\alpha\eqbydef\frac12 + \frac{d}2\left(\frac12-\frac1p\right),
\end{equation}
where the first bound follows from~\cite[Theorem~3]{Adams.Fournier:77} and the second from Poincar\'{e}'s inequality.
The proof of the following Lemma will be given in Appendix~\ref{sec:proofs}.
\begin{lemma}[Discrete Gagliardo--Nirenberg--Poincar\'{e}'s inequalities]\label{lem:gnp}
  Under the assumptions of Lemma~\ref{lem:agmon}, it holds for $p\in [2,+\infty)$ if $d=2$, $p\in[2,6]$ if $d=3$ with $C>0$ independent of $h$ and $\alpha$ defined as in \eqref{eq:gnp.cont},
  \begin{equation}\label{eq:gnp}
    \forall \uvh\in\UhO,\qquad    
    \norm[L^p(\Omega)^d]{\GRADh v_h}\le C\norm[1,h]{\uvh}^{1-\alpha}\norm[0,h]{\uLh\uvh}^{\alpha}.
  \end{equation}  
\end{lemma}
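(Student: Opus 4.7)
The plan is to adapt the dG-based argument of \cite[Lemma~2.2]{Kay.Styles.ea:09} to the hybrid setting, replacing the standard nodal interpolator (ill-defined on polyhedral meshes) by the $L^2$-orthogonal projector and relying on the $W^{s,p}$-stability/approximation properties~\eqref{eq:lproj.stab}--\eqref{eq:lproj.approx}.

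First I would introduce a smooth lift $\tilde v\in H^2(\Omega)\cap L^2_0(\Omega)$ as the unique zero-mean solution of the Neumann problem
\begin{equation*}
-\LAPL\tilde v=\Lh\uvh\ \text{in }\Omega,\qquad\partial_{\normal}\tilde v=0\ \text{on }\partial\Omega,
\end{equation*}
which is well-posed because $\Lh\uvh$ has zero mean thanks to Remark~\ref{rem:restr.uLh}. Convexity of $\Omega$ and standard elliptic regularity then yield $\seminorm[H^2(\Omega)]{\tilde v}\lesssim\norm{\Lh\uvh}\le\norm[0,h]{\uLh\uvh}$.

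Second, I would show $\seminorm[H^1(\Omega)]{\tilde v}\lesssim\norm[1,h]{\uvh}$. Testing the auxiliary PDE against $\tilde v$ and using that $\Lh\uvh\in\Poly{k+1}(\Th)$ gives $\seminorm[H^1(\Omega)]{\tilde v}^2=(\Lh\uvh,\lproj{k+1}\tilde v)$, which, via the definition~\eqref{eq:uLh} of $\uLh$ with test DOF vector $\Ih\tilde v$, rewrites as $-a_h(\uvh,\Ih\tilde v)-s_{0,h}(\uLh\uvh,\Ih\tilde v)$. Boundedness of $a_h$~\eqref{eq:norm1h.ah}, the $H^1$-stability~\eqref{eq:Ih.stab} of $\Ih$, the bound $\seminorm[0,h]{\Ih\tilde v}\lesssim h\,\seminorm[H^1(\Omega)]{\tilde v}$ derived from~\eqref{eq:lproj.approx}, together with the absorption of the residual $h\norm[0,h]{\uLh\uvh}$ into $\norm[1,h]{\uvh}$ via the polynomial inverse estimate $h\norm[0,h]{\uLh\uvh}\lesssim\norm[1,h]{\uvh}$ (obtained from mesh quasi-uniformity~\eqref{eq:qu} combined with $\norm[1,h]{\uwh}\lesssim h^{-1}\norm[0,h]{\uwh}$ on polynomial DOFs), conclude the step. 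Applying the continuous inequality~\eqref{eq:gnp.cont} to $\tilde v$ then delivers $\norm[L^p(\Omega)^d]{\GRAD\tilde v}\lesssim\norm[1,h]{\uvh}^{1-\alpha}\norm[0,h]{\uLh\uvh}^{\alpha}$.

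Third, I would close the argument by comparing $\GRADh v_h$ with $\GRAD\tilde v$ in $L^p(\Omega)^d$ through $\lproj{k+1}\tilde v\in\Poly{k+1}(\Th)$: the continuous-to-projection gap $\GRAD\tilde v-\GRADh\lproj{k+1}\tilde v$ is controlled by~\eqref{eq:lproj.approx} together with the Sobolev embedding $H^2(\Omega)\hookrightarrow W^{1,p}(\Omega)$ on the admissible range of $p$, while the discrete-to-projection gap $\GRADh(v_h-\lproj{k+1}\tilde v)$ is a broken polynomial on $\Th$ and can be estimated by first invoking the global inverse inequality~\eqref{eq:glob.inv} to reduce it to an $L^2$-bound, itself tackled by HHO error-analysis arguments applied to the auxiliary Poisson problem that links $v_h$ and $\tilde v$ through the common right-hand side $\Lh\uvh$. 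The hard part is precisely this last comparison step: the $L^p$-to-$L^2$ transition via~\eqref{eq:glob.inv} produces a factor $h^{d/p-d/2}$, and matching it against the HHO error estimate so that the resulting exponents reproduce exactly $\alpha=\tfrac12+\tfrac{d}{2}(\tfrac12-\tfrac1p)$ requires a judicious choice of the comparison function and the full strength of the $W^{s,p}$-stability/approximation machinery for $\lproj[T]{k+1}$ and $\lproj[F]{k}$ from~\cite{Di-Pietro.Droniou:15}, which is exactly what enables the Kay--Styles strategy to be transplanted from simplicial conforming meshes to the polyhedral HHO setting.
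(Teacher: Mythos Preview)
Your approach coincides with the paper's in all essential respects: set $\tilde v=\G(\Lh\uvh)$, control $\seminorm[H^1(\Omega)]{\tilde v}$ and $\seminorm[H^2(\Omega)]{\tilde v}$, apply the continuous inequality~\eqref{eq:gnp.cont}, then compare $v_h$ with $\lproj{k+1}\tilde v$ via the global inverse inequality~\eqref{eq:glob.inv} and the HHO error estimate for the auxiliary Poisson problem. Your step~2 is in fact a direct derivation of the $H^{-1}$ bound~\eqref{eq:est.H-1.uLh}, and your ``discrete-to-projection'' gap is precisely the paper's term $\term_2$, handled via the Green's-operator estimate~\eqref{eq:stab.green} (which is the ``HHO error-analysis argument'' you allude to) followed by~\eqref{eq:est.L2.uLh}.

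One point needs tightening: your handling of the ``continuous-to-projection'' gap $\GRAD\tilde v-\GRADh\lproj{k+1}\tilde v$ via the Sobolev embedding $H^2(\Omega)\hookrightarrow W^{1,p}(\Omega)$ yields only the bound $\lesssim\norm[H^2(\Omega)]{\tilde v}\lesssim\norm[0,h]{\uLh\uvh}$, and this is \emph{not} dominated by $\norm[1,h]{\uvh}^{1-\alpha}\norm[0,h]{\uLh\uvh}^{\alpha}$ when $\alpha<1$. The fix is immediate: by the $W^{1,p}$-stability~\eqref{eq:lproj.stab} (equivalently~\eqref{eq:lproj.approx} with $s=m=1$) one has $\norm[L^p(\Omega)^d]{\GRAD\tilde v-\GRADh\lproj{k+1}\tilde v}\lesssim\seminorm[W^{1,p}(\Omega)]{\tilde v}$, and this right-hand side is exactly what your step~3 already bounds via~\eqref{eq:gnp.cont}; the Sobolev embedding is superfluous here. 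The paper streamlines this by merging your step~3 and the first gap into a single term $\norm[L^p(\Omega)^d]{\GRADh\lproj{k+1}\tilde v}\lesssim\seminorm[W^{1,p}(\Omega)]{\tilde v}$ via~\eqref{eq:lproj.stab}, applying~\eqref{eq:gnp.cont} once, and splitting only into the two pieces $\term_1=\norm[L^p(\Omega)^d]{\GRADh\lproj{k+1}\tilde v}$ and $\term_2=\norm[L^p(\Omega)^d]{\GRADh(v_h-\lproj{k+1}\tilde v)}$.
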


\begin{proposition}[{Bound on $\norm[1,h]{\uzh}$}]\label{prop:bnd.norm1hzh}
  With $\uzh$ defined as in~\eqref{eq:def.zh}, the bound~\eqref{eq:bnd.norm1hzh} holds.
\end{proposition}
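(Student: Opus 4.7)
My plan is to bound both pieces of $\norm[1,h]{\uzh}^2=\norm{\GRADh z_h}^2+\seminorm[1,h]{\uzh}^2$ by repeatedly exploiting the splitting
$c_h^n-c^n = \ech + (\hchn-\lproj{k+1}c^n) + (\lproj{k+1}c^n-c^n)$,
in which the first summand contributes $\norm[a,h]{\uech}$ because $\uech\in\UhO$, the second is $O(h^{k+1})$ because $\uhchn-\Ih c^n\in\UhO$ with $\norm[1,h]{\uhchn-\Ih c^n}\lesssim h^{k+1}$ by~\eqref{eq:approx.uhchn}, and the third is a classical polynomial projection error. The uniform $L^\infty$-bound $\norm[L^\infty(\Omega)]{c_h^n}\lesssim 1$ from Lemma~\ref{lem:a-priori}, together with $c^n\in L^\infty$ derived from~\eqref{eq:reg} via Sobolev embedding, will be used throughout to control the coefficients coming from differences of $\Phi'$.

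\emph{Broken gradient.} The elementwise $W^{1,2}$-stability~\eqref{eq:lproj.stab} of $\lproj[T]{k+1}$ yields $\norm[T]{\GRAD z_T}\lesssim\seminorm[H^1(T)]{Q^n}$. Starting from
$$
\GRADh Q^n = [\Phi''(c_h^n)-\Phi''(c^n)]\,\GRADh c_h^n + \Phi''(c^n)\,\GRADh(c_h^n-c^n),
$$
the identity $\Phi''(c_h^n)-\Phi''(c^n)=3(c_h^n+c^n)(c_h^n-c^n)$, H\"older's inequality, and the $L^\infty$-bounds give elementwise
$$
\norm[T]{\GRAD Q^n}\lesssim\norm[L^4(T)]{c_h^n-c^n}\,\norm[L^4(T)^d]{\GRAD c_h^n}+\norm[T]{\GRADh(c_h^n-c^n)}.
$$
Summing via Cauchy--Schwarz, the factor $\norm[L^4(\Omega)]{c_h^n-c^n}\lesssim\norm[a,h]{\uech}+h^{k+1}$ is obtained from Friedrichs' inequality~\eqref{eq:friedrichs} with $r=4$ applied to the first two summands of the splitting (both in $\UhO$) and from~\eqref{eq:lproj.approx} with $(s,p)=(k+1,4)$ on the third (using $H^{k+2}\hookrightarrow W^{k+1,4}$ in $d\in\{2,3\}$); $\norm[L^4(\Omega)^d]{\GRADh c_h^n}\lesssim 1$ follows from the discrete Gagliardo--Nirenberg--Poincar\'e inequality~\eqref{eq:gnp} with $p=4$ applied to $\uchn\in\UhO$, using the a~priori bounds of Lemma~\ref{lem:a-priori}; finally $\norm{\GRADh(c_h^n-c^n)}\lesssim\norm[a,h]{\uech}+h^{k+1}$ is obtained from the same splitting together with~\eqref{eq:lproj.approx} at $(m,s,p)=(1,k+2,2)$. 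Thus $\norm{\GRADh z_h}\lesssim\norm[a,h]{\uech}+h^{k+1}$.

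\emph{Stabilization seminorm.} Boundary faces produce nothing since $\lproj[F]{k}(z_F-z_{T_F})=0$ by the very definition of $z_F$ on $\Fhb$. On an interior face $F=\partial T_1\cap\partial T_2$,
$$
\lproj[F]{k}(z_F-z_T)=\tfrac12\lproj[F]{k}\jump{Q^n}+\lproj[F]{k}(Q^n|_T-\lproj[T]{k+1}Q^n),
$$
and the projection-error term is dominated by $\seminorm[H^1(T)]{Q^n}^2$ via~\eqref{eq:lproj.approx} at $(m,s,p)=(0,1,2)$, summing to the bound already obtained above. For the jump, continuity of $c^n$ yields $\jump{Q^n}=\jump{\Phi'(c_h^n)}=[(c_h^n|_{T_2})^2+c_h^n|_{T_1}c_h^n|_{T_2}+(c_h^n|_{T_1})^2-1]\jump{c_h^n}$ with uniformly $L^\infty$-bounded prefactor, and $\jump{c_h^n}=\jump{\ech}+\jump{\hchn-\lproj{k+1}c^n}+\jump{\lproj{k+1}c^n-c^n}$. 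The third contribution sums to $O(h^{2(k+1)})$ by~\eqref{eq:lproj.approx}; the first two are absorbed through the auxiliary estimate
$$
\sum_{F\in\Fhi}h_F^{-1}\norm[F]{\jump{v_h}}^2\lesssim\norm[1,h]{\uvh}^2\qquad\forall\uvh\in\Uh,
$$
which one proves by splitting $\jump{v_h}=(v_{T_2}|_F-v_F)-(v_{T_1}|_F-v_F)$, using the orthogonal decomposition $\norm[F]{v_T-v_F}^2=\norm[F]{\lproj[F]{k}(v_T-v_F)}^2+\norm[F]{(I-\lproj[F]{k})v_T}^2$ (exploiting $v_F\in\Poly{k}(F)$), and controlling the high-order part by~\eqref{eq:lprojF.approx} combined with the discrete trace~\eqref{eq:trace.disc} and inverse inequality~\eqref{eq:inv} applied to $\GRAD v_T\in\Poly{k}(T)^d$. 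Taking $\uvh=\uech$ and $\uvh=\uhchn-\Ih c^n$ in turn closes the jump estimate with $\norm[a,h]{\uech}^2+h^{2(k+1)}$.

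\emph{Main obstacle.} The delicate point is that $\norm[1,h]{\uzh}$ must depend \emph{linearly} on $\norm[a,h]{\uech}$ and not merely be bounded: contracting the full $L^\infty$-bound of $c_h^n-c^n$ against $\GRAD c_h^n$ in $L^2$ only gives an $O(1)$ term and would destroy the Gronwall closure of Theorem~\ref{thm:err.est}. The remedy is the $L^4\times L^4$ H\"older split above, in which Friedrichs' inequality in $\UhO$ delivers the linear factor $\norm[a,h]{\uech}$ from $\norm[L^4]{c_h^n-c^n}$ while the discrete Gagliardo--Nirenberg--Poincar\'e estimate~\eqref{eq:gnp} (which is why quasi-uniformity is needed here) keeps $\norm[L^4]{\GRADh c_h^n}$ uniformly bounded independently of the error.
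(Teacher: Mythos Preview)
Your proof is correct and follows essentially the same strategy as the paper: split $\norm[1,h]{\uzh}^2$ into gradient and stabilization parts, control $\GRADh Q^n$ via the $L^\infty$-bounds on $c_h^n$ and $c^n$ combined with a Gagliardo--Nirenberg--Poincar\'e estimate on $\GRADh c_h^n$, and handle the stabilization through the jump-seminorm bound~\eqref{eq:bnd.seminormJ} together with the three-term splitting of $c_h^n-c^n$. The only notable difference is in the H\"older pairing: you write $\GRADh Q^n=[\Phi''(c_h^n)-\Phi''(c^n)]\GRADh c_h^n+\Phi''(c^n)\GRADh(c_h^n-c^n)$ and use an $L^4\times L^4$ split with the discrete GNP at $p=4$, whereas the paper uses the product-rule decomposition $Q^n=q^n(c_h^n-c^n)$ and an $L^6\times L^\infty\times L^3$ split invoking both the continuous and discrete GNP at $p=3$; your variant is slightly leaner since $\GRAD c^n$ never enters the higher-integrability factor and the continuous inequality~\eqref{eq:gnp.cont} is not needed.
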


\begin{proof}
  Recalling the definition~\eqref{eq:norm1h} of the $\norm[1,h]{{\cdot}}$-norm, one has
  \begin{equation}\label{eq:bnd.norm1hzh:basic}
    \norm[1,h]{\uzh}^2 = \norm{\GRADh\lproj{k+1} Q^n}^2
    + \sum_{T\in\Th}\sum_{F\in\Fh[T]\cap\Fhi}h_F^{-1}\norm[F]{\lproj[F]{k}(\avg{Q^n} - \lproj[T]{k+1}Q^n)}^2\eqbydef\term_1^2 + \term_2^2.
  \end{equation}
  \begin{asparaenum}[(i)]
  \item \emph{Bound for $\term_1$.}
    Using the $H^1$-stability~\eqref{eq:lproj.stab} of $\lproj{k+1}$, formula~\eqref{eq:magic.Phi'} to infer $Q^n=q^n(c_h^n-c^n)$ with $q^n\eqbydef (c_h^n)^2 + c_h^nc^n + (c^n)^2 - 1$, the triangle and H\"{o}lder inequalities, we get, for all $T\in\Th$,
    $$
    \begin{aligned}
      |\term_1|
      &\lesssim\norm{\GRADh Q^n}
      \le\norm{q^n\GRADh(c_h^n - c^n)} + \norm{(c_h^n-c^n)\GRADh q^n}
      \\
      &\lesssim
      \left(
      \norm[L^\infty(\Omega)]{c_h^n}^2 + \norm[L^\infty(\Omega)]{c^n}^2 + 1
      \right)\norm{\GRADh(c_h^n-c^n)}
      \\
      &\quad
      + \norm[L^6(\Omega)]{c_h^n-c^n}\left(\norm[L^\infty(\Omega)]{c_h^n} + \norm[L^\infty(\Omega)]{c^n}\right)
      \left(\norm[L^3(\Omega)^d]{\GRADh c_h^n} + \norm[L^3(\Omega)^d]{\GRAD c^n}\right).
    \end{aligned}
    $$
    Noting the a priori bound~\eqref{eq:a-priori:iv} and the regularity assumption~\eqref{eq:reg}, both $\norm[L^\infty(\Omega)]{c_h^n}$ and $\norm[L^\infty(\Omega)]{c^n}$ are $\lesssim 1$.
    Additionally, by the continuous Gagliardo--Nirenberg--Poincar\'{e}'s inequality~\eqref{eq:gnp.cont} with $p=3$ and the regularity assumption~\eqref{eq:reg}, one has with $\alpha=\nicefrac12+\nicefrac{d}{12}$,
    $\norm[L^3(\Omega)^d]{\GRAD c^n}\lesssim\seminorm[H^1(\Omega)]{c^n}^{1-\alpha}\norm[H^2(\Omega)]{c^n}^{\alpha}\lesssim 1$.
    Similarly, the discrete Gagliardo--Nirenberg--Poincar\'{e}'s inequality~\eqref{eq:gnp} with $p=3$ combined with the a priori bounds~\eqref{eq:a-priori:i} and~\eqref{eq:a-priori:iv} yields $\norm[L^3(\Omega)^d]{\GRAD_h c_h^n}\lesssim\norm[1,h]{\uchn}^{1-\alpha}\norm[0,h]{\uLh\uchn}^{\alpha}\lesssim 1$.
    Then, inserting $\pm(\hchn-\lproj{k+1}c^n)$ and using the triangle inequality,
    \begin{equation}\label{eq:need.k+1}
      \begin{aligned}
        |\term_1|
        &\lesssim\left(\norm{\GRADh\ech}+\norm[L^6(\Omega)]{\ech}\right)
        +\left(\norm{\GRADh(\hchn-\lproj{k+1}c^n)}+\norm[L^6(\Omega)]{\hchn-\lproj{k+1}c^n}\right)
        \\
        &\quad +\left(\norm{\GRADh(\lproj{k+1}c^n-c^n)}+\norm[L^6(\Omega)]{\lproj{k+1}c^n-c^n}\right)
        \eqbydef\term_{1,1} + \term_{1,2} + \term_{1,3}.
      \end{aligned}
    \end{equation}
    Using the discrete Friedrichs' inequality~\eqref{eq:friedrichs} with $r=6$ together with the definition~\eqref{eq:norm1h} of the $\norm[1,h]{{\cdot}}$-norm and the first inequality in~\eqref{eq:norm1h.ah}, it is readily inferred that $\term_{1,1}\lesssim\norm[a,h]{\uech}$.
    Again the Friedrichs' inequality~\eqref{eq:friedrichs} with $r=6$ followed by the approximation properties~\eqref{eq:approx.uhchn} of $\uhchn$ and the regularity~\eqref{eq:reg} yields $\term_{2,2}\lesssim h^{k+1}\norm[H^{k+2}(\Omega)]{c^n}\lesssim h^{k+1}$.
    Finally, using the approximation properties~\eqref{eq:lproj.approx} of $\lproj{k+1}$, we have $\term_{1,3}\lesssim h^{k+1}(\norm[H^{k+2}(\Omega)]{c^n} + \norm[W^{k+1,6}(\Omega)]{c^n})\lesssim h^{k+1}$, where we have used the fact that $H^{k+2}(\Omega)\subset W^{k+1,6}(\Omega)$ for all $k\ge 0$ and $d\in\{2,3\}$ on domains satisfying the cone condition  (cf.~\cite[Theorem~4.12]{Adams:03}).
    Gathering the previous bounds, we conclude that
    \begin{equation}\label{eq:bnd.norm1hzh:T1}
      |\term_1|\lesssim\norm[a,h]{\uech} + h^{k+1}.
    \end{equation}
    
  \item \emph{Bound for $\term_2$.} For all interface $F\in\Fh[T_1]\cap\Fh[T_2]$, we denote by $\jump{{\cdot}}$ the usual jump operator such that, for every function $\varphi$ with a possibly two-valued trace on $F$, $\jump{\varphi}\eqbydef\restrto{\varphi}{T_1}-\restrto{\varphi}{T_2}$ (the orientation is irrelevant).
    Let an element $T\in\Th$ and an interface face $F\in\Fh[T]\cap\Fh[T^+]$ be fixed.
    Using the $L^2$-stability of $\lproj[F]{k}$, inserting $\pm Q_T^n$ (with $Q_T^n\eqbydef\restrto{Q^n}{T}$), and using the triangle inequality it holds,
    \begin{equation}\label{eq:bnd.norm1hzh:1}
      \begin{aligned}
        \norm[F]{\lproj[F]{k}(\avg{Q^n} - \lproj[T]{k+1} Q_T^n)}
        &\le \norm[F]{\avg{Q^n} - \lproj[T]{k+1} Q_T^n}
        \\
        &\le \frac12\norm[F]{\jump{Q^n}} + \norm[F]{Q_T^n-\lproj[T]{k+1}Q_T^n}
        \\
        &\lesssim \norm[F]{\jump{Q^n}} + h_T^{\frac12}\norm[T]{\GRAD Q_T^n},
      \end{aligned}
    \end{equation}
    where we have used~\eqref{eq:lproj.approx} for the last term.
    Let us bound the first term in the right-hand side.
    Observing that $\jump{\Phi'(c^n)}=0$ and recalling~\eqref{eq:magic.Phi'}, it is inferred
    $$
    |\jump{Q^n}|
    =|\jump{\Phi'(c_h^n)}|
    \le|\jump{c_h^n}|\left(|c_{T}|^2+|c_{T}||c_{T^+}|+|c_{T^+}|^2+1\right).
    $$
    Using this relation, and noticing the a priori bound~\eqref{eq:a-priori:iv}, we get
    $$
    \norm[F]{\jump{Q^n}}\lesssim\left(\norm[L^\infty(\Omega)]{c_h^n}^2+1\right)\norm[F]{\jump{c_h^n}}
    \lesssim\norm[F]{\jump{c_h^n}} = \norm[F]{\jump{c_h^n-c^n}},
    $$
    where the conclusion follows observing that $c^n$ has zero jumps across interfaces.
    Inserting $\pm\jump{\hchn-\lproj{k+1}c^n}$ inside the norm and using the triangle inequality, we obtain
    \begin{equation}\label{eq:bnd.jumpQn}
      \norm[F]{\jump{Q^n}}
      \lesssim\norm[F]{\jump{c_h^n-\hchn}} + \norm[F]{\jump{\hchn-\lproj{k+1}c^n}} + \norm[F]{\jump{\lproj{k+1} c^n - c^n}}.
    \end{equation}
    
    Define on $H^1(\Th)$ the jump seminorm $\seminorm[\rm J]{v}^2\eqbydef\sum_{F\in\Fhi}h_F^{-1}\norm[F]{\jump{v}}^2$.
    Let us prove that
    \begin{equation}\label{eq:bnd.seminormJ}
      \forall\uvh\in\Uh,\qquad
      \seminorm[\rm J]{v_h}\lesssim\norm[1,h]{\uvh}\lesssim\norm[a,h]{\uvh}.
    \end{equation}
    Inserting $\pm(\lproj[F]{k}\jump{v_h}-v_F)$ and using the triangle inequality, it is inferred that
    $$
    \seminorm[\rm J]{v_h}^2
    \lesssim\sum_{F\in\Fhi}\sum_{T\in\Th[F]} h_F^{-1}\left(
    \norm[F]{v_T-\lproj[F]{k}v_T}^2 + \norm[F]{\lproj[F]{k}(v_T-v_F)}^2
    \right)
    \lesssim\norm{\GRADh v_h}^2 + \seminorm[1,h]{\uvh}^2,
    $$
    where we have used~\eqref{eq:lprojF.approx} followed by the discrete trace inequality~\eqref{eq:trace.disc} and the fact that $\card{\Fh[T]}\lesssim 1$ by mesh regularity for the first term, and the definition~\eqref{eq:norm1h} of the $\seminorm[1,h]{{\cdot}}$-seminorm for the second term.
    This proves the first bound in~\eqref{eq:bnd.seminormJ}. 
    The second bound follows from~\eqref{eq:norm1h.ah}.
    
    Multiplying~\eqref{eq:bnd.norm1hzh:1} by $h_F^{-\frac12}$, squaring, summing over $F\in\Fh[T]\cap\Fhi$ then over $T\in\Th$, using mesh regularity to infer that $\card{\Fh[T]}$ is bounded uniformly in $h$, and noticing~\eqref{eq:bnd.jumpQn} yields
    \begin{equation}\label{eq:need.k+1'}
      \begin{aligned}
        \term_2^2
        &\lesssim\norm{\GRADh Q^n}^2
        + \seminorm[\rm J]{c_h^n-\hchn}^2
        + \seminorm[\rm J]{\hchn-\lproj{k+1}c^n}^2
        + \seminorm[\rm J]{\lproj{k+1} c^n - c^n}^2
        \\
        &\lesssim\norm{\GRADh Q^n}^2 + \norm[a,h]{\uech}^2 + \norm[a,h]{\uhchn-\Ih c^n}^2 + \seminorm[\rm J]{\lproj{k+1} c^n - c^n}^2
        \\
        &\lesssim\norm{\GRADh Q^n}^2 + \norm[a,h]{\uech}^2 + \left(h^{k+1}\norm[H^{k+2}(\Omega)]{c^n}\right)^2,
      \end{aligned}
    \end{equation}
    where we have used~\eqref{eq:bnd.seminormJ} to pass to the second line and the approximation properties~\eqref{eq:approx.uhchn} of $\uhchn$ and~\eqref{eq:lproj.approx} of $\lproj{k+1}$ to conclude.
    Proceeding as in point (i) to bound the first term in the right-hand side of~\eqref{eq:need.k+1'}, and recalling the regularity assumptions~\eqref{eq:reg} on $c$, we conclude
    \begin{equation}\label{eq:bnd.norm1hzh:T2}
      |\term_2|\le\norm[a,h]{\uech} + h^{k+1}.
    \end{equation}
    
  \item \emph{Conclusion.} Using~\eqref{eq:bnd.norm1hzh:T1} and~\eqref{eq:bnd.norm1hzh:T2} in~\eqref{eq:bnd.norm1hzh:basic}, the estimate~\eqref{eq:bnd.norm1hzh} follows.
  \end{asparaenum}
\end{proof}

\begin{remark}[Polynomial degree for element DOFs]\label{rem:need.k+1}
  The use of polynomials of degree $(k+1)$ (instead of $k$) as elements DOFs in the discrete space~\eqref{eq:Uh} is required to infer an estimate of order $h^{k+1}$ in~\eqref{eq:need.k+1} and for the last term in~\eqref{eq:need.k+1'}.
\end{remark}


\section{Numerical results}\label{sec:num.tests}
\pgfqkeys{/pgfplots}{ cycle list name = black white }
In this section we provide numerical evidence to confirm the theoretical results. 

\subsection{Convergence}

\begin{figure}
  \centering
  \includegraphics[height=3.5cm]{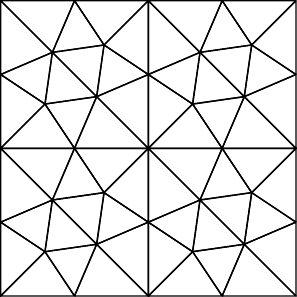}
  \hspace{0.25cm}
  \includegraphics[height=3.5cm]{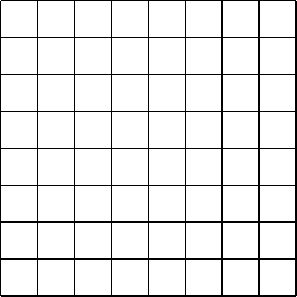}
  \hspace{0.25cm}
  \includegraphics[height=3.5cm]{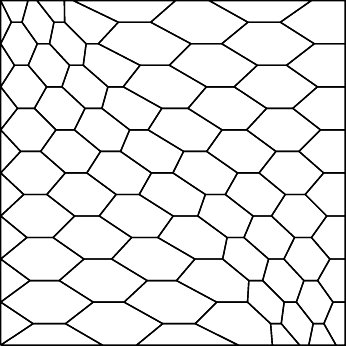}  
  \caption{Mesh families for the numerical tests\label{mesh_families}}
\end{figure}

We start by a non-physical numerical test that demonstrates the orders of convergence achieved by our method.
We solve the Cahn-Hilliard problem~\eqref{eq:err.eq} on the unit square $\Omega = (0,1)^2$ with $t_F = 1$, order-parameter
\begin{equation}
  \label{eq:cis_cv}
  c(\vec{x},t) = t \cos(\pi x_1) \cos(\pi x_2),
\end{equation}
and chemical potential $w$ inferred from $c$ according to~\eqref{eq:strong:2}.
The right-hand side of~\eqref{eq:strong:1} is also modified by introducing a nonzero source in accordance with the expression of $c$.
The interface parameter $\gamma$ is taken equal to 1.

We consider the triangular, Cartesian, and (predominantly) hexagonal mesh families of Figure~\ref{mesh_families}.
The two former mesh families were introduced in the FVCA5 benchmark~\cite{Herbin.Hubert:08}, whereas the latter was introduced in~\cite{Di-Pietro.Lemaire:15}.
To march in time, we use the implicit Euler scheme.
Since the order-parameter is linear in time, only the spatial component of the discretization error is nonzero and the choice of the time step is irrelevant.
The energy errors $\norm[a,h]{\uchn[N]-\Ih c^N}$ and $\norm[a,h]{\uwhn[N] - \Ih w^N}$ at final time are depicted in Figure~\ref{tab_cv:en}.
For all mesh families, the convergence rate is $(k+1)$, in accordance with Theorem~\ref{thm:err.est}.
For the sake of completeness, we also display in Figure~\ref{tab_cv:L2} the $L^2$-errors $\norm{c_h^n - \lproj{k+1} c^n}$ and $\norm{w_h^n - \lproj{k+1} w^n}$, for which an optimal convergence rate of $(k+2)$ is observed.


\begin{figure}
  \begin{minipage}[b]{\textwidth}
    \begin{tikzpicture}[scale=0.50]
      \begin{loglogaxis}[
          xmin = 0.001,
          legend style = {
            legend pos = south west 
          }
        ]
        \addplot table[x=meshsize,y={create col/linear regression={y=err_ch_en}}] {ch_0_mesh1.dat}
        coordinate [pos=0.75] (A)
        coordinate [pos=1.00] (B);
        \xdef\slopea{\pgfplotstableregressiona}
        \draw (A) -| (B) node[pos=0.75,anchor=east] {\pgfmathprintnumber{\slopea}};
        \addplot table[x=meshsize,y={create col/linear regression={y=err_ch_en}}] {ch_1_mesh1.dat}
        coordinate [pos=0.75] (A)
        coordinate [pos=1.00] (B);
        \xdef\slopeb{\pgfplotstableregressiona}
        \draw (A) -| (B) node[pos=0.75,anchor=south east] {\pgfmathprintnumber{\slopeb}};
        \addplot table[x=meshsize,y={create col/linear regression={y=err_ch_en}}] {ch_2_mesh1.dat}
        coordinate [pos=0.75] (A)
        coordinate [pos=1.00] (B);
        \xdef\slopec{\pgfplotstableregressiona}
        \draw (A) -| (B) node[pos=0.75,anchor=east] {\pgfmathprintnumber{\slopec}};
        \legend{$k=0$,$k=1$,$k=2$};
      \end{loglogaxis}
    \end{tikzpicture}
    \hfill
    \begin{tikzpicture}[scale=0.50]
      \begin{loglogaxis}[
          xmin = 0.002,
          legend style = {
            legend pos = south west
          }
        ]
        \addplot table[x=meshsize,y={create col/linear regression={y=err_ch_en}}] {ch_0_mesh2.dat}
        coordinate [pos=0.75] (A)
        coordinate [pos=1.00] (B);
        \xdef\slopea{\pgfplotstableregressiona}
        \draw (A) -| (B) node[pos=0.75,anchor=east] {\pgfmathprintnumber{\slopea}};
        \addplot table[x=meshsize,y={create col/linear regression={y=err_ch_en}}] {ch_1_mesh2.dat}
        coordinate [pos=0.75] (A)
        coordinate [pos=1.00] (B);
        \xdef\slopeb{\pgfplotstableregressiona}
        \draw (A) -| (B) node[pos=0.75,anchor=south east] {\pgfmathprintnumber{\slopeb}};
        \addplot table[x=meshsize,y={create col/linear regression={y=err_ch_en}}] {ch_2_mesh2.dat}
        coordinate [pos=0.75] (A)
        coordinate [pos=1.00] (B);
        \xdef\slopec{\pgfplotstableregressiona}
        \draw (A) -| (B) node[pos=0.75,anchor=east] {\pgfmathprintnumber{\slopec}};
        \legend{$k=0$,$k=1$,$k=2$};
      \end{loglogaxis}
    \end{tikzpicture}
    \hfill
    \begin{tikzpicture}[scale=0.50]
      \begin{loglogaxis}[
          xmin = 0.0025,
          legend style = { 
            legend pos = south west
          }
        ]
        \addplot table[x=meshsize,y={create col/linear regression={y=err_ch_en}}] {ch_0_pi6_tiltedhexagonal.dat}
        coordinate [pos=0.75] (A)
        coordinate [pos=1.00] (B);
        \xdef\slopea{\pgfplotstableregressiona}
        \draw (A) -| (B) node[pos=0.75,anchor=east] {\pgfmathprintnumber{\slopea}};
        \addplot table[x=meshsize,y={create col/linear regression={y=err_ch_en}}] {ch_1_pi6_tiltedhexagonal.dat}
        coordinate [pos=0.75] (A)
        coordinate [pos=1.00] (B);
        \xdef\slopeb{\pgfplotstableregressiona}
        \draw (A) -| (B) node[pos=0.75,anchor=south east] {\pgfmathprintnumber{\slopeb}};
        \addplot table[x=meshsize,y={create col/linear regression={y=err_ch_en}}] {ch_2_pi6_tiltedhexagonal.dat}
        coordinate [pos=0.75] (A)
        coordinate [pos=1.00] (B);
        \xdef\slopec{\pgfplotstableregressiona}
        \draw (A) -| (B) node[pos=0.75,anchor=east] {\pgfmathprintnumber{\slopec}};
        \legend{$k=0$,$k=1$,$k=2$};
      \end{loglogaxis}
    \end{tikzpicture}
    \subcaption{$\norm[a,h]{\uchn[N] - \Ih c^N}$ vs. $h$}
  \end{minipage}
  \\
  \vfill

  \begin{minipage}[b]{\textwidth}
    \begin{tikzpicture}[scale=0.50]
      \begin{loglogaxis}[
          xmin = 0.001,
          legend style = {
            legend pos = south west
          }
        ]
        \addplot table[x=meshsize,y={create col/linear regression={y=err_wh_en}}] {ch_0_mesh1.dat}
        coordinate [pos=0.75] (A)
        coordinate [pos=1.00] (B);
        \xdef\slopea{\pgfplotstableregressiona}
        \draw (A) -| (B) node[pos=0.75,anchor=east] {\pgfmathprintnumber{\slopea}};
        \addplot table[x=meshsize,y={create col/linear regression={y=err_wh_en}}] {ch_1_mesh1.dat}
        coordinate [pos=0.75] (A)
        coordinate [pos=1.00] (B);
        \xdef\slopeb{\pgfplotstableregressiona}
        \draw (A) -| (B) node[pos=0.75,anchor=south east] {\pgfmathprintnumber{\slopeb}};
        \addplot table[x=meshsize,y={create col/linear regression={y=err_wh_en}}] {ch_2_mesh1.dat}
        coordinate [pos=0.75] (A)
        coordinate [pos=1.00] (B);
        \xdef\slopec{\pgfplotstableregressiona}
        \draw (A) -| (B) node[pos=0.75,anchor=east] {\pgfmathprintnumber{\slopec}};
        \legend{$k=0$,$k=1$,$k=2$};
      \end{loglogaxis}
    \end{tikzpicture}
    \hfill
    \begin{tikzpicture}[scale=0.50]
      \begin{loglogaxis}[
          xmin = 0.002,
          legend style = {
            legend pos = south west
          }
        ]
        \addplot table[x=meshsize,y={create col/linear regression={y=err_wh_en}}] {ch_0_mesh2.dat}
        coordinate [pos=0.75] (A)
        coordinate [pos=1.00] (B);
        \xdef\slopea{\pgfplotstableregressiona}
        \draw (A) -| (B) node[pos=0.75,anchor=east] {\pgfmathprintnumber{\slopea}};
        \addplot table[x=meshsize,y={create col/linear regression={y=err_wh_en}}] {ch_1_mesh2.dat}
        coordinate [pos=0.75] (A)
        coordinate [pos=1.00] (B);
        \xdef\slopeb{\pgfplotstableregressiona}
        \draw (A) -| (B) node[pos=0.75,anchor=south east] {\pgfmathprintnumber{\slopeb}};
        \addplot table[x=meshsize,y={create col/linear regression={y=err_wh_en}}] {ch_2_mesh2.dat}
        coordinate [pos=0.75] (A)
        coordinate [pos=1.00] (B);
        \xdef\slopec{\pgfplotstableregressiona}
        \draw (A) -| (B) node[pos=0.75,anchor=east] {\pgfmathprintnumber{\slopec}};
        \legend{$k=0$,$k=1$,$k=2$};
      \end{loglogaxis}
    \end{tikzpicture}
    \hfill
    \begin{tikzpicture}[scale=0.50]
      \begin{loglogaxis}[
          xmin = 0.0025,
          legend style = {
            legend pos = south west
          }
        ]
        \addplot table[x=meshsize,y={create col/linear regression={y=err_wh_en}}] {ch_0_pi6_tiltedhexagonal.dat}
        coordinate [pos=0.75] (A)
        coordinate [pos=1.00] (B);
        \xdef\slopea{\pgfplotstableregressiona}
        \draw (A) -| (B) node[pos=0.75,anchor=east] {\pgfmathprintnumber{\slopea}};
        \addplot table[x=meshsize,y={create col/linear regression={y=err_wh_en}}] {ch_1_pi6_tiltedhexagonal.dat}
        coordinate [pos=0.75] (A)
        coordinate [pos=1.00] (B);
        \xdef\slopeb{\pgfplotstableregressiona}
        \draw (A) -| (B) node[pos=0.75,anchor=south east] {\pgfmathprintnumber{\slopeb}};
        \addplot table[x=meshsize,y={create col/linear regression={y=err_wh_en}}] {ch_2_pi6_tiltedhexagonal.dat}
        coordinate [pos=0.75] (A)
        coordinate [pos=1.00] (B);
        \xdef\slopec{\pgfplotstableregressiona}
        \draw (A) -| (B) node[pos=0.75,anchor=east] {\pgfmathprintnumber{\slopec}};
        \legend{$k=0$,$k=1$,$k=2$};
      \end{loglogaxis}
    \end{tikzpicture}
    \subcaption{$\norm[a,h]{\uwhn[N]-\Ih w^N}$ vs. $h$}
  \end{minipage}
  \caption{Energy-errors at final time vs. $h$. From left to right: triangular, Cartesian and (predominantly) hexagonal mesh families; cf. Figure~\ref{mesh_families}.\label{tab_cv:en}}
\end{figure}


\begin{figure}
  \begin{minipage}[b]{\textwidth}
    \begin{tikzpicture}[scale=0.50]
      \begin{loglogaxis}[
          xmin = 0.001,
          legend style = {
            legend pos = south west 
          }
        ]
        \addplot table[x=meshsize,y={create col/linear regression={y=err_ch_L2}}] {ch_0_mesh1.dat}
        coordinate [pos=0.75] (A)
        coordinate [pos=1.00] (B);
        \xdef\slopea{\pgfplotstableregressiona}
        \draw (A) -| (B) node[pos=0.75,anchor=east] {\pgfmathprintnumber{\slopea}};
        \addplot table[x=meshsize,y={create col/linear regression={y=err_ch_L2}}] {ch_1_mesh1.dat}
        coordinate [pos=0.75] (A)
        coordinate [pos=1.00] (B);
        \xdef\slopeb{\pgfplotstableregressiona}
        \draw (A) -| (B) node[pos=0.75,anchor=south east,above=10pt] {\pgfmathprintnumber{\slopeb}};
        \addplot table[x=meshsize,y={create col/linear regression={y=err_ch_L2}}] {ch_2_mesh1.dat}
        coordinate [pos=0.75] (A)
        coordinate [pos=1.00] (B);
        \xdef\slopec{\pgfplotstableregressiona}
        \draw (A) -| (B) node[pos=0.75,anchor=east] {\pgfmathprintnumber{\slopec}};
        \legend{$k=0$,$k=1$,$k=2$};
      \end{loglogaxis}
    \end{tikzpicture}
    \hfill
    \begin{tikzpicture}[scale=0.50]
      \begin{loglogaxis}[
          xmin = 0.002,
          legend style = {
            legend pos = south west
          }
        ]
        \addplot table[x=meshsize,y={create col/linear regression={y=err_ch_L2}}] {ch_0_mesh2.dat}
        coordinate [pos=0.75] (A)
        coordinate [pos=1.00] (B);
        \xdef\slopea{\pgfplotstableregressiona}
        \draw (A) -| (B) node[pos=0.75,anchor=east] {\pgfmathprintnumber{\slopea}};
        \addplot table[x=meshsize,y={create col/linear regression={y=err_ch_L2}}] {ch_1_mesh2.dat}
        coordinate [pos=0.75] (A)
        coordinate [pos=1.00] (B);
        \xdef\slopeb{\pgfplotstableregressiona}
        \draw (A) -| (B) node[pos=0.75,anchor=south east,above=10pt] {\pgfmathprintnumber{\slopeb}};
        \addplot table[x=meshsize,y={create col/linear regression={y=err_ch_L2}}] {ch_2_mesh2.dat}
        coordinate [pos=0.75] (A)
        coordinate [pos=1.00] (B);
        \xdef\slopec{\pgfplotstableregressiona}
        \draw (A) -| (B) node[pos=0.75,anchor=east] {\pgfmathprintnumber{\slopec}};
        \legend{$k=0$,$k=1$,$k=2$};
      \end{loglogaxis}
    \end{tikzpicture}
    \hfill
    \begin{tikzpicture}[scale=0.50]
      \begin{loglogaxis}[
          xmin = 0.0025,
          legend style = { 
            legend pos = south west
          }
        ]
        \addplot table[x=meshsize,y={create col/linear regression={y=err_ch_L2}}] {ch_0_pi6_tiltedhexagonal.dat}
        coordinate [pos=0.75] (A)
        coordinate [pos=1.00] (B);
        \xdef\slopea{\pgfplotstableregressiona}
        \draw (A) -| (B) node[pos=0.75,anchor=east] {\pgfmathprintnumber{\slopea}};
        \addplot table[x=meshsize,y={create col/linear regression={y=err_ch_L2}}] {ch_1_pi6_tiltedhexagonal.dat}
        coordinate [pos=0.75] (A)
        coordinate [pos=1.00] (B);
        \xdef\slopeb{\pgfplotstableregressiona}
        \draw (A) -| (B) node[pos=0.75,anchor=south east,above=10pt] {\pgfmathprintnumber{\slopeb}};
        \addplot table[x=meshsize,y={create col/linear regression={y=err_ch_L2}}] {ch_2_pi6_tiltedhexagonal.dat}
        coordinate [pos=0.75] (A)
        coordinate [pos=1.00] (B);
        \xdef\slopec{\pgfplotstableregressiona}
        \draw (A) -| (B) node[pos=0.75,anchor=east] {\pgfmathprintnumber{\slopec}};
        \legend{$k=0$,$k=1$,$k=2$};
      \end{loglogaxis}
    \end{tikzpicture}
    \subcaption{$\norm{c_h^N - \lproj{k+1} c^N}$ vs. $h$}
  \end{minipage}
  \\
  \vfill

  \begin{minipage}[b]{\textwidth}
    \begin{tikzpicture}[scale=0.50]
      \begin{loglogaxis}[
          xmin = 0.001,
          legend style = {
            legend pos = south west
          }
        ]
        \addplot table[x=meshsize,y={create col/linear regression={y=err_wh_L2}}] {ch_0_mesh1.dat}
        coordinate [pos=0.75] (A)
        coordinate [pos=1.00] (B);
        \xdef\slopea{\pgfplotstableregressiona}
        \draw (A) -| (B) node[pos=0.75,anchor=east] {\pgfmathprintnumber{\slopea}};
        \addplot table[x=meshsize,y={create col/linear regression={y=err_wh_L2}}] {ch_1_mesh1.dat}
        coordinate [pos=0.75] (A)
        coordinate [pos=1.00] (B);
        \xdef\slopeb{\pgfplotstableregressiona}
        \draw (A) -| (B) node[pos=0.75,anchor=south east,above=10pt] {\pgfmathprintnumber{\slopeb}};
        \addplot table[x=meshsize,y={create col/linear regression={y=err_wh_L2}}] {ch_2_mesh1.dat}
        coordinate [pos=0.75] (A)
        coordinate [pos=1.00] (B);
        \xdef\slopec{\pgfplotstableregressiona}
        \draw (A) -| (B) node[pos=0.75,anchor=east] {\pgfmathprintnumber{\slopec}};
        \legend{$k=0$,$k=1$,$k=2$};
      \end{loglogaxis}
    \end{tikzpicture}
    \hfill
    \begin{tikzpicture}[scale=0.50]
      \begin{loglogaxis}[
          xmin = 0.002,
          legend style = {
            legend pos = south west
          }
        ]
        \addplot table[x=meshsize,y={create col/linear regression={y=err_wh_L2}}] {ch_0_mesh2.dat}
        coordinate [pos=0.75] (A)
        coordinate [pos=1.00] (B);
        \xdef\slopea{\pgfplotstableregressiona}
        \draw (A) -| (B) node[pos=0.75,anchor=east] {\pgfmathprintnumber{\slopea}};
        \addplot table[x=meshsize,y={create col/linear regression={y=err_wh_L2}}] {ch_1_mesh2.dat}
        coordinate [pos=0.75] (A)
        coordinate [pos=1.00] (B);
        \xdef\slopeb{\pgfplotstableregressiona}
        \draw (A) -| (B) node[pos=0.75,anchor=south east,above=10pt] {\pgfmathprintnumber{\slopeb}};
        \addplot table[x=meshsize,y={create col/linear regression={y=err_wh_L2}}] {ch_2_mesh2.dat}
        coordinate [pos=0.75] (A)
        coordinate [pos=1.00] (B);
        \xdef\slopec{\pgfplotstableregressiona}
        \draw (A) -| (B) node[pos=0.75,anchor=east] {\pgfmathprintnumber{\slopec}};
        \legend{$k=0$,$k=1$,$k=2$};
      \end{loglogaxis}
    \end{tikzpicture}
    \hfill
    \begin{tikzpicture}[scale=0.50]
      \begin{loglogaxis}[
          xmin = 0.0025,
          legend style = {
            legend pos = south west
          }
        ]
        \addplot table[x=meshsize,y={create col/linear regression={y=err_wh_L2}}] {ch_0_pi6_tiltedhexagonal.dat}
        coordinate [pos=0.75] (A)
        coordinate [pos=1.00] (B);
        \xdef\slopea{\pgfplotstableregressiona}
        \draw (A) -| (B) node[pos=0.75,anchor=east] {\pgfmathprintnumber{\slopea}};
        \addplot table[x=meshsize,y={create col/linear regression={y=err_wh_L2}}] {ch_1_pi6_tiltedhexagonal.dat}
        coordinate [pos=0.75] (A)
        coordinate [pos=1.00] (B);
        \xdef\slopeb{\pgfplotstableregressiona}
        \draw (A) -| (B) node[pos=0.75,anchor=south east,above=10pt] {\pgfmathprintnumber{\slopeb}};
        \addplot table[x=meshsize,y={create col/linear regression={y=err_wh_L2}}] {ch_2_pi6_tiltedhexagonal.dat}
        coordinate [pos=0.75] (A)
        coordinate [pos=1.00] (B);
        \xdef\slopec{\pgfplotstableregressiona}
        \draw (A) -| (B) node[pos=0.75,anchor=east] {\pgfmathprintnumber{\slopec}};
        \legend{$k=0$,$k=1$,$k=2$};
      \end{loglogaxis}
    \end{tikzpicture}
    \subcaption{$\norm{w_h^N-\lproj{k+1} w^N}$ vs. $h$}
  \end{minipage}
  \caption{$L^2$-errors at final time vs. $h$. From left to right: triangular, Cartesian and (predominantly) hexagonal mesh families; cf. Figure~\ref{mesh_families}.\label{tab_cv:L2}}
\end{figure}

\subsection{Evolution of an elliptic and a cross-shaped interfaces\label{num_ellipse}}

The numerical examples of this section consist in tracking the evolution of initial data corresponding, respectively, to an elliptic and a cross-shaped interface between phases.
For the elliptic interface test case of Figure~\ref{DG_ellipse}, the initial datum is
$$
c_0(\vec{x}) =
\begin{cases}
  0.95  & \text{if $81\left(x_1-0.5\right)^2 + 9\left(x_2-0.5\right)^2 < 1$,} \\
  -0.95 &\text{otherwhise.}
\end{cases}
$$
For the cross-shaped interface test case of Figure~\ref{DG_cross}, we take
$$
c_0(\vec{x})
=\begin{cases}
0.95 & \begin{array}{ll}
  \text{if} & 5\left(
  |(x_2{-}0.5) -\frac{2}{5}(x_1{-}0.5)|
  + |\frac{2}{5}(x_1{-}0.5) + (x_2{-}0.5)|
  \right) < 1
  \\
  \text{or} & 5\left(
  |(x_1{-}0.5) - \frac{2}{5}(x_2{-}0.5)|
  + |\frac{2}{5}(x_2{-}0.5) + (x_1{-}0.5)|
  \right) < 1,
\end{array}
\\
-0.95 &\text{otherwhise.}
\end{cases}
$$

In both cases, the space domain is the unit square $\Omega=(0,1)^2$, and the interface parameter $\gamma$ is taken to be $\pgfmathprintnumber{1e-2}$.
We use a $64\times64$ uniform Cartesian mesh and $k=1$ with time step $\tau=\gamma^2/10$.

In the test case of Figure~\ref{DG_ellipse}, we observe evolution of the elliptic interface towards a circular interface and, as expected, mass is well preserved (+0.5\% with respect to the initial ellipse).
Similar considerations hold for the cross-shaped test case of Figure~\ref{DG_cross}, which has the additional difficulty of presenting sharp corners.

\begin{figure}
  \centering
  \includegraphics[height=3.5cm]{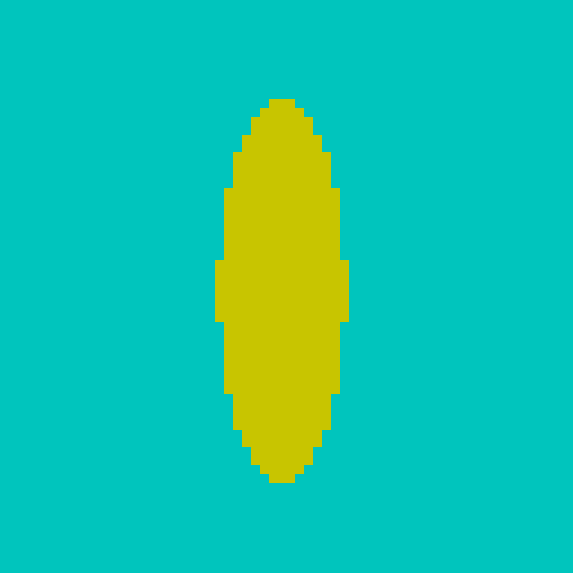}
  \hspace{0.25cm}
  \includegraphics[height=3.5cm]{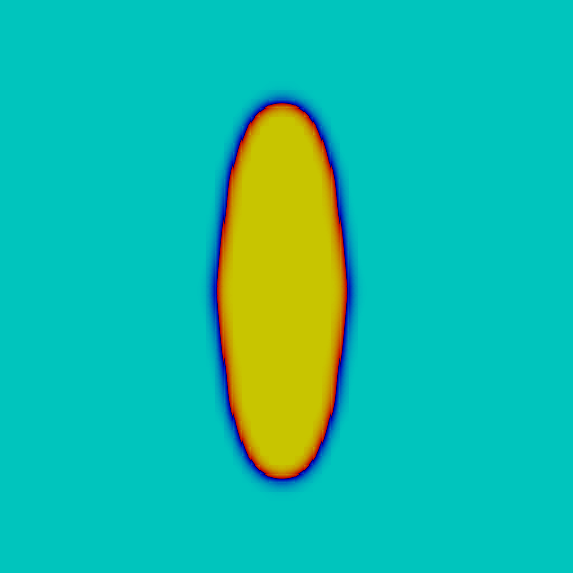}
  \\ \vspace{0.25cm}
  \includegraphics[height=3.5cm]{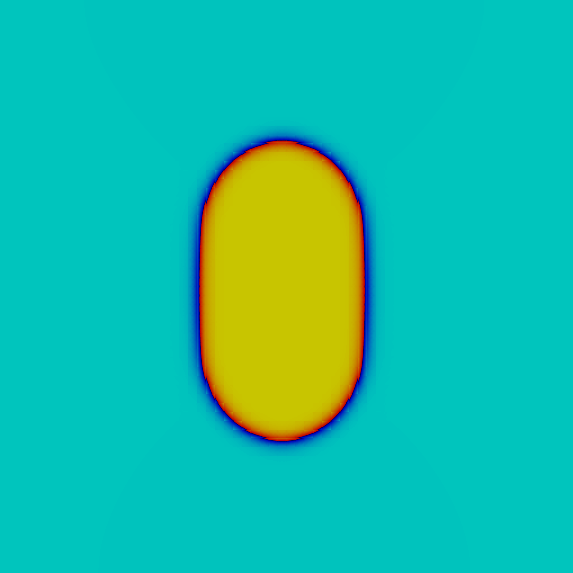}
  \hspace{0.25cm}
  \includegraphics[height=3.5cm]{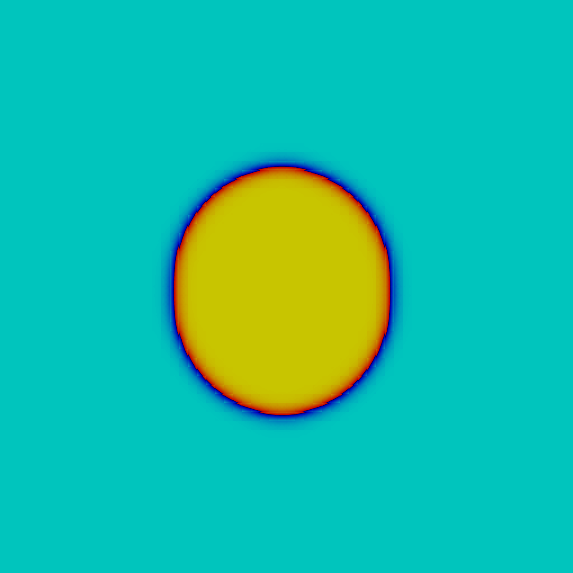}
  \caption{Evolution of an elliptic interface (left to right, top to bottom). Displayed times are 0 , $\pgfmathprintnumber{3e-3}$ , $\pgfmathprintnumber{0.3}$, 1.\label{DG_ellipse}}
\end{figure}

\begin{figure}
  \begin{center}
    \centering
    \includegraphics[height=3.5cm]{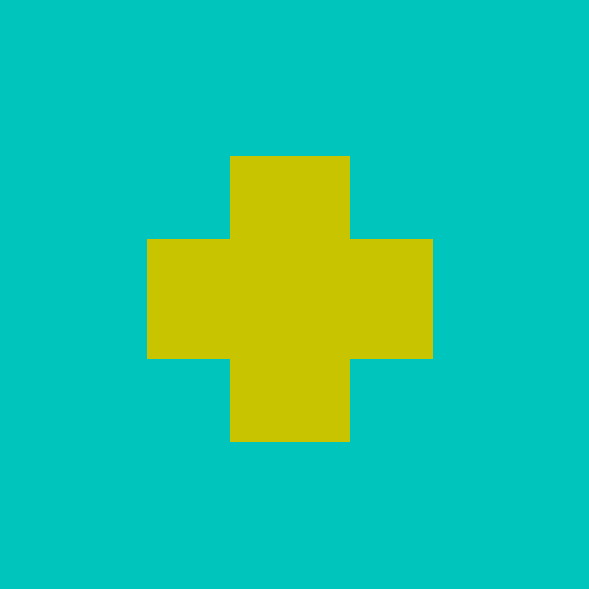}
    \hspace{0.25cm}
    \includegraphics[height=3.5cm]{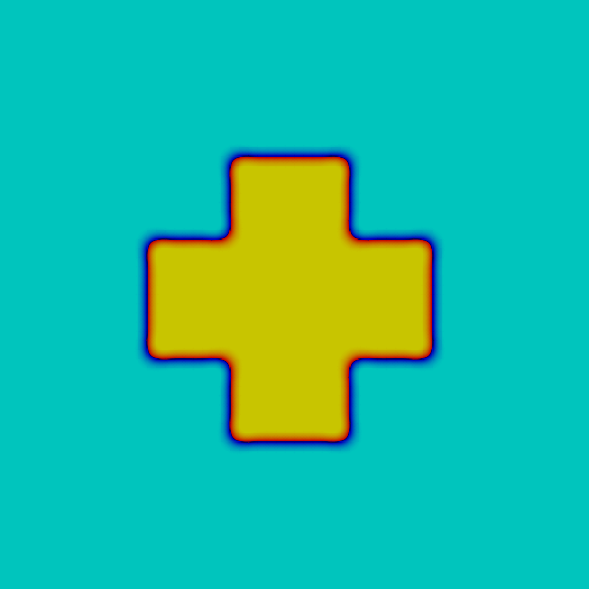}
    \\ \vspace{0.25cm}
    \includegraphics[height=3.5cm]{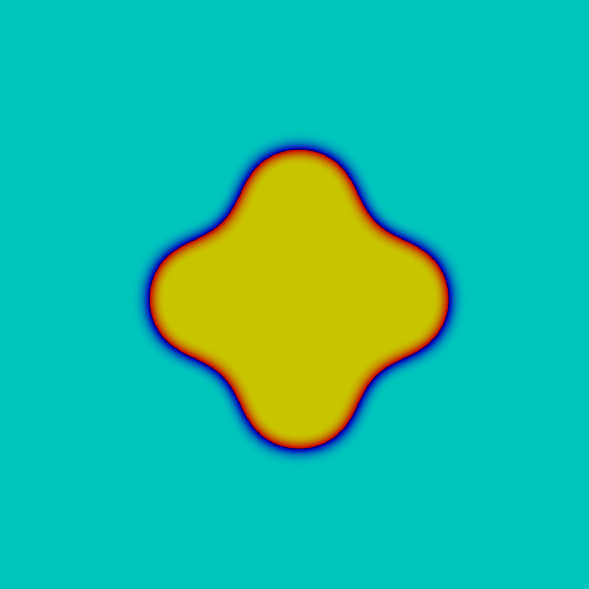}
    \hspace{0.25cm}
    \includegraphics[height=3.5cm]{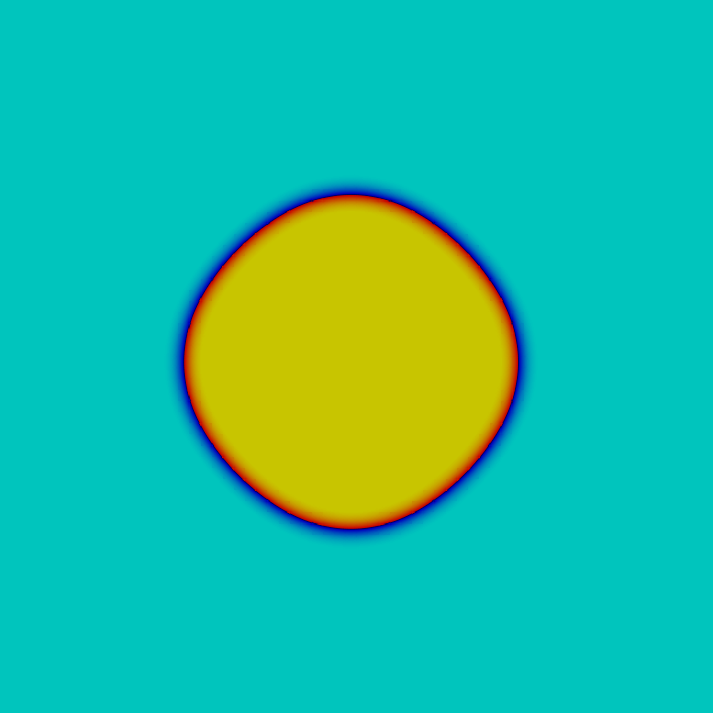}
  \end{center}
  \caption{Evolution of a cross-shaped interface (left to right, top to bottom). Displayed times are 0, $\pgfmathprintnumber{5e-5}$, $\pgfmathprintnumber{1e-2}$, $\pgfmathprintnumber{8.17e-2}$.\label{DG_cross}}
\end{figure}

\subsection{Spinodal decomposition\label{num_spinoid}}

Spinodal decomposition can be observed when a binary alloy is heated to a high temperature for a certain time and then abruptly cooled.
As a result, phases are separated in well-defined high concentration areas.
In Figure~\ref{fig:spinodal_decomp}, we display the numerical solutions obtained on a $128\times128$ uniform Cartesian mesh for $k=0$ and on a uniform $64\times64$ Cartesian mesh for $k=1$.
In both cases, we use the same initial conditions taking random values between -1 and 1 on a $32\times32$ uniform Cartesian partition of the domain.
The interface parameter is $\gamma =1/100$, and we take $\tau = \gamma^2/10$.
For $k=0$, the time discretisation is based on the Backward Euler scheme while, for $k=1$, we use the BDF2 formula to make sure that the spatial and temporal error contributions are equilibrated; cf. Remark~\ref{rem:bdf2}.

\begin{figure}
  \begin{minipage}[b]{\textwidth}\begin{center}
    \includegraphics[height=3.5cm]{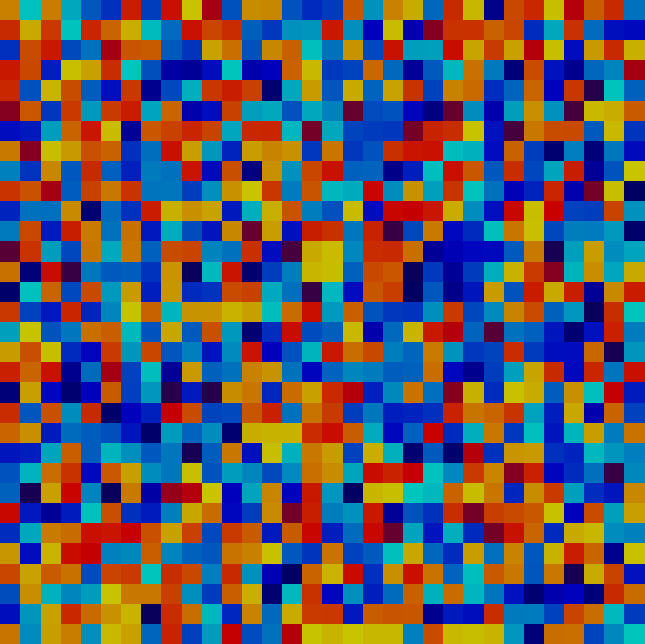}
    \hspace{0.25cm}
    \includegraphics[height=3.5cm]{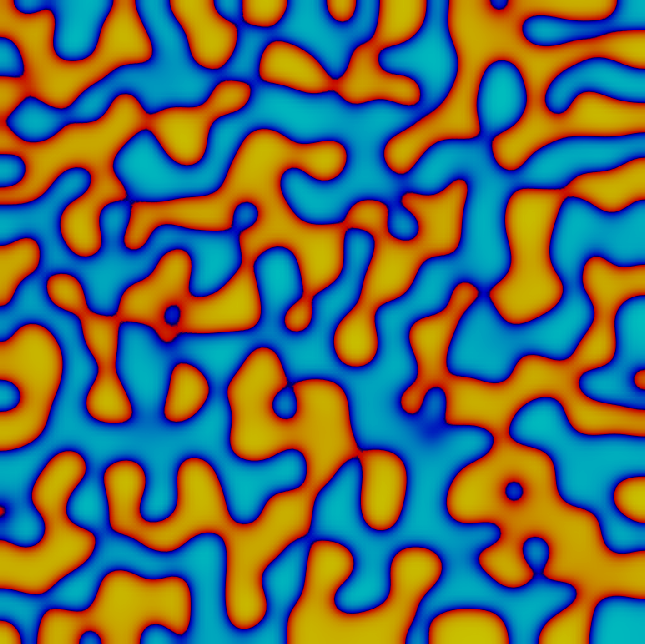}
    \\ \vspace{0.25cm}
    \includegraphics[height=3.5cm]{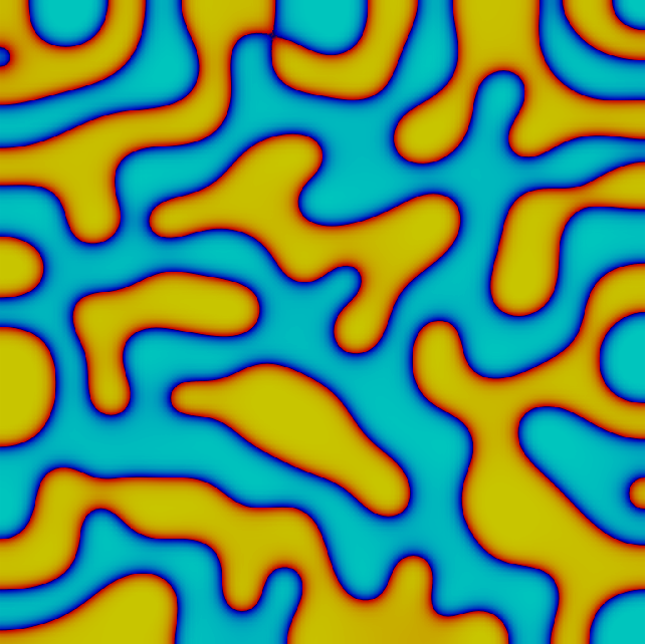}
    \hspace{0.25cm}
    \includegraphics[height=3.5cm]{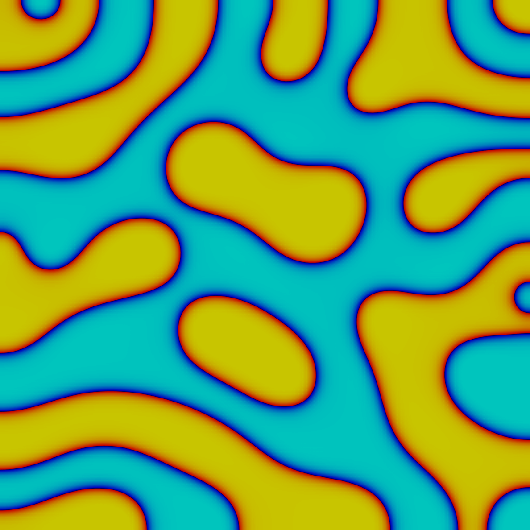}\end{center}
    \subcaption{$128\times128$ uniform Cartesian mesh, $k=0$, BE\label{fig:spinoidal_decomp:k=0}}
  \end{minipage}
  \vspace{0.125cm}\\
  \begin{minipage}[b]{\textwidth}\begin{center}
    \includegraphics[height=3.5cm]{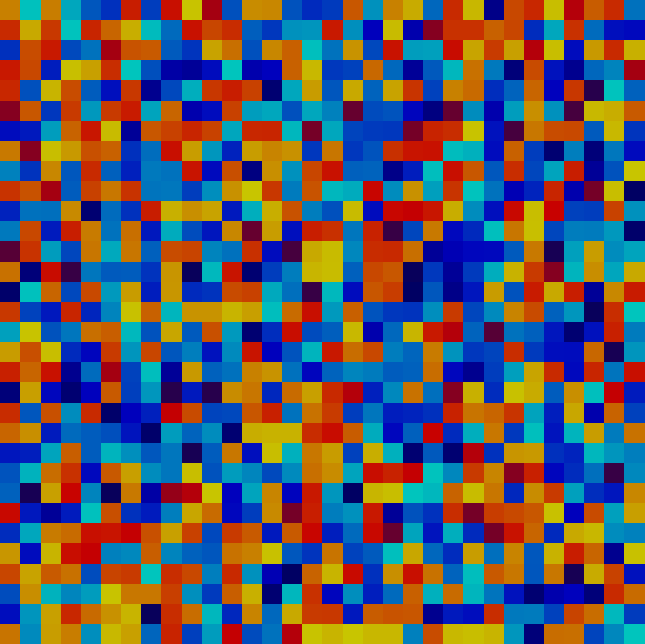}
    \hspace{0.25cm}
    \includegraphics[height=3.5cm]{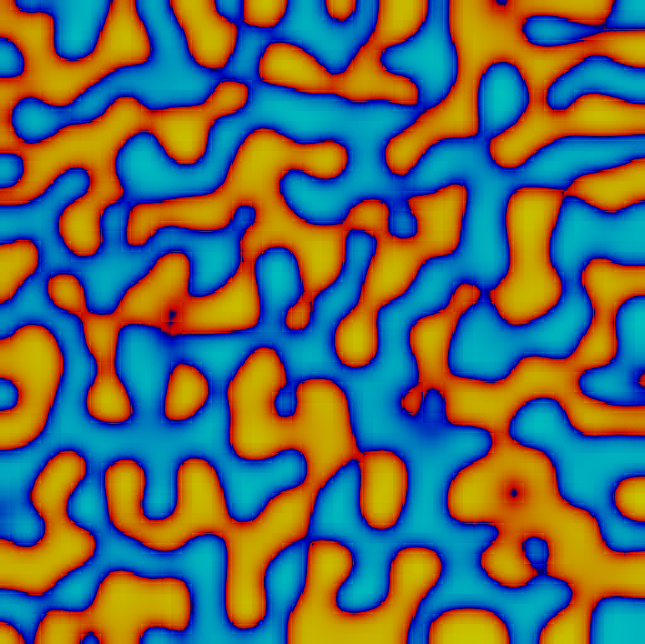}
    \\ \vspace{0.25cm}
    \includegraphics[height=3.5cm]{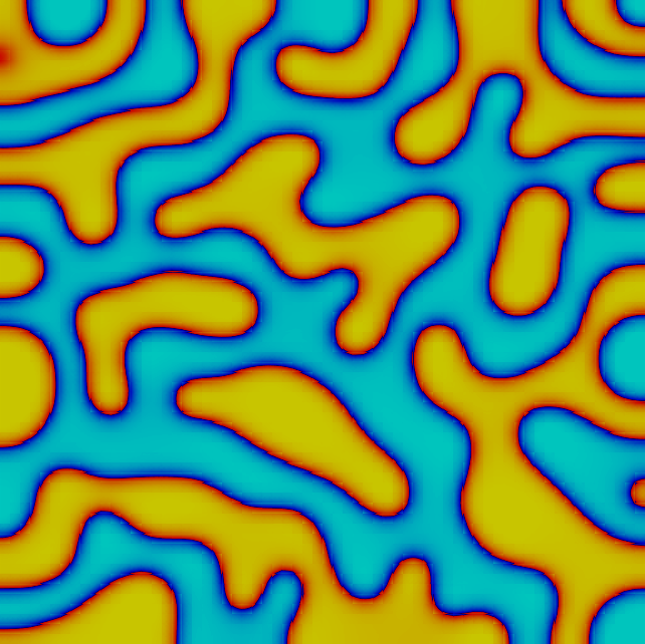}
    \hspace{0.25cm}
    \includegraphics[height=3.5cm]{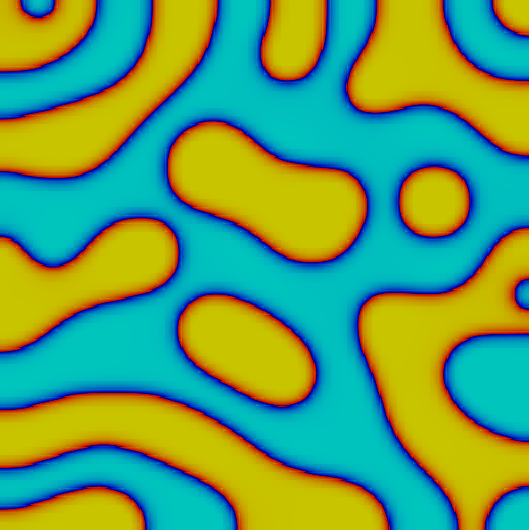}\end{center}
    \subcaption{$64\times64$ uniform Cartesian mesh, $k=1$, BDF2\label{fig:spinoidal_decomp:k=1}}
  \end{minipage}
  \caption{Spinoidal decomposition (left to right, top to bottom). In both cases, the same random initial condition is used. Displayed times are 0, $\pgfmathprintnumber{5e-5}$, $\pgfmathprintnumber{1.25e-3}$, $\pgfmathprintnumber{3.6e-2}$.\label{fig:spinodal_decomp}}
\end{figure}

The separation of the two components into two distinct phases happens over a very small time; see two leftmost panels of Figure \ref{fig:spinodal_decomp} corresponding to times $0$ and $\pgfmathprintnumber{5e-5}$, respectively.
Later, the phases gather increasingly slowly until the interfaces develop a constant curvature; see the two rightmost panels of Figure~\ref{fig:spinodal_decomp}, corresponding to times $\pgfmathprintnumber{1.25e-3}$ and $\pgfmathprintnumber{3.6e-2}$, respectively.
At the latest stages, we can observe that the solution exhibits a (small) dependence on the mesh and/or the polynomial degree, and the high-concentration regions in Figures~\ref{fig:spinoidal_decomp:k=0} and~\ref{fig:spinoidal_decomp:k=1} are highly superposable but not identical.

%
\appendix
\section{Proofs of discrete functional analysis results}\label{sec:proofs}

This section contains the proofs of Lemmas~\ref{lem:agmon} and~\ref{lem:gnp} preceeded by the required preliminary technical results.

\begin{proposition}[Estimates for $\uLh$]\label{prop:est.uLh}
  Assuming mesh quasi-uniformity~\eqref{eq:qu}, it holds
  \begin{alignat}{2}
    \label{eq:est.L2.uLh}
    \forall\uvh&\in\Uh,\qquad&\norm[0,h]{\uLh\uvh}\lesssim h^{-1}\norm[1,h]{\uvh},
    \\
    \label{eq:est.H-1.uLh}
    \forall \uvh&\in\UhO,\qquad&\norm[H^{-1}(\Omega)]{\Lh\uvh}\lesssim\norm[1,h]{\uvh}.
  \end{alignat}
\end{proposition}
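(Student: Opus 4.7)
The plan is to establish both estimates via standard inverse-estimate and duality techniques, relying on the quasi-uniformity hypothesis~\eqref{eq:qu}.

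For the first bound~\eqref{eq:est.L2.uLh}, I would take $\uzh=\uLh\uvh$ in the defining relation~\eqref{eq:uLh} to obtain $\norm[0,h]{\uLh\uvh}^2 = -a_h(\uvh,\uLh\uvh)$, then apply Cauchy--Schwarz with respect to the $\norm[a,h]{{\cdot}}$-seminorm and the norm equivalence~\eqref{eq:norm1h.ah} to get $\norm[0,h]{\uLh\uvh}^2 \lesssim \norm[1,h]{\uvh}\,\norm[1,h]{\uLh\uvh}$. The remaining task is a discrete inverse inequality of the form $\norm[1,h]{\uwh} \lesssim h^{-1}\norm[0,h]{\uwh}$ for all $\uwh\in\Uh$. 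For the $\norm{\GRADh w_h}$ component of $\norm[1,h]{\uwh}$ this follows element-by-element from the local inverse inequality~\eqref{eq:inv} together with quasi-uniformity ($h_T\gtrsim h$); for the stabilization seminorm $\seminorm[1,h]{\uwh}^2=\sum_{T,F}h_F^{-1}\norm[F]{\lproj[F]{k}(w_F-w_T)}^2$, one simply factors out $h_F^{-2}$ and observes that $h_F\gtrsim h$ by shape regularity combined with~\eqref{eq:qu}, so that $\seminorm[1,h]{\uwh}\lesssim h^{-1}\seminorm[0,h]{\uwh}$. Substituting this inverse inequality applied to $\uLh\uvh$ yields~\eqref{eq:est.L2.uLh}.

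For the second bound~\eqref{eq:est.H-1.uLh}, I would use a duality argument. Fix $\varphi\in H^1(\Omega)$ (the appropriate test space for $H^{-1}(\Omega)$). Since $\Lh\uvh\in\Poly{k+1}(\Th)$, the $L^2$-orthogonality of $\lproj{k+1}$ gives $(\Lh\uvh,\varphi)=(\Lh\uvh,\lproj{k+1}\varphi)$. Writing the right-hand side in the $(\cdot,\cdot)_{0,h}$ inner product picks up the stabilization: $(\Lh\uvh,\lproj{k+1}\varphi)=(\uLh\uvh,\Ih\varphi)_{0,h}-s_{0,h}(\uLh\uvh,\Ih\varphi)$. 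Using the definition~\eqref{eq:uLh} of $\uLh$ on the first term then yields
\begin{equation*}
  (\Lh\uvh,\varphi) = -a_h(\uvh,\Ih\varphi) - s_{0,h}(\uLh\uvh,\Ih\varphi).
\end{equation*}
The first term is bounded by $\norm[1,h]{\uvh}\norm[H^1(\Omega)]{\varphi}$ using norm equivalence~\eqref{eq:norm1h.ah} and the $H^1$-stability~\eqref{eq:Ih.stab} of $\Ih$.

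The subtle point—and the step most likely to cause trouble—is bounding the stabilization term $s_{0,h}(\uLh\uvh,\Ih\varphi)$ without picking up a bad $h^{-1}$ factor that would ruin the scaling. I would apply Cauchy--Schwarz to get $|s_{0,h}(\uLh\uvh,\Ih\varphi)|\le\seminorm[0,h]{\uLh\uvh}\seminorm[0,h]{\Ih\varphi}$, then use the elementary bound $\seminorm[0,h]{{\cdot}}\le h\seminorm[1,h]{{\cdot}}$ (which follows immediately from $h_F\le h$ applied termwise). This yields $\seminorm[0,h]{\Ih\varphi}\lesssim h\norm[H^1(\Omega)]{\varphi}$ via~\eqref{eq:Ih.stab}, and for the other factor I would invoke the first bound~\eqref{eq:est.L2.uLh} just established, giving $\seminorm[0,h]{\uLh\uvh}\le\norm[0,h]{\uLh\uvh}\lesssim h^{-1}\norm[1,h]{\uvh}$. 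The two factors of $h$ cancel exactly, producing the desired bound $|s_{0,h}(\uLh\uvh,\Ih\varphi)|\lesssim\norm[1,h]{\uvh}\norm[H^1(\Omega)]{\varphi}$. Combining the two contributions and taking the supremum over unit-norm $\varphi$ concludes the proof.
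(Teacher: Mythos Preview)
Your proposal is correct and follows essentially the same route as the paper's own proof: for~\eqref{eq:est.L2.uLh} you test~\eqref{eq:uLh} with $\uLh\uvh$, use the boundedness~\eqref{eq:norm1h.ah} of $a_h$, and close with the inverse inequality $\norm[1,h]{{\cdot}}\lesssim h^{-1}\norm[0,h]{{\cdot}}$; for~\eqref{eq:est.H-1.uLh} you project the test function $\varphi$ onto $\Poly{k+1}(\Th)$, split off the $s_{0,h}$ contribution, and balance the $h^{-1}$ from~\eqref{eq:est.L2.uLh} against the $h$ coming from $\seminorm[0,h]{\Ih\varphi}\le h\seminorm[1,h]{\Ih\varphi}$. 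The only cosmetic difference is that the paper first uses the zero-average property of $\Lh\uvh$ (Remark~\ref{rem:restr.uLh}) to restrict the supremum to $\varphi\in H^1(\Omega)\cap L^2_0(\Omega)$, but since your bound holds for all $\varphi\in H^1(\Omega)$ this is immaterial.
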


\begin{proof}
  \begin{asparaenum}[(i)]
  \item \emph{Proof of~\eqref{eq:est.L2.uLh}.}
    Let $\uvh\in\Uh$.
    Making $\uzh=-\uLh\uvh$ in the definition~\eqref{eq:uLh} of $\uLh$, we have
    $$
    \norm[0,h]{\uLh\uvh}^2
    = -a_h(\uvh,\uLh\uvh)
    \lesssim\norm[1,h]{\uvh}\norm[1,h]{\uLh\uvh}
    \lesssim\norm[1,h]{\uvh}h^{-1}\norm[0,h]{\uLh\uvh},
    $$
    where we have used the continuity of $a_h$ expressed by the second inequality in~\eqref{eq:norm1h.ah} followed by the fact that, for all $\uzh\in\Uh$, $\norm[1,h]{\uzh}\lesssim h^{-1}\norm[0,h]{\uzh}$.
    This inequality follows from the definition~\eqref{eq:norm1h} of the $\norm[1,h]{{\cdot}}$-norm using the inverse inequality~\eqref{eq:inv} to bound the first term and recalling mesh quasi-uniformity~\eqref{eq:qu}.
    
  \item \emph{Proof of~\eqref{eq:est.H-1.uLh}.}
    Let $\uvh\in\UhO$.
    Observing that $\Lh\uvh$ has zero-average on $\Omega$ (cf. Remark~\ref{rem:restr.uLh}), we have
    \begin{equation}\label{eq:est.uLh:1}
      \norm[H^{-1}(\Omega)]{\Lh\uvh}
      = \sup_{\varphi\in H^1(\Omega)\cap L^2_0(\Omega),\norm[H^1(\Omega)]{\varphi}=1}(\Lh\uvh,\varphi).
    \end{equation}
    Let now $\uphi\eqbydef\Ih\varphi$.
    Using the fact that $\Lh\uvh\in\Poly{k+1}(\Th)$ followed by the definitions~\eqref{eq:uLh} of $\uLh$ and~\eqref{eq:prod0h} of $(\cdot,\cdot)_{0,h}$, one has
    $$
    (\Lh\uvh,\varphi)
    =(\Lh\uvh,\lproj{k+1}\varphi)
    = -s_{0,h}(\uLh\uvh,\uphi) - a_h(\uvh,\uphi).
    $$
    Hence, using the Cauchy--Schwarz inequality we get
    $$
    \begin{aligned}
      |(\Lh\uvh,\varphi)|
      &\lesssim\seminorm[0,h]{\Lh\uvh}\seminorm[0,h]{\uphi} + \norm[1,h]{\uvh}\norm[1,h]{\uphi}
      \\
      &\lesssim h^{-1}\norm[1,h]{\uvh} h\seminorm[1,h]{\uphi} + \norm[1,h]{\uvh}\norm[1,h]{\uphi}
      \\
      &\lesssim \norm[1,h]{\uvh}\norm[1,h]{\uphi}
      \lesssim \norm[1,h]{\uvh}\norm[H^1(\Omega)]{\varphi},
    \end{aligned}
    $$
    where we have used the second inequality in~\eqref{eq:norm1h.ah} in the first line,~\eqref{eq:est.L2.uLh} together with the fact that $\seminorm[0,h]{\uzh}\le h\seminorm[1,h]{\uzh}$ for all $\uzh\in\Uh$ to pass to the second line, and the $H^1$-stability~\eqref{eq:Ih.stab} of $\Ih$ to conclude.
    To obtain~\eqref{eq:est.H-1.uLh}, plug the above estimate into the right-hand side of~\eqref{eq:est.uLh:1}.
  \end{asparaenum}
\end{proof}

We introduce the continuous Green's function $\G:L_0^2(\Omega)\to H^1(\Omega)\cap L^2_0(\Omega)$ such that, for all $\varphi\in L_0^2(\Omega)$,
$$
(\GRAD\G \varphi,\GRAD v)=(\varphi,v)\qquad\forall v\in H^1(\Omega).
$$
Owing to elliptic regularity (which holds since $\Omega$ is convex), we have $\G\varphi\in H^2(\Omega)$.
Its discrete counterpart $\uGh:\UhO\to\UhO$ is defined such that, for all $\uphi\in\UhO$,
\begin{equation}
  \label{eq:uGh}
  a_h(\uGh\uphi,\uzh) = (\uphi,\uzh)_{0,h}\qquad\forall\uzh\in\UhO,
\end{equation}
with inner product $(\cdot,\cdot)_{0,h}$ defined by~\eqref{eq:prod0h}.
We will denote by $\Gh\uvh$ (no underline) the broken polynomial function in $\Poly{k+1}(\Th)$ obtained from element DOFs in $\uGh\uvh$.
We next show that $-\uGh$ is the inverse of $\uLh$ restricted to $\UhO\to\UhO$.
Let $\uvh\in\UhO$.
Using~\eqref{eq:uGh} with $\uphi=\uLh\uvh$ followed by~\eqref{eq:uLh}, it is inferred, for all $\uzh\in\UhO$,
$$
a_h(\uGh\uLh\uvh,\uzh)
=(\uLh\uvh,\uzh)_{0,h}
=-a_h(\uvh,\uzh)\implies a_h(\uvh+\uGh\uLh\uvh,\uzh)=0.
$$
Therefore, since  $(\uvh+\uGh\uLh\uvh)\in\UhO$ and $a_h$ is coercive in $\UhO$ (cf.~\eqref{eq:norm1h.ah} and Proposition~\ref{prop:norm1h}), we conclude
\begin{equation}
  \label{eq:zh.uGh}
  \uvh + \uGh\uLh\uvh=\underline{0}\qquad\forall\uvh\in\UhO.
\end{equation}

\begin{proposition}[Estimates for $\uGh$]\label{prop:stab.approx.uGh}
  It holds, for all $\uvh\in\UhO$,
  \begin{equation}
    \label{eq:stab.green}
    \norm[1,h]{\uGh\uvh-\Ih\G v_h}
    \lesssim h\left(\seminorm[0,h]{\uvh} + \norm[H^2(\Omega)]{\G v_h}\right)
    \lesssim h\norm[0,h]{\uvh}.
  \end{equation}
  Moreover, using elliptic regularity, we have
  \begin{equation}
    \label{eq:approx.green}
    \norm{\Gh\uvh-\lproj{k+1}\G v_h}
    \lesssim h^2\left(\seminorm[0,h]{\uvh} + \norm[H^2(\Omega)]{\G v_h}\right)
    \lesssim h^2\norm[0,h]{\uvh}.
  \end{equation}
\end{proposition}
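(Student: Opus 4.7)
First I would establish~\eqref{eq:stab.green}. Set $\underline{e}_h \eqbydef \uGh\uvh - \Ih\G v_h \in \UhO$. Using the definition~\eqref{eq:uGh} of $\uGh$ together with the identity $-\LAPL\G v_h = v_h$ (which holds because $v_h\in L^2_0(\Omega)$ since $\uvh\in\UhO$), I derive the Galerkin-orthogonality identity
\begin{equation*}
a_h(\underline{e}_h, \uzh) = s_{0,h}(\uvh, \uzh) - \mathcal{R}_{\G v_h}(\uzh) \qquad \forall\uzh\in\UhO,
\end{equation*}
where $\mathcal{R}_{\G v_h}(\uzh) \eqbydef a_h(\Ih\G v_h, \uzh) + (\LAPL\G v_h, z_h)$ is the consistency residual of $a_h$ at $v=\G v_h$. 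By~\eqref{eq:cons.ah} applied with $l=2$, $|\mathcal{R}_{\G v_h}(\uzh)| \lesssim h\,\norm[H^2(\Omega)]{\G v_h}\,\norm[1,h]{\uzh}$. Combining this with the elementary inequality $\seminorm[0,h]{\uzh}\le h\,\seminorm[1,h]{\uzh}\le h\,\norm[1,h]{\uzh}$ (immediate consequence of $h_F\le h$), one gets $|s_{0,h}(\uvh,\uzh)| \le \seminorm[0,h]{\uvh}\seminorm[0,h]{\uzh} \lesssim h\,\seminorm[0,h]{\uvh}\,\norm[1,h]{\uzh}$. Setting $\uzh=\underline{e}_h$ and invoking coercivity~\eqref{eq:norm1h.ah} yields the first inequality of~\eqref{eq:stab.green}; the second follows from elliptic regularity $\norm[H^2(\Omega)]{\G v_h}\lesssim\norm{v_h}\le\norm[0,h]{\uvh}$ together with $\seminorm[0,h]{\uvh}\le\norm[0,h]{\uvh}$.

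For~\eqref{eq:approx.green} I would run an Aubin--Nitsche duality argument. Let $\eta_h \eqbydef G_h\uvh - \lproj{k+1}\G v_h$, which has zero mean on $\Omega$ (inherited from $\uGh\uvh\in\UhO$ and $\G v_h\in L^2_0(\Omega)$), and set $\phi\eqbydef\G\eta_h \in H^2(\Omega)\cap L^2_0(\Omega)$, which satisfies $\norm[H^2(\Omega)]{\phi}\lesssim\norm{\eta_h}$ by elliptic regularity on the convex domain $\Omega$. Noting that the broken polynomial associated with $\underline{e}_h$ is precisely $\eta_h$, I write
\begin{equation*}
\norm{\eta_h}^2 = -(\LAPL\phi,\eta_h) = a_h(\Ih\phi,\underline{e}_h) - \mathcal{R}_\phi(\underline{e}_h),
\end{equation*}
and, by symmetry of $a_h$ and the Galerkin identity above,
\begin{equation*}
a_h(\Ih\phi,\underline{e}_h) = s_{0,h}(\uvh,\Ih\phi) - \mathcal{R}_{\G v_h}(\Ih\phi).
\end{equation*}
The consistency residual $\mathcal{R}_\phi(\underline{e}_h)$ is controlled by~\eqref{eq:cons.ah} (with $l=2$) combined with the already-proved~\eqref{eq:stab.green}, giving $|\mathcal{R}_\phi(\underline{e}_h)|\lesssim h\,\norm[H^2(\Omega)]{\phi}\,\norm[1,h]{\underline{e}_h}\lesssim h^2\,\norm{\eta_h}\,(\seminorm[0,h]{\uvh}+\norm[H^2(\Omega)]{\G v_h})$. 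For the stabilization pairing, the face-approximation estimates~\eqref{eq:lproj.approx} of $\lproj[T]{k+1}$ imply $\seminorm[1,h]{\Ih\phi}\lesssim h\,\norm[H^2(\Omega)]{\phi}$, whence $\seminorm[0,h]{\Ih\phi}\le h\,\seminorm[1,h]{\Ih\phi}\lesssim h^2\,\norm{\eta_h}$ and thus $|s_{0,h}(\uvh,\Ih\phi)|\lesssim h^2\,\seminorm[0,h]{\uvh}\,\norm{\eta_h}$.

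The crux of the argument---and the only nontrivial step---is to bound $\mathcal{R}_{\G v_h}(\Ih\phi)$ by $h^2$ rather than merely $h$: a direct application of~\eqref{eq:cons.ah} would only give $h\,\norm[H^2(\Omega)]{\G v_h}\,\norm[1,h]{\Ih\phi}=\mathcal{O}(h)$, which is insufficient. The key observation is that \emph{both} arguments of $a_h$ are now projections of $H^2$ functions. Expanding the definitions~\eqref{eq:pT} of $p_T$ and~\eqref{eq:s1h} of $s_{1,h}$ and integrating by parts element-wise (boundary contributions on interior faces cancel by single-valuedness of the normal trace of $\GRAD\G v_h$, and those on $\partial\Omega$ vanish by the Neumann condition $\partial_{\normal}\G v_h=0$), the residual $\mathcal{R}_{\G v_h}(\Ih\phi)$ recasts as a sum, over mesh elements and their faces, of local pairings of a ``normal-flux residual'' $\partial_{\normal}\G v_h-\GRAD p_T\IT\G v_h\cdot\normal_{TF}$ (of order $h_T^{1/2}\norm[H^2(T)]{\G v_h}$ in $L^2(F)$ by the elliptic-projection property~\eqref{eq:pT.IT.ell.proj} and scaled trace estimates) against the face quantity $z_F-z_T=\lproj[F]{k}\phi-\lproj[T]{k+1}\phi$ (of order $h_T^{3/2}\norm[H^2(T)]{\phi}$ in $L^2(F)$ by~\eqref{eq:lproj.approx}), plus the stabilization pairing $|s_{1,h}(\Ih\G v_h,\Ih\phi)|\le\seminorm[1,h]{\Ih\G v_h}\seminorm[1,h]{\Ih\phi}\lesssim h^2\,\norm[H^2(\Omega)]{\G v_h}\,\norm[H^2(\Omega)]{\phi}$. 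Summing via Cauchy--Schwarz delivers $|\mathcal{R}_{\G v_h}(\Ih\phi)|\lesssim h^2\,\norm[H^2(\Omega)]{\G v_h}\,\norm{\eta_h}$. Assembling all three bounds and dividing by $\norm{\eta_h}$ gives the first inequality of~\eqref{eq:approx.green}; the second follows as in~\eqref{eq:stab.green}.
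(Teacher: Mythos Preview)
Your argument for~\eqref{eq:stab.green} is correct and coincides with the paper's: the same error equation, the same two contributions $s_{0,h}(\uvh,\uzh)$ and the consistency residual, and the same coercivity argument. Likewise, your Aubin--Nitsche setup for~\eqref{eq:approx.green} reproduces exactly the paper's three-term decomposition (your $-\mathcal R_\phi(\underline e_h)$, $-\mathcal R_{\G v_h}(\Ih\phi)$, $s_{0,h}(\uvh,\Ih\phi)$ are the paper's $\term_1$, $\term_2$, $\term_3$), and your bounds on $\mathcal R_\phi(\underline e_h)$ and $s_{0,h}(\uvh,\Ih\phi)$ are correct.

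The one genuine gap is in your bound of the ``symmetric'' consistency term $\mathcal R_{\G v_h}(\Ih\phi)$. You assert that the face quantity $z_F-z_T=\lproj[F]{k}\phi-\lproj[T]{k+1}\phi$ is of order $h_T^{3/2}\norm[H^2(T)]{\phi}$ in $L^2(F)$ ``by~\eqref{eq:lproj.approx}''. This is not what~\eqref{eq:lproj.approx} gives, and it is false for $k=0$: take $\phi$ affine, so that $\lproj[T]{1}\phi=\phi$, $\seminorm[H^2(T)]{\phi}=0$, yet $\lproj[F]{0}\phi-\phi$ is a nonzero linear function on any face not orthogonal to $\GRAD\phi$. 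In general one only has $\norm[F]{z_F-z_T}\lesssim h_T^{1/2}\seminorm[H^1(T)]{\phi}+h_T^{3/2}\seminorm[H^2(T)]{\phi}$, which yields $O(h)$, not $O(h^2)$, after pairing with the flux residual.

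Your face-based route can be repaired, but not by the crude product bound: split $z_F-z_T=(z_F-\lproj[F]{k}z_T)+(\lproj[F]{k}z_T-z_T)$; the first piece equals $\lproj[F]{k}(\phi-\lproj[T]{k+1}\phi)$ and \emph{is} $O(h_T^{3/2})$, while the second is $L^2(F)$-orthogonal to $\Poly{k}(F)$ so that only $-\partial_{\normal_{TF}}\G v_h$ survives against it, and one then exploits $\sum_{T}\sum_{F}(\partial_{\normal_{TF}}\G v_h,\phi)_F=0$ to replace $z_T$ by $z_T-\phi$, recovering $O(h_T^{3/2})$. The paper sidesteps all of this by a cleaner rewriting: using $(v_h,\hzh)=(\GRAD\G v_h,\GRAD z)$ together with the elliptic-projector identity~\eqref{eq:pT.IT.ell.proj}, it recasts the term as $\sum_T(\GRAD(\pT\IT\G v_h-\G v_h),\GRAD(\pT\IT z-z))_T - s_{1,h}(\Ih\G v_h,\Ih z)$, a product of two $O(h)$ volume approximation errors plus a stabilization pairing bounded by $\seminorm[1,h]{\Ih\G v_h}\seminorm[1,h]{\Ih z}\lesssim h^2$, which delivers $O(h^2)$ directly for every $k\ge 0$.
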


\begin{proof}
  Let $\uvh\in\UhO$.
  \begin{asparaenum}[(i)]
  \item \emph{Proof of~\eqref{eq:stab.green}.}
    For all $\uzh\in\UhO$ we have, using the definition~\eqref{eq:uGh} of $\uGh\uvh$ and subtracting the quantity $(v_h+\LAPL\G v_h,z_h)=0$,
    \begin{equation}\label{eq:stab.uGh:1}
      a_h(\uGh\uvh-\Ih\G v_h,\uzh)
      = \underbrace{(\uvh,\uzh)_{0,h} - (v_h,z_h)}_{\term_1}
      \underbrace{-a_h(\Ih\G v_h,\uzh) - (\LAPL\G v_h,z_h)}_{\term_2}.
    \end{equation}
    Recalling the definition~\eqref{eq:prod0h} of the inner product $(\cdot,\cdot)_{0,h}$, one has
    \begin{equation}\label{eq:stab.uGh:T1}
      |\term_1|
      =|s_{0,h}(\uvh,\uzh)|
      \le\seminorm[0,h]{\uvh}\seminorm[0,h]{\uzh}
      \le h\seminorm[0,h]{\uvh}\seminorm[1,h]{\uzh}.
    \end{equation}
    On the other hand, the consistency property~\eqref{eq:cons.ah} of the bilinear form $a_h$ readily yields
    \begin{equation}\label{eq:stab.uGh:T2}
      |\term_2|
      \lesssim h\norm[H^2(\Omega)]{\G v_h}\norm[1,h]{\uzh}.
    \end{equation}
    Making $\uzh=\uGh\uvh-\Ih\G v_h$ in~\eqref{eq:stab.uGh:1}, and using the coercivity of $a_h$ expressed by the first inequality in~\eqref{eq:norm1h.ah} followed by the bounds~\eqref{eq:stab.uGh:T1}--\eqref{eq:stab.uGh:T2}, the first bound in~\eqref{eq:stab.green} follows.
    To prove the second bound in~\eqref{eq:stab.green}, use elliptic regularity to estimate $\norm[H^2(\Omega)]{\G v_h}\lesssim\norm{v_h}$ and recall the definition of the $\norm[0,h]{{\cdot}}$-norm.

  \item \emph{Proof of~\eqref{eq:approx.green}.}
    We follow the ideas of~\cite[Theorem~10]{Di-Pietro.Ern.ea:14} and~\cite[Theorem~11]{Di-Pietro.Ern:15}, to which we refer for further details.
    Set, for the sake of brevity, $\uphi\eqbydef\uGh\uvh-\Ih\G v_h$, and let $z\eqbydef\G\varphi_h$.
    By elliptic regularity, $z\in H^2(\Omega)$ and $\norm[H^2(\Omega)]{z}\lesssim\norm{\varphi_h}$.
    Observing that $-\LAPL z=\varphi_h$, letting $\uhzh\eqbydef\Ih z$, and using the definition~\eqref{eq:uGh} of $\uGh$, we have
    \begin{equation}\label{eq:approx.green:basic}
      \norm{\varphi_h}^2
      = \underbrace{-(\LAPL z,\varphi_h)-a_h(\uphi,\uhzh)}_{\term_1}
      + \underbrace{(v_h,\hzh) - a_h(\Ih\G v_h,\uhzh)}_{\term_2}
      + \underbrace{s_{0,h}(\uvh,\uhzh)}_{\term_3}.
    \end{equation}
    Using the consistency~\eqref{eq:cons.ah} of $a_h$, it is readily inferred for the first term 
     \begin{equation}\label{eq:approx.green:T1}
       |\term_1|\lesssim h\norm[H^2(\Omega)]{z}\norm[1,h]{\uphi}\lesssim
       h^2\left(\seminorm[0,h]{\uvh} + \norm[H^2(\Omega)]{\G v_h}\right)\norm{\varphi_h},
     \end{equation}
     where we have used elliptic regularity to infer $\norm[H^2(\Omega)]{z}\lesssim\norm{\varphi_h}$ and~\eqref{eq:stab.green} to bound $\norm[1,h]{\uphi}$.
     For the second term, upon observing that $(v_h,\hzh)=-(\LAPL\G v_h,z)=(\GRAD\G v_h,\GRAD z)$ since, by definition of, $-\LAPL\G v_h=v_h\in\Poly{k+1}(\Th)$ and $\hzh=\lproj{k+1} z$, recalling the definition~\eqref{eq:ah} of the bilinear form $a_h$ and using the orthogonality property~\eqref{eq:pT.IT.ell.proj} of $(\pT\circ\IT)$, we have
     $$
     \term_2
     = 
     \sum_{T\in\Th} (\GRAD(\pT\IT\G v_h - \G v_h), \GRAD(\pT\uhzh - z))_T
     + s_{1,h}(\Ih\G v_h,\uhzh).
     $$
     By the approximation properties of $(\pT\circ\IT)$ and of $\lproj{k+1}$, and bounding $\norm[H^2(\Omega)]{z}$ and $\norm[1,h]{\uphi}$ as before, we have
     \begin{equation}\label{eq:approx.green:T2}
       |\term_2| \lesssim h^2\left(\seminorm[0,h]{\uvh} + \norm[H^2(\Omega)]{\G v_h}\right)\norm{\varphi_h}.
     \end{equation}
    Finally, for the last term, we write
    \begin{equation}\label{eq:approx.green:T3}
      |\term_3|\le\seminorm[0,h]{\uvh}\seminorm[0,h]{\uhzh}
      \lesssim \seminorm[0,h]{\uvh} h^2\norm[H^2(\Omega)]{z}
      \lesssim h^2\seminorm[0,h]{\uvh}\norm{\varphi_h},
    \end{equation}
    where we have used the Cauchy--Schwarz inequality in the first bound, the approximation properties~\eqref{eq:lproj.approx} of $\lproj{k+1}$ in the second bound, and elliptic regularity to conclude.
    Using~\eqref{eq:approx.green:T1}--\eqref{eq:approx.green:T3} to estimate the right-hand side of~\eqref{eq:approx.green:basic} the first inequality in \eqref{eq:approx.green} follows. Using elliptic regularity to further bound $\norm[H^2(\Omega)]{\G v_h}\lesssim\norm{v_h}$ and recalling the definition of the $\norm[0,h]{{\cdot}}$-norm yields the second inequality in \eqref{eq:approx.green}.
  \end{asparaenum}
\end{proof}

\begin{remark}[Choice of $s_{0,h}$]\label{rem:s0h}
  The choice~\eqref{eq:prod0h} for the stabilisation bilinear form $s_{0,h}$ is crucial to have the right-hand side of~\eqref{eq:approx.green:T3} scaling as $h^2$.
  Penalizing the full difference $(v_F-v_T)$ instead of the lowest-order part $\lproj[F]{k}(v_F-v_T)$ would have lead to a right-hand side only scaling as $h$.
\end{remark}

We are now ready to prove Lemma~\ref{lem:agmon}.

\begin{proof}[Proof of Lemma~\ref{lem:agmon}]
  Let $\uvh\in\UhO$ and set $\uphi\eqbydef\uLh\uvh$.
  Recalling that, owing to~\eqref{eq:zh.uGh}, $v_h=-\Gh\uphi$, it is inferred using the triangle inequality,
  \begin{equation}
    \label{eq:agmon:1}  
    \norm[L^\infty(\Omega)]{v_h}
    \le\norm[L^\infty(\Omega)]{\lproj{k+1}\G\varphi_h} 
    + \norm[L^\infty(\Omega)]{\Gh\uphi-\lproj{k+1}\G\varphi_h}
    \eqbydef\term_1 + \term_2.
  \end{equation}
  The $L^\infty$-stability of $\lproj{k+1}$ (cf.~\eqref{eq:lproj.stab}) followed by the continuous Agmon's inequality readily yields for the first term
  \begin{equation}\label{eq:agmon.T1}
    \term_1\lesssim
    \norm[L^\infty(\Omega)]{\G\varphi_h}
    \lesssim\norm[H^1(\Omega)]{\G\varphi_h}^{\frac12}\norm[H^2(\Omega)]{\G\varphi_h}^{\frac12}.
  \end{equation}
  Using a standard regularity shift (cf., e.g.,~\cite{Grisvard:92}), recalling that $\varphi_h=\Lh\uvh$, and using the $H^{-1}$-bound~\eqref{eq:est.H-1.uLh} for $\Lh\uvh$, we have
  \begin{equation}\label{eq:agmon.est.G}
    \norm[H^1(\Omega)]{\G\varphi_h}\lesssim\norm[H^{-1}(\Omega)]{\varphi_h}\lesssim\norm[1,h]{\uvh},\qquad
    \norm[H^2(\Omega)]{\G\varphi_h}\lesssim\norm{\varphi_h}=\norm{\Lh\uvh},
  \end{equation}
  which plugged into~\eqref{eq:agmon.T1} yields
  \begin{equation}\label{eq:agmon.T1'}
    \term_1\lesssim\norm[1,h]{\uvh}^{\frac12}\norm{\Lh\uvh}^{\frac12}.
  \end{equation}
  For the second term we have, on the other hand,
  \begin{equation}\label{eq:agmon.T2}
    \begin{aligned}  
      \term_2
      &\lesssim h^{-\frac{d}{2}}\norm{\Gh\uphi-\lproj{k+1}\G\varphi_h}
      \\
      &\lesssim h^{\frac{3-d}{2}}(h\norm[0,h]{\uLh\uvh})^{\frac12}\norm[0,h]{\uLh\uvh}^{\frac12}
      \\
      &\lesssim h^{\frac{3-d}{2}}\norm[1,h]{\uvh}^{\frac12}\norm[0,h]{\uLh\uvh}^{\frac12}
      \lesssim \norm[1,h]{\uvh}^{\frac12}\norm[0,h]{\uLh\uvh}^{\frac12},
   \end{aligned}
  \end{equation}
  where we have used the global inverse inequality~\eqref{eq:glob.inv.Linfty} with $p=2$ to obtain the first bound, the estimate \eqref{eq:approx.green} to obtain the second, \eqref{eq:est.L2.uLh} to obtain the third, and the fact that $d\le 3$ together with $h\le h_\Omega\lesssim 1$ (with $h_\Omega$ diameter of $\Omega$) to conclude.
  The conclusion follows plugging~\eqref{eq:agmon.T1'} and~\eqref{eq:agmon.T2} into~\eqref{eq:agmon:1}.
\end{proof}

\begin{remark}[Discrete Agmon's inequality in dimension $d=2$]
  When $d=2$, we have the following sharper form for the discrete Agmon's inequality:
  \begin{equation}\label{eq:agmon:d=2}
    \forall\uvh\in\UhO,\qquad      
    \norm[L^\infty(\Omega)]{v_h}\lesssim\norm[0,h]{\uvh}^{\frac12}\norm[0,h]{\uLh\uvh}^{\frac12}.
  \end{equation}
  To obtain~\eqref{eq:agmon:d=2}, the following modifications are required in the above proof:
  \begin{inparaenum}[(i)]
  \item The term $\term_1$ is bounded as
    $
    \term_1\lesssim\norm{\G\varphi_h}^{\frac12}\norm[H^2(\Omega)]{\G\varphi_h}
    \lesssim\norm{v_h}^{\frac12}\norm{\Lh\uvh}^{\frac12},
    $
    where we have used $v_h=-\G\varphi_h$ (cf.~\eqref{eq:zh.uGh}) for the first factor and~\eqref{eq:agmon.est.G} for the second;
  \item The third line of~\eqref{eq:agmon.T2} becomes $\term_2\lesssim (h\norm[1,h]{\uvh})^{\frac12}\norm[0,h]{\uLh\uvh}^{\frac12}\lesssim\norm[0,h]{\uvh}^{\frac12}\norm[0,h]{\uLh\uvh}^{\frac12}$, where we have used the inverse inequality~\eqref{eq:inv} and mesh quasi-uniformity to bound the first factor.
  \end{inparaenum}
\end{remark}


We next prove the discrete Gagliardo--Nirenberg--Poincar\'{e}'s inequality of Lemma~\ref{lem:gnp}.

\begin{proof}[Proof of Lemma~\ref{lem:gnp}]
  Using the same notation as in the proof of Lemma~\ref{lem:agmon}, we have
  \begin{equation*}\label{eq:gnp:1}
    \norm[L^p(\Omega)^d]{\GRADh v_h}
    \le
    \norm[L^p(\Omega)^d]{\GRADh\lproj{k+1}\G\varphi_h}
    + \norm[L^p(\Omega)^d]{\GRADh(\Gh\uphi-\lproj{k+1}\G\varphi_h)}
    \eqbydef\term_1+\term_2.
  \end{equation*}
  For the first term, we use the $W^{1,p}$-stability of $\lproj{k+1}$ (cf.~\eqref{eq:lproj.stab}) followed by the continuous Gagliardo--Nirenberg--Poincar\'{e}'s inequality~\eqref{eq:gnp.cont}, and~\eqref{eq:agmon.est.G} to infer
  $$
  \term_1
  \lesssim\seminorm[W^{1,p}(\Omega)]{\G\varphi_h}
  \lesssim\seminorm[H^1(\Omega)]{\G\varphi_h}^{1-\alpha}\norm[H^2(\Omega)]{\G\varphi_h}^{\alpha}
  \lesssim\norm[1,h]{\uvh}^{1-\alpha}\norm{\Lh\uvh}^{\alpha}.
  $$ 
  For the second term, on the other hand, we have
  $$
  \begin{aligned}
    \term_2
    &\lesssim h^{d\left(\frac1p-\frac12\right)}\norm{\GRADh(\Gh\uphi-\lproj{k+1}\G\varphi_h)}
    \\
    &\lesssim h^{d\left(\frac1p-\frac12\right)}\norm[1,h]{\uGh\uphi-\Ih\G \varphi_h}^{1-\alpha} \norm[1,h]{\uGh\uphi-\Ih\G \varphi_h}^{\alpha}
    \\
    &\lesssim h^{\alpha+d\left(\frac1p-\frac12\right)}(
      h\norm[0,h]{\uLh\uvh}
    )^{1-\alpha}\norm[0,h]{\uLh\uvh}^{\alpha}
    \\
    &\lesssim h^{\alpha+d\left(\frac1p-\frac12\right)}\norm[1,h]{\uvh}^{1-\alpha}\norm[0,h]{\uLh\uvh}^{\alpha}
    \lesssim \norm[1,h]{\uvh}^{1-\alpha}\norm[0,h]{\uLh\uvh}^{\alpha},
  \end{aligned}
  $$
  where we have used the global reverse Lebesgue inequality~\eqref{eq:glob.inv} in the first line, the definition \eqref{eq:norm1h} of the $\norm[1,h]{{\cdot}}$-norm to pass to the second line, the estimate \eqref{eq:stab.green} to pass to the third line, and~\eqref{eq:est.L2.uLh} to pass to the fourth line.
  To obtain the second inequality in the fourth line, we observe that, recalling the definition~\eqref{eq:gnp.cont} of $\alpha$ and the assumptions on $p$, it holds for the exponent of $h$,
    $$
    \alpha + d\left(\frac1p-\frac12\right)
    = \frac12 - \frac{d}2\left(\frac12-\frac1p\right)\ge 0,
    $$
  and, since $h\le h_\Omega\lesssim 1$, the conclusion follows.
\end{proof}

\begin{remark}[Validity of the discrete Agmon's and Gagliardo--Niremberg--Poincar\'e's inequalities]
  At the discrete level, the fact that the discrete Agmon's inequality~\eqref{eq:agmon} is valid only up to $d=3$ and that the Gagliardo--Nirenberg--Poincar\'e's inequalities~\eqref{eq:gnp} are valid only for $p\in[2,+\infty)$ if $d=2$, $p\in[2,6]$ if $d=3$ is reflected by the need to have nonnegative powers of $h$ in the estimates of the terms $\term_2$ to conclude in the corresponding proofs.
\end{remark}


\bibliographystyle{siamplain}
\bibliography{chho}

\end{document}